\newcommand{\R}{{\mathbb R}}
\newcommand{\N}{{\mathbb N}}
\newcommand{\cA}{{\mathcal A}}
\newcommand{\cH}{{\mathcal H}}
\newcommand{\cD}{{\mathcal D}}
\newcommand{\cP}{{\mathcal P}}
\newcommand{\cM}{{\mathcal M}}
\newcommand{\cS}{{\mathcal S}}
\newcommand{\cJ}{{\mathcal J}}
\newcommand{\sL}{{\mathscr L}}
\newcommand{\sH}{{\mathscr H}}
\newcommand{\sS}{{\mathscr S}}
\newcommand{\e}{\epsilon}
\newcommand{\ve}{\varepsilon}
\newcommand{\al}{\alpha}
\newcommand{\be}{\beta}
\newcommand{\p}{\partial}
\newcommand{\vp}{\varphi}
\newcommand{\supp}{\operatorname{supp}}
\newcommand{\Cut}{\operatorname{Cut}}
\newcommand{\Jac}{\operatorname{Jac}}
\newcommand{\tr}{\operatorname{tr}}
\newcommand{\Ric}{\operatorname{Ric}}
\newcommand{\Sec}{\operatorname{Sec}}
\newcommand{\Sym}{\operatorname{Sym}}
\newcommand{\D}{\nabla}
\newcommand{\ra}{\rightarrow}
\newcommand{\La}{\Delta}
\newcommand{\bs}{\backslash}
\newcommand{\vol}{\operatorname{Vol}}
\newcommand{\diam}{\operatorname{diam}}
\newtheorem{thm}{Theorem}[section]
\newtheorem{lemma}[thm]{Lemma}
\newtheorem{remark}[thm]{Remark}
\newtheorem{prop}[thm]{Proposition}
\newtheorem{definition}[thm]{Definition}
\newtheorem*{notation}{Notation}
\theoremstyle{definition}
\begin{document}
\title[Harnack inequality on Riemannian manifolds]{Parabolic Harnack inequality  of viscosity solutions  on Riemannian manifolds}

\author[Soojung Kim]{Soojung Kim}
\address{Soojung Kim :
National Institue for Mathematical Sciences,
70 Yuseong-daero, 1689 beon-gil, Yuseong-gu, Daejeon, 306-390, Republic of Korea}
\email{soojung26@nims.re.kr,soojung26@gmail.com }

\author[Ki-Ahm Lee]{Ki-Ahm Lee}
\address{Ki-Ahm Lee:
 School of Mathematical Sciences, Seoul National University, 1 Gwanak-ro, Gwanak-gu, Seoul 151-747, Republic of Korea \& Center for Mathematical Challenges, Korea Institute for Advanced Study, Seoul,130-722, Korea}
\email{kiahm@math.snu.ac.kr}


\begin{abstract}
We consider viscosity solutions to   nonlinear uniformly parabolic equations  in nondivergence form  on a  Riemannian manifold $M$,  with the sectional curvature bounded from below by $-\kappa$  
for $\kappa\geq 0$.
In the elliptic case, Wang and Zhang \cite{WZ} recently   extended the results of \cite{Ca}   to nonlinear elliptic equations  in nondivergence form on such $M$, where they obtained the   Harnack inequality for  classical solutions.   We  establish the  Harnack inequality for nonnegative {\it viscosity solutions} to nonlinear uniformly {\it  parabolic equations} in nondivergence form on $M$.
 The Harnack inequality of nonnegative viscosity solutions to the elliptic equations is also proved. 
\end{abstract}
 
\maketitle

\tableofcontents 
\newpage

\section{Introduction and main results}
In this paper, we study the  Harnack inequality  of viscosity solutions to  nonlinear uniformly parabolic equations in nondivergence form on Riemannian manifolds.
Let $(M,g)$ be a smooth, complete Riemannian manifold of dimension $n$.  Consider  a nonlinear uniformly parabolic equation
\begin{equation}\label{eq-main}
F(D^2u) - \p_t u=f \quad\mbox{in}\,\,\,M\times \R,
\end{equation}
where $D^2 u$ denotes the Hessian of the function $u$ defined by
\[
D^2u  \,  (X, Y)= g \left(\nabla_X \nabla u, Y\right),
\]
 for any vector fields $X, Y$ on $M,$ and  $\nabla u$   is the gradient of $u.$  
We notice that in the case, when $F$ is the trace operator,    \eqref{eq-main} is the well-known heat equation with a source term.

In the  setting of elliptic equations on   $M,$    Cabr\'e \cite{Ca}  established  the   Krylov-Safonov type  Harnack inequality of classical solutions to linear, uniformly elliptic equations in nondivergence form, when  $M$ has nonnegative sectional curvature. 
The Krylov-Safonov Harnack inequality is  based on the Aleksandrov-Bakelman-Pucci (ABP) estimate,   which is proved using affine functions in the Euclidean case.  Since  affine functions  can not be generalized into an intrinsic notion on Riemannian  manifolds,   Cabr\'e   considered    the functions of   the  squared distance  instead of the affine functions     to overcome the difficulty.
 Later, Kim \cite{K} improved Cabr\'e's result  removing the sectional curvature assumption and     imposing the certain condition on the distance function  (see \cite[p. 283]{K}). 
Recently, Wang and Zhang \cite{WZ} obtained   a version of the  ABP  estimate on $M$ with Ricci curvature bounded from below,  and the 
  Harnack inequality  of   classical solutions for  nonlinear uniformly elliptic operators provided that  $M$  has    a lower bound of the sectional curvature. 

In the parabolic case, 
 the Krylov-Safonov Harnack inequality 
 was proved in \cite{KKL} 
 for  classical solutions to    linear, uniformly parabolic equations in nondivergence form, assuming essentially the same condition introduced by Kim \cite{K}.  The result in \cite{KKL}, in particular, gives a non-divergent proof of Li-Yau's  Harnack inequality for   the heat equation in a manifold with nonnegative Ricci curvature \cite{LY}.
The ABP-Krylov-Tso  estimate  discovered by Krylov  \cite{Kr} in  the Euclidean case (see also \cite{T,W})  is a parabolic analogue of  the ABP estimate, and  a key ingredient in   proving the   parabolic Harnack inequality.  
In order to prove the ABP-Krylov-Tso type estimate  on Riemannian manifolds
, an intrinsically geometric version of the Krylov-Tso normal map, namely,
\[
\Phi(x,t):=\left(\exp_x \nabla_x u(x,t), -\frac{1}{2}d^2\left(x,\exp_x \nabla u(x,t)\right) - u(x,t)\right)
\]
was introduced. 
The map $\Phi$ is called  the parabolic normal  map related to $u(x,t)$ and the Jacobian determinant of $\Phi$ was explicitly computed in \cite[Lemma 3.1]{KKL}.

In this paper, we shall  prove  the Krylov-Safonov Harnack inequality for a class of  viscosity solutions to uniformly parabolic equations 
 on   $M$ with  the sectional  curvature bounded from below. 
  Let  $\Sym TM$ be the bundle of symmetric 2-tensors over $M.$ A nonlinear operator  $F : \Sym TM\rightarrow \R$ will be always    assumed    in this article to satisfy 
the following basic  hypothesis: 
 \begin{enumerate}
\item[]
   $F$ is uniformly elliptic with the so-called ellipticity constants $0<\lambda\leq\Lambda,$ i.e., for  any $S\in \Sym TM,$ and for any positive semidefinite $P\in\Sym TM,$
 \begin{equation}\label{Hypo1}\tag{H1}
   \lambda\,\mathrm{trace}(P_x)\leq F(S_x+P_x)-F(S_x)\leq \Lambda\, \mathrm{trace}(P_x),\quad\forall x\in M.
 \end{equation}
 \end{enumerate} 
 We may assume that $0<\lambda\leq 1\leq\Lambda.$  In order to   establish  the  uniform Harnack inequality for a    
class of  uniformly parabolic   equations   including \eqref{eq-main},  we  introduce
Pucci's extremal operators as  in \cite{CC}: 
 for any $x\in M,$ and  $S_x\in\Sym TM_x,$
\begin{align*}
\cM^+_{\lambda,\Lambda}(S_x):=\cM^+(S_x)=\lambda\sum_{e_i<0}e_i+\Lambda\sum_{e_i>0}e_i,\\
\cM^-_{\lambda,\Lambda}(S_x):=\cM^-(S_x)=\Lambda\sum_{e_i<0}e_i+\lambda\sum_{e_i>0}e_i,
\end{align*}
where $e_i=e_i(S_x)$ are the eigenvalues of $S_x.$
 In terms of the Pucci operators,   the hypothesis  \eqref{Hypo1} of uniform ellipticity is equivalent to the following: 
 for any $S, P\in\Sym TM,$
  \begin{equation}
  \label{Hypo1'}\tag{H1'}
\cM^-(P_x) \leq F(S_x+P_x)-F(S_x) \leq\cM^+(P_x),\quad\forall x\in M.
 \end{equation}

 Now we recall  viscosity solutions,   which are   proper   weak solutions for  nonlinear  equations in nondivergence form.  In the Euclidean space, the existence, uniqueness    and    regularity theory for the viscosity solutions have been developed by many authors (see for instance,  \cite{CIL, CC, W}).
In  \cite{AFS,Z}, the concept of viscosity solutions  has been naturally extended  on Riemannian manifolds, which can be found in Definitions \ref{def-visc-sol-p} and  \ref{def-visc-sol}. The authors in   \cite{AFS,PZ,Z}  have shown  comparison, uniqueness and existence results for the viscosity solutions on Riemannian manifolds. 
Using  Pucci's extremal operators, 
  we introduce  a      class  of viscosity solutions to the uniformly parabolic equations;  see \cite{CC}. 
 \begin{definition}
Let $\Omega\subset M$  be open,   $T>0,$ and  
  $0<\lambda\leq \Lambda. $  We denote by    $
\overline \cS_P\left(\lambda,\Lambda,f\right)$ a class of a viscosity  supersolution    $u\in C(\Omega\times(0,T])$  satisfying
$$\cM^-(D^2u)-\p_tu\leq f\quad\mbox{in $\Omega\times(0,T]$} $$  in the viscosity sense. 
Similarly,  a class of viscosity subsolutions $\underline\cS_P\left(\lambda,\Lambda,f\right)$ is defined as the set of     $u\in C(\Omega\times(0,T])$ such that 
$$\cM^+(D^2u)-\p_t u\geq f\quad\mbox{in $\Omega\times(0,T]$} $$ in the viscosity sense.  
We   denote $$\cS_P^*\left(\lambda,\Lambda,f\right):=\overline \cS_P\left(\lambda,\Lambda,|f|\right)\cap \underline \cS_P\left(\lambda,\Lambda,-|f|\right).$$ 
Simply, we write $\overline \cS_P(f), $ $\underline \cS_P(f), $ and $  \cS_P^*(f)$ for  $\overline \cS_P\left(\lambda,\Lambda,f\right),$ $\underline \cS_P\left(\lambda,\Lambda,f\right),  $ and $\cS_P^*\left(\lambda,\Lambda,f\right),$   
 respectively. 
\end{definition}  
Note that  the  viscosity  solution to the fully nonlinear uniformly  parabolic equation \eqref{eq-main}   belongs to the class $\cS_P^*\left(\lambda,\Lambda, f-F(0,\cdot)\right)$ owing to the equivalence between \eqref{Hypo1} and   \eqref{Hypo1'}. 

To obtain the  Harnack inequality of viscosity solutions contained  in $\cS_P^*$ from a priori Harnack estimates (Subsection \ref{subsec-PHI-C2}, \cite{KKL}, and \cite{Ca,K,WZ}), we use  regularization by sup and inf-convolutions, introduced by Jensen \cite{J}. 
The classical ABP estimate  for viscosity solutions   was proved by making use of affine functions, especially 
 the convex envelope of the  viscosity solution (see \cite{CC,W}). 
Replacing affine functions by the squared distance functions  on $M,$ as mentioned above,  we    consider  the sup and inf-convolutions on Riemanninan manifolds defined as follows: 
 for $\ve>0,$ let $u_\ve$ denote  the inf-convolution of $u$,     defined as 
  \begin{equation*}
u_{\ve}(x):=\inf_{y\in   \Omega  } \left\{ u(y) +\frac{1}{2\ve}d^2(y, x)\right\}, \qquad\forall x\in\Omega\subset M.
 \end{equation*}
The sup-convolution can be defined in a similar way using concave paraboloids. 
We see that the regularized  functions  by the sup and inf-convolutions   are  semi-convex and semi-concave, respectively, 
 which     imply that they admit the Hessian almost everywhere thanks to  the Aleksandrov theorem  \cite{A, B}.  
     In Lemma \ref{prop-u-u-e-superj-p},   we prove  that  regularized viscosity solutions   solve approximated  equations 
 in the viscosity sense,   provided that  the sectional curvature of $M$ is  bounded from below, and the  operator $F$  is intrinsically uniformly continuous with respect to $x$; see  Definition \ref{def-in-unif-cont}. 
  Intrinsic uniform continuity of  Pucci's operators  is a sufficient condition   for obtaining  the   uniform Harnack  estimates   for viscosity solutions 
  since   a class $\cS^*_P$ of all viscosity solutions to the uniformly parabolic equations 
    is  invariant under the regularization processes of  sup and inf-convolutions.  Then an application of a priori estimates to the sup and inf-convolutions of   viscosity solutions will yield the uniform   Harnack inequality  for    viscosity solutions.

On the other hand,  assuming    the sectional curvature of $M$ to be   bounded from  below, we  establish   a priori   Harnack inequality  for nonlinear  parabolic operators    in Section \ref{subsec-PHI-C2} influenced by  Wang and Zhang \cite{WZ}, who studied the elliptic case. We introduce   the parabolic contact set $\cA_{a,b}$ for $a,b>0$    in Definition \ref{def-contact-p},  which consists of  a point $(\overline x,\overline  t)\in M\times\R,$ where
a concave paraboloid $$\displaystyle-\frac{a}{2}d_y^2(x) + bt + C \,\,\,\mbox{(for some $C$)}$$ touches $u$ from below at $(\overline x,\overline t)$  in a parabolic neighborhood of $(\overline x,\overline t),$ i.e,   $B_r(\overline x)\times(\overline t-r^2,\overline t]$ for some $r>0.$  Under the assumption that  Ricci  curvature of $M$ is  bounded from  below,    an estimation of   the  Jacobian  of the parabolic normal map on the parabolic contact set $\cA_{a,b}$ is  obtained in Lemma \ref{lem-Jac-normal_map}, which 
   is essential   for  proving   the ABP-Krylov-Tso type estimate. 
   For the heat equation on manifolds  with  a lower bound of the Ricci curvature,   we  can  use  Lemma \ref{lem-Jac-normal_map} and   Bishop-Gromov's  volume comparison theorem  to  deduce the Harnack inequality with help of the Laplacian comparison theorem. 
     In particular, this implies    a global Harnack inequality for heat equation on manifolds with nonnegative Ricci curvature  proved   earlier  by Li and Yau    \cite{LY};  see     Remarks \ref{rmk-li-yau} and \ref{rmk-Harnack-const}.  
 Regarding a class of nonlinear   operators, we  establish the (locally) uniform  Harnack inequality for uniformly parabolic operators   provided that the sectional curvature of the underlying manifold is bounded from below, where our computation  does not  rely   on the linearity of the operator as in \cite{WZ}. 

Now  we state our main results as follows. In the statements and hereafter, we denote
\[
\fint_Q f:=\frac{1}{|Q|} \int_Q f
\]
and
\[
K_r(x_0,t_0):=B_r(x_0)\times (t_0-r^2,t_0],\quad (x_0, t_0) \in M \times \R,
\]
where $|Q|$ stands for the volume of a set  $Q$ of $M$ or $M\times\R,$  and $B_r(x_0)$  is a geodesic ball of radius $r$ centered at $x_0.$

 \begin{thm}[Parabolic Harnack inequality]\label{thm-PHI}
  Assume that $M$ has  sectional curvature bounded from below by $-\kappa$ for  $\kappa\geq0,$ i.e., $\Sec \geq -\kappa$ on $M.$
Let  $0<R\leq R_0,$  and $f\in C\left(K_{2R}(x_0,4R^2)\right).$  If $u\in C\left(K_{2R}(x_0,4R^2)\right)$ is  a nonnegative viscosity solution  in $\cS_P^*\left(\lambda,\Lambda, f\right)$   
 in $K_{2R}(x_0,4R^2),$
then  we have
\begin{equation}\label{eq-PHI}
\sup_{K_R(x_0,2R^2)}u\leq C_H\left\{ \inf_{K_R(x_0,4R^2)}u+R^2 \left(\fint_{K_{2R}(x_0,4R^2)} |{f}|^{n\theta +1} \right)^{\frac{1}{n\theta+1}}\right\},
\end{equation} 
where $\theta:=1+\log_2\cosh(8\sqrt{\kappa}R_0)$ and  $C_H>0$  is a uniform constant depending only on $n,\lambda,\Lambda$ and $\sqrt{\kappa}R_0$.
\end{thm}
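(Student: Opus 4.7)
The plan is to reduce the theorem to the a priori parabolic Harnack inequality for almost-everywhere twice-differentiable solutions established in Subsection \ref{subsec-PHI-C2}, via the intrinsic sup- and inf-convolution regularization on $M$. Given $u\in\cS^*_P(\lambda,\Lambda,f)$ in $K_{2R}(x_0,4R^2)$, I would form the inf-convolution
\[
u_\ve(x,t):=\inf_{(y,s)}\left\{u(y,s)+\frac{1}{2\ve}\bigl(d^2(y,x)+(s-t)^2\bigr)\right\}
\]
and its analogue sup-convolution $u^\ve$ on a slightly smaller cylinder. Replacing the Euclidean squared norm by $d^2$ makes $u_\ve$ semi-concave and $u^\ve$ semi-convex, so each admits a pointwise Hessian and time derivative almost everywhere by the Aleksandrov theorem \cite{A,B}. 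The crucial point, furnished by Lemma \ref{prop-u-u-e-superj-p}, is that because $\Sec\ge -\kappa$ and Pucci's operators are intrinsically uniformly continuous (Definition \ref{def-in-unif-cont}), the class $\cS_P^*$ is essentially preserved under these regularizations: one obtains $u_\ve\in\overline\cS_P(\lambda,\Lambda,|f_\ve|)$ and $u^\ve\in\underline\cS_P(\lambda,\Lambda,-|f_\ve|)$ for some $f_\ve$ approximating $f$ in $L^{n\theta+1}$.

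Next I would apply the a priori Harnack estimate of Subsection \ref{subsec-PHI-C2} to $u_\ve$ for the infimum bound and to $u^\ve$ for the supremum bound. That a priori inequality is built from two ingredients: the ABP--Krylov--Tso type estimate obtained by combining the Jacobian bound of the parabolic normal map $\Phi$ on the contact set $\cA_{a,b}$ (Lemma \ref{lem-Jac-normal_map}) with Bishop--Gromov volume comparison, and the classical Krylov--Safonov iteration adapted to a dyadic decomposition by geodesic balls. The factor $\cosh(8\sqrt{\kappa}R_0)$ appearing in $\theta$ arises precisely from the volume comparison in balls on manifolds with $\Sec\ge -\kappa$. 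After letting $\ve\downarrow 0$ and using $u^\ve\downarrow u$ and $u_\ve\uparrow u$ locally uniformly, together with $\|f_\ve\|_{L^{n\theta+1}}\to\|f\|_{L^{n\theta+1}}$, the two one-sided estimates combine into \eqref{eq-PHI}.

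The main obstacle I expect is Lemma \ref{prop-u-u-e-superj-p}, that is, transferring the viscosity sub/super-jet from the minimizer $(y,s)$ back to the base point $(x,t)$. In Euclidean space this is immediate from translation invariance, but on $M$ the two points are joined by a minimizing geodesic and one must compare the second-order jet of $d^2(\cdot,x)$ transported to $y$ with the jet at $x$. The lower sectional-curvature bound is what permits this comparison, via the Hessian comparison theorem for the squared-distance function, and the intrinsic uniform continuity of the Pucci operators is what absorbs the resulting $O(\ve)$ error into $f_\ve$. The a priori Harnack estimate in Subsection \ref{subsec-PHI-C2} is substantial in its own right, but once proved it enters the argument here as a black box applied to the smooth-enough regularizations $u_\ve$ and $u^\ve$.
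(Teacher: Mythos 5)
Your outline matches the strategy announced in the paper's introduction (inf/sup-convolution, class preservation under $\Sec\geq-\kappa$ plus intrinsic uniform continuity of the Pucci operators, a priori estimates, limit in $\ve$), and your identification of Lemma \ref{prop-u-u-e-superj-p} as the crux, with the second variation of energy/Hessian comparison absorbing the curvature error into $f_\ve$, is exactly right. But the way you invoke the a priori input is not what the paper does, and as written it leans on a black box that Subsection \ref{subsec-PHI-C2} does not provide. The paper never proves a standalone a priori Harnack inequality for a.e.\ twice-differentiable functions: Subsection \ref{subsec-PHI-C2} establishes only the one-step measure estimate, Proposition \ref{lem-decay-est-1-step}, and only for \emph{smooth} $u$ (via the ABP--Krylov--Tso Lemma \ref{lem-abp-type}). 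So you cannot feed the merely semi-concave $u_\ve$ (or semi-convex $u^\ve$) into it directly. The paper instead inserts the regularization \emph{inside} the measure estimate: Proposition \ref{lem-decay-est-1-step-viscosity} forms $u_\ve$, uses Lemma \ref{prop-u-u-e-superj-p} to get the a.e.\ inequality $\cM^-(D^2u_\ve)-\p_t u_\ve\leq f_\ve$, then approximates $u_\ve$ by smooth $w_k$ (Lemma \ref{lem-u-e-alexandrov-p}(c)) and applies Proposition \ref{lem-decay-est-1-step} to $w_k$, passing to the limit by dominated convergence; the Calder\'on--Zygmund covering and iteration, with Bishop--Gromov (Theorem \ref{lem-bishop}), are then run on the original viscosity solution $u$, not on the regularized functions.

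A second, related point: a two-sided Harnack estimate cannot be applied to $u_\ve$ or $u^\ve$ individually, since $u_\ve\in\overline\cS_P(|f_\ve|)$ and $u^\ve\in\underline\cS_P(-|f_\ve|)$ only; your plan therefore really requires a weak Harnack inequality for the supersolution half and a local maximum principle for the subsolution half, each valid at the regularized (semi-concave/semi-convex) level. The paper avoids the latter entirely: the sup-convolution is never used in the proof; instead the same decay estimate is applied to $w:=C_1-C_2u$, which is a viscosity supersolution of $\cM^-(D^2w)-\p_t w\leq C_2|f|$ because $u$ is a subsolution for $\cM^+$, and this yields the supremum bound within the same iteration scheme. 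Your route (regularize first, run the full iteration on $u_\ve,u^\ve$, take $\ve\to0$ at the very end, shrinking/expanding cylinders) can be made to work, but only after you supply the smooth approximation step and a local maximum principle for semi-convex subsolutions---which amounts to redoing Proposition \ref{lem-decay-est-1-step-viscosity} in both directions. The paper's ordering buys the convenience of performing the approximation once, at a single scale, and then iterating on $u$ itself with constants manifestly uniform in $\ve$.
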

 
 {\color{black} 
When $\kappa=0,$ \eqref{eq-PHI}  becomes a global   Harnack inequality
which extends the classical Euclidean theory of Krylov and Safonov \cite{KS}. 
Assuming the sectional curvature to be bounded from below,   our Harnack estimate are locally uniform, namely, for a fixed $R_0>0,$ we obtain  uniform Harnack inequalities in any balls of  radius $R$ less than $ R_0.$
When  $n,\lambda$ and $\Lambda$ are given,    
the   uniform   constant $C_H=C_H\left(\sqrt{\kappa} R_0\right)$   in our   estimate \eqref{eq-PHI} 
  grows       faster than  
$\exp\left(1+ {\kappa}R_0^2\right)$ 
as $\sqrt{\kappa }R_0$ tends to infinity;  see Remark \ref{rmk-Harnack-const}. 
}

\begin{thm}[Weak Harnack inequality]\label{thm-weak-PHI}
   Assume that $\Sec \geq -\kappa\,$ on $M$  for  $\kappa\geq0.$
   Let $0<R\leq R_0,$ and  $f\in C\left(K_{2R}(x_0,4R^2)\right).$ If $u\in C\left(K_{2R}(x_0,4R^2)\right)$ is a nonnegative viscosity supersolution    in $\overline\cS_P\left(\lambda,\Lambda, f\right)$ 
   in $K_{2R}(x_0,4R^2),$
then we have
\begin{equation}\label{eq-WHI}
\left(\fint_{K_{R}(x_0,2R^2)} u^p \right)^{\frac{1}{p}}\leq C_H\left\{ \inf_{K_{R}(x_0,4R^2)}u+ R^2 \left(\fint_{K_{2R}(x_0,4R^2)}  |{f^+}|^{n\theta+1} \right)^{\frac{1}{n\theta+1}}\right\};\quad f^+:=\max(f,0),
\end{equation} 
where $\theta:=1+\log_2\cosh(8\sqrt{\kappa}R_0),$  and  the positive constants $p\in (0,1)$ and $C_H$ are uniform  depending only on $n,\lambda,\Lambda,$ and $\sqrt{\kappa}R_0.$ 
\end{thm}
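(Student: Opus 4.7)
I would follow the Krylov--Safonov--Caffarelli--Cabr\'e framework adapted to the Riemannian setting using the tools introduced earlier in the paper. The first step is regularization: replace $u$ by its inf-convolution $u_\varepsilon$ on a slightly smaller parabolic cylinder. By Lemma \ref{prop-u-u-e-superj-p}, $u_\varepsilon$ is nonnegative, semi-concave (hence $D^2 u_\varepsilon$ exists almost everywhere by the Aleksandrov theorem), and remains in $\overline{\mathcal{S}}_P(\lambda,\Lambda, f+o_\varepsilon(1))$. This reduces the theorem to an a priori weak Harnack inequality for twice-differentiable-a.e.\ supersolutions, after which we let $\varepsilon\to 0^+$ and invoke the assumed continuity of $u$.

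For the a priori estimate on $u_\varepsilon$, the plan is to establish the following measure decay: if $\inf_{K_R(x_0,4R^2)} u_\varepsilon \leq 1$ and the normalized norm of $f^+$ is controlled, then
\[
\bigl|\{u_\varepsilon > M^k\}\cap K_R(x_0,2R^2)\bigr|\leq C(1-\mu)^k |K_R(x_0,2R^2)|
\]
for some uniform $M>1$, $\mu\in(0,1)$, and all $k\in\mathbb{N}$. The $L^p$ bound in \eqref{eq-WHI} then follows for $p:=\tfrac12\log(1-\mu)^{-1}/\log M$ by the layer-cake formula. The seed decay at $k=1$ is the ABP-Krylov--Tso estimate: using the parabolic contact set $\mathcal{A}_{a,b}$, the parabolic normal map $\Phi$, and the Jacobian bound of Lemma \ref{lem-Jac-normal_map}, one shows that the set where $u_\varepsilon$ stays bounded by some constant $M$ occupies a definite fraction of any subcube. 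Bishop--Gromov together with the Jacobian estimate produce a volume distortion of order $\cosh^{n-1}(\sqrt\kappa R)$; this is precisely what forces the integrability exponent $n\theta+1$ with $\theta=1+\log_2\cosh(8\sqrt\kappa R_0)$ in the $f$-term. Iteration from $k$ to $k+1$ is done via a Calder\'on--Zygmund type decomposition adapted to the manifold, replacing dyadic cubes by a Whitney covering of $K_R$ by parabolic cylinders $K_{r_j}(y_j,s_j)$ along which the parabolic scaling $r^2\sim t$ is preserved.

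The main obstacle is the Calder\'on--Zygmund iteration on $M$: absent the homogeneity of $\mathbb{R}^n$, one must keep careful track of the doubling constants of geodesic balls through each stopping-time step, and verify that, on the bad set produced by the stopping procedure, the ABP-Krylov--Tso estimate can be re-applied in a scale-invariant way. It is exactly the accumulation of doubling constants under iteration that produces the exponent $n\theta+1$; this is the point where the lower bound $\mathrm{Sec}\geq -\kappa$, as opposed to a Ricci bound, is genuinely used (beyond its role in Lemma \ref{prop-u-u-e-superj-p}, which guarantees that the inf-convolution step is compatible with the class $\overline{\mathcal{S}}_P$). Once this machinery is set up in Subsection \ref{subsec-PHI-C2}, the passage $\varepsilon\to 0^+$ is routine by the stability of viscosity supersolutions under inf-convolution and by Fatou's lemma applied to the layer-cake representation of the $L^p$ norm.
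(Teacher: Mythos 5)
Your plan coincides with the paper's proof in all essentials: inf-convolution regularization via Lemma \ref{prop-u-u-e-superj-p}, the ABP--Krylov--Tso estimate built on the parabolic normal map and the Jacobian bound of Lemma \ref{lem-Jac-normal_map}, a one-step measure estimate, a parabolic Calder\'on--Zygmund iteration using Bishop--Gromov, and the layer-cake formula to produce the $L^p$ bound. The only organizational difference is that the paper passes from the a priori estimate to viscosity supersolutions at the level of the single-step measure estimate (Proposition \ref{lem-decay-est-1-step-viscosity}), letting $\ve\to0$ there, and then runs the covering iteration directly for $u$; your version, which carries $u_\ve$ through the whole iteration and takes $\ve\to0$ at the end, would also work but forces you to track $\ve$-dependence through every stopping-time step.

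Two substantive caveats. First, your ``seed decay'' step glosses over the barrier function: to convert $\inf_{K_{2R}}u\leq 1$ into a measure estimate on a time-displaced cylinder one must apply the ABP--Krylov--Tso estimate to $u+v_\eta$ for a carefully constructed sub/supersolution $v_\eta$ (Lemma \ref{lem-barrier}), itself approximated off the cut locus; this is where most of the quantitative constants arise. Second, and more importantly, you misattribute the role of the hypothesis $\Sec\geq-\kappa$. The Calder\'on--Zygmund iteration and the doubling constants need only $\Ric$ bounded below (Bishop--Gromov), and that alone already produces the exponent $n\theta+1$ via Lemma \ref{lem-weight-int-ellip}. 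The sectional curvature bound is genuinely used elsewhere: in the Hessian comparison for $d_y^2$ (Lemma \ref{lem-hess-dist-sqrd}), which is needed (i) for the semi-concavity of the inf-convolution, (ii) for the barrier, and (iii) --- crucially for nonlinear $F$ --- to pass from the Laplacian $\La u$ appearing in the Jacobian bound of Lemma \ref{lem-Jac-normal_map} to the Pucci operator $\cM^-(D^2u)$ controlled by the equation, via the lower bound $D^2u\geq -\frac{2}{R^2}\sH(2\al_1\sqrt{\kappa}R_0)\,g$ at contact points (inequality \eqref{eq-abp-la-pucci}). If you tried to execute your plan expecting the sectional bound to surface only in the covering argument, you would get stuck precisely at that conversion.
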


In the elliptic setting, we  have  Harnack inequalities for  a  class of viscosity solutions  as below; refer to  Definition \ref{def-visc-sol} for the definitions of   classes $\cS^*_E$ and $ \overline \cS_E$ of  viscosity solutions and supersolutions  to uniformly  elliptic equations.  

\begin{thm}[Elliptic Harnack inequality]\label{thm-EHI}
  Assume that 
$\Sec \geq -\kappa\,$ on $M$  for  $\kappa\geq0.$
     Let $0<R\leq R_0,$  and $f\in C\left(B_{2R}(x_0)\right).$ If $u\in C\left(B_{2R}(x_0)\right)$ is a nonnegative viscosity solution  in $\cS_E^*\left(\lambda,\Lambda, f\right)$ 
      in $B_{2R}(x_0),$ then we have
  \begin{equation*}
  \sup_{B_R(x_0)}u\leq C\left\{\inf_{B_R(x_0)}u+R^2\left(\fint_{B_{2R}(x_0)}|f|^{n\theta}\right)^{\frac{1}{n\theta}}\right\},
  \end{equation*} 
  where $\theta:=1+\log_2\cosh\left(8\sqrt{\kappa}R_0\right)$ and $C>0$ is a uniform constant depending only  on $n,\lambda,\Lambda,$ and $\sqrt{\kappa}R_0.$ 
  \end{thm}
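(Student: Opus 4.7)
The plan is to mirror the proof of Theorem \ref{thm-PHI} in the elliptic, time-independent setting, using sup/inf-convolution regularization to pass from an a priori Harnack estimate for semi-convex/semi-concave functions to one for viscosity solutions.

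First, I would regularize $u$ by the inf-convolution
\[
u_{\ve}(x) := \inf_{y \in B_{2R}(x_0)} \Bigl\{u(y) + \frac{1}{2\ve}d^2(y,x)\Bigr\}
\]
and its sup analog $u^{\ve}$. By the Aleksandrov theorem these functions are semi-convex / semi-concave and admit a Hessian almost everywhere. The elliptic analog of Lemma \ref{prop-u-u-e-superj-p} (which uses the sectional-curvature lower bound $\Sec\ge -\kappa$ together with the intrinsic uniform continuity of the Pucci operators in $x$) then guarantees that $u_{\ve}$ and $u^{\ve}$ still satisfy, in the viscosity and a.e.\ sense, slightly perturbed Pucci inequalities on a slightly shrunken ball, with source terms converging uniformly to $\pm|f|$.

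Next, I would establish an a priori Harnack inequality for Pucci-class functions which are pointwise semi-convex/semi-concave, along the lines of Wang--Zhang \cite{WZ} and Subsection \ref{subsec-PHI-C2}. The core is an elliptic ABP-type estimate on $M$: introduce the elliptic contact set $\cA_a$ where a concave squared-distance paraboloid $-\tfrac{a}{2}d_y^2 + C$ touches $u$ from below, and the elliptic normal map $\Psi(x) := \exp_x \nabla u(x)$. The key ingredient is a sharp estimate of $\Jac \Psi$ on $\cA_a$: using $\Sec\ge -\kappa$ and the Hessian comparison theorem, this Jacobian is bounded by $\det(D^2 u + aI)$ times a volume distortion factor controlled by $\cosh(\sqrt{\kappa}\, d)^{n-1}$ on balls of radius at most $R_0$. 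This is the elliptic counterpart of Lemma \ref{lem-Jac-normal_map} and produces the exponent $\theta = 1 + \log_2\cosh(8\sqrt{\kappa}R_0)$, yielding an ABP-type bound in terms of $\|f\|_{L^{n\theta}(B_{2R})}$. A Calder\'on--Zygmund cube decomposition and the standard Krylov--Safonov iteration then upgrade this to an $L^\e$/weak-Harnack estimate for supersolutions and, combined with the $L^\infty$ estimate for subsolutions, to the full Harnack inequality for semi-convex/semi-concave functions in $\cS^*_E$.

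Finally, applying this a priori estimate to the regularizations $u_{\ve}$ and $u^{\ve}$ and passing to the limit $\ve \to 0$ (using $u_{\ve}\nearrow u$ and $u^{\ve}\searrow u$ locally uniformly, which follows from continuity of $u$) yields the estimate for the original viscosity solution. The main obstacle is the sharp elliptic Jacobian estimate: one must ensure that the volume distortion produced by the exponential map on $\Sec\ge -\kappa$ combines with the contact-set machinery to give exactly the exponent $n\theta$ appearing in the theorem, rather than the weaker $n\theta+1$ that would arise from simply specializing Theorem \ref{thm-PHI} to time-independent solutions, and that the perturbation introduced by the sup/inf-convolution can be absorbed uniformly in $\ve$.
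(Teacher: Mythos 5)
Your proposal follows essentially the same route as the paper: regularize by sup/inf-convolution (using the elliptic analogues of Lemma \ref{prop-u-u-e-superj-p}, namely Proposition \ref{lem-visc-u-u-e} and Lemma \ref{prop-u-u-e-superj}), apply an a priori Wang--Zhang type measure estimate to smooth approximations, and conclude by the Krylov--Safonov covering argument with Bishop--Gromov. The only difference is one of emphasis: the paper simply quotes the a priori contact-set estimate from \cite{WZ} (Proposition \ref{thm-WZ}) rather than re-deriving the elliptic ABP and Jacobian bounds, and you correctly identify that this genuinely elliptic machinery is what yields the exponent $n\theta$ instead of the weaker $n\theta+1$ from the parabolic theorem.
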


\begin{thm}[Weak Harnack inequality]\label{thm-weak-EHI}
   Assume that $\Sec \geq -\kappa\,$ on $M$  for  $\kappa\geq0.$
    Let $0<R\leq R_0,$ and  $f\in C\left(B_{2R}(x_0)\right).$ If $u\in C\left(B_{2R}(x_0)\right)$ is a nonnegative viscosity supersolution  in $\overline\cS_E\left(\lambda,\Lambda, f\right)$  
     in $B_{2R}(x_0),$
then we have
\begin{equation*}
\left(\fint_{B_{R}(x_0)} u^p \right)^{\frac{1}{p}}\leq C\left\{ \inf_{B_{R}(x_0)}u+ R^2 \left(\fint_{B_{2R}(x_0)} |{f^+}|^{n\theta} \right)^{\frac{1}{n\theta}}\right\};\quad f^+:=\max(f,0),
\end{equation*} 
where $\theta:=1+\log_2\cosh(8\sqrt{\kappa}R_0)$  and  the positive constants $p\in (0,1)$ and $C$ are uniform   depending only on $n,\lambda,\Lambda,$ and $\sqrt{\kappa}R_0.$ 
\end{thm}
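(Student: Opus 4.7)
The plan is to mirror the parabolic strategy of Theorem \ref{thm-weak-PHI} in the time-independent setting, so that all contact-set arguments drop the time direction and the ABP exponent falls from $n\theta+1$ to $n\theta$. Throughout, I would work with the elliptic counterparts of the definitions used in the parabolic case: the elliptic contact set $\cA_a$ consisting of points $\overline x \in B_{2R}(x_0)$ at which some concave paraboloid $-\tfrac{a}{2}d_y^2(x) + C$ touches $u$ from below in a Riemannian ball around $\overline x$, and the normal map $x\mapsto \exp_x \nabla u(x)$ (without the time component appearing in \eqref{eq-main}-related $\Phi$).

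The first step is to remove the regularity gap via inf-convolution. Let
\[
u_\ve(x) := \inf_{y\in B_{2R}(x_0)}\left\{ u(y) + \frac{1}{2\ve} d^2(y,x)\right\}.
\]
By the elliptic analogue of Lemma \ref{prop-u-u-e-superj-p}, $u_\ve \in \overline\cS_E(\lambda,\Lambda,f_\ve)$ on a slightly smaller ball, with $f_\ve\to f$ uniformly, provided $\Sec\geq -\kappa$ and the Pucci operators are intrinsically uniformly continuous (which they are). Semi-concavity plus Aleksandrov's theorem then gives $u_\ve$ a pointwise second-order Taylor expansion almost everywhere, so the classical theory applies to $u_\ve$.

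The second step is the elliptic ABP estimate. For $u_\ve$, I would run the argument of Lemma \ref{lem-Jac-normal_map} with the time coordinate suppressed: on $\cA_a$ one has $-aI \leq D^2 u_\ve \leq 0$, so Ricci bounded from below and the uniform ellipticity of $\cM^-$ yield a pointwise Jacobian bound for $\exp_x \nabla u_\ve$ with a $\cosh(8\sqrt{\kappa}R_0)$-type factor. Pairing this with Bishop--Gromov volume comparison on $B_{2R}(x_0)$ produces an ABP-type inequality of the form
\[
\sup_{B_{2R}(x_0)} u_\ve^- \;\leq\; C R^2 \left(\fint_{\{u_\ve = \Gamma_{u_\ve}\}} |f_\ve^+|^{n\theta}\right)^{\!1/(n\theta)},
\]
where $\Gamma_{u_\ve}$ is the lower envelope built from the squared-distance paraboloids and $C$ depends only on $n,\lambda,\Lambda,\sqrt{\kappa}R_0$. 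The exponent $n\theta$, rather than $n\theta+1$, is the only change from the parabolic computation and reflects the absence of the extra time dimension in the volume of the image under the normal map.

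The third step is the standard passage from ABP to weak Harnack: a critical density / barrier argument on nested geodesic balls gives a measure decay $|\{u_\ve > t\}\cap B_R(x_0)|\leq C t^{-p} M^p$ for some uniform $p\in(0,1)$ and $M = \inf_{B_R(x_0)}u_\ve + R^2(\fint|f_\ve^+|^{n\theta})^{1/(n\theta)}$, after which the layer-cake formula yields the $L^p$ estimate on $B_R(x_0)$. Finally, sending $\ve\to 0$ and using lower semicontinuity of $u_\ve\to u$ transfers the bound to the original viscosity supersolution. The main obstacle is the elliptic Jacobian computation on $\cA_a$: one must carefully track how the lower sectional curvature bound $-\kappa$ propagates through Jacobi field comparison and Bishop--Gromov so that the degeneration of the Jacobian is absorbed into the exponent $\theta = 1+\log_2\cosh(8\sqrt{\kappa}R_0)$ and does not contaminate the Harnack constant's dependence, which must remain a function of $\sqrt{\kappa}R_0$ alone.
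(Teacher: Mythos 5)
Your proposal follows essentially the same route as the paper: regularize by inf-convolution (Lemma \ref{prop-u-u-e-superj}), invoke the a priori critical-density/ABP estimate for smooth supersolutions — which the paper simply imports from Wang–Zhang as Proposition \ref{thm-WZ} rather than rederiving the Jacobian bound — and then run the standard covering and layer-cake argument with Bishop–Gromov before letting $\ve\to 0$, exactly as in Lemma \ref{lem-key-EHI} and the concluding sketch. One small correction: on the contact set the touching paraboloid gives only the one-sided bound $D^2u_\ve\ge -a\,\sH\left(\sqrt{\kappa}\,d_y\right)g$, not $D^2u_\ve\le 0$; the upper control needed for the Jacobian of the normal map comes from the equation $\cM^-(D^2u_\ve)\le f_\ve$ via the arithmetic–geometric mean inequality, as in Lemma \ref{lem-WZ-jac}, and the exponent $n\theta$ arises from the volume-doubling normalization of the $L^{n\theta}$ averages (Lemma \ref{lem-weight-int-ellip}) rather than from the image volume itself.
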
 
 
The rest of the paper is organized as follows. 
In Section \ref{sec-pre}, we recall  some results on Riemannian geometry and   viscosity solutions that are used in  the paper.
In Section \ref{sec-supinf-conv}, we investigate basic properties of  the sup and inf-convolutions,  and   the relation between  the viscosity solution and  its  sup and inf-convolutions. 
Section \ref{sec-PHI} is devoted to proving the parabolic Harnack inequalities  of viscosity solutions. 
In Section \ref{sec-EHI}, we prove  Harnack inequalities of  viscosity solutions to the elliptic equations.

\section{Preliminaries}\label{sec-pre}

\subsection{Riemannian geometry}

Let  $(M, g)$ be a smooth, complete Riemannian manifold of dimension $n$,  where $g$ is the  Riemannian metric  and  $\vol:=\vol_g$ is  the Riemannian measure on $M$. We denote $\langle X,Y\rangle:=g(X,Y)$  and $|X|^2:=\langle X,X\rangle$ for $X,Y\in T_xM$,  where $T_x M$ is the tangent space at $x\in M$. Let $d(\cdot,\cdot)$ be the distance function on $M$.  For a given point $y\in M$,   $d_y(x)$  denotes  the distance function to $y$,  i.e., $d_y(x):=d(x,y)$.

We recall  the exponential map $\exp: TM \to  M$.    If $\gamma_{x,X}:\R\to M$ is the geodesic        starting at $x\in M$ with velocity $X\in T_xM$,  then the exponential map  is defined by
$$\exp_x(X):=\gamma_{x,X}(1). $$
We observe that the geodesic $\gamma_{x,X}$ is defined for all time since $M$ is complete.    
 For $X\in T_xM$ with $|X|=1$,   we define the cut time $t_c(X)$ as
 $$t_c(X) := \sup\left\{ t >0 :  \exp_x(sX) \,\,\,\mbox{is minimizing between}\,\,\, x \,\,\,\mbox{and $\exp_x(t X) $}\right\}.$$
The cut locus of $x\in M,$ denoted by $\Cut(x),$ is defined  by
 $$ \Cut(x):= \left\{\exp_x(t_c(X) X)  : X\in T_x M \,\,\,\mbox{with $|X|=1,\,\, t_c(X)<+\infty$}\right\}. $$
If we define 
$$E_x := \left\{t X \in T_xM :  0\leq t<t_c(X),\,\, X\in T_x M \,\,\,\mbox{with $|X|=1$}\right\}\subset T_xM,$$ 
 it can be proved that  
 $\Cut(x)= \exp_x(\p E_x), M=\exp_x(E_x)\cup \Cut(x),$ and $\exp_x : E_x \to \exp_x(E_x)$ is a diffeomorphism.  We note that $\Cut(x)$ is closed and has measure zero.  \textcolor{black}{Given two points $x $ and $y\notin \Cut(x)$, there exists a unique minimizing geodesic $\exp_x(tX)$ (for   $X\in E_x$)  joining  $x$ to $y$ with $y= \exp_x(X),$ and we will write $X= \exp_x^{-1}(y)$.}    
For any $x\notin \Cut(y)\cup\{y\}$, the distance function  $d_y$ is smooth at $x,$ and the Gauss lemma implies that
$$\D d_y(x)=-\frac{\exp_x^{-1} (y)}{|\exp_x^{-1} (y)|},$$
and $$\D( d^2_y/2)(x)=-\exp_x^{-1} (y).$$
The injectivity radius at $x$ of $M$ is defined as 
$$i_M(x):=\sup \{ r>0 : \,\,\mbox{$\exp_x$ is a diffeomorphism from $B_r(0)$ onto $B_r(x)$} \} .$$
We note that
$i_M(x)>0$ for any $x\in M$ and the map $x\mapsto i_M(x)$ is continuous.  

We recall the Hessian of a $C^2$- function $u$ on $M$  defined as 
$$D^2u  \,  (X, Y):=  \left\langle\nabla_X \nabla u, Y\right\rangle,$$
 for any vector fields $X, Y$ on $M,$ where $\D$ denotes the Riemannian connection of $M,$ and   $\nabla u$   is the gradient of $u.$  The Hessian $D^2u$ is a  symmetric 2-tensor in $\Sym TM,$  whose value at $x\in M$ depends only on $u$ and the values $X, Y$ at $x.$   
By a canonical identification of   
  the space of symmetric  bilinear forms on $T_x M$ with the space of symmetric endomorphisms of $T_xM,$  
 the Hessian of  $u$ at $x\in M$  can be also  viewed as a  symmetric endomorphism of $T_xM$: $$
D^2u(x)  \cdot X=  \nabla_X \nabla u,\quad\forall X\in T_xM. 
$$
We will   write $D^2u(x)  \,  (X, X)=\left\langle D^2u (x)\cdot X,\, X\right\rangle$ for $X\in T_xM.$ 

Let $\xi$ be a vector field along a differentiable  curve $\gamma:[0,a]\to M.$ We denote  by $\frac{D \xi}{dt}(t)= \D_{\dot \gamma(t)} \xi(t),$   the covariant derivative  of $\xi$ along $\gamma.$ A vector field $\xi$ along $\gamma$ is said to be parallel along $\gamma$ when $$\frac{D \xi}{dt}(t)\equiv0 \quad\mbox{on}\,\,\,[0,a].$$ 
If $\gamma : [0, 1] \to M$ is  a unique minimizing geodesic  joining $x$ to $y,$ then  
for any $\zeta\in T_x M, $ there exists a unique parallel vector field, denoted by $L_{x,y}\zeta(t),$ along $\gamma$ such that $L_{x,y}\zeta(0)=\zeta.$     
The parallel transport of $\zeta$  from $x$ to $y$ , denoted by $L_{x,y}\zeta,$ is defined as   $$ L_{x,y}\zeta:=L_{x,y}\zeta(1)\in T_yM,$$
 which will  induce    a linear isometry  $L_{x,y}: T_xM \to T_yM.$   
 We note  that $L_{y,x}=L_{x,y}^{-1}$  and 
 \begin{equation}\label{eq-parallel-vector}
\left\langle  L_{x,y} \zeta ,\nu\right\rangle_y=\left\langle \zeta, L_{y,x}\nu\right\rangle_x,\quad\forall\zeta\in T_xM,\,\,\,\nu\in T_yM.
\end{equation}
We also define the parallel transport of  a  symmetric  bilinear form      along the unique minimizing geodesic;  see \cite[p. 311]{AFS}.

\begin{definition}\label{def-PT-hess}
Let $x,y\in M,$ and let $\gamma:[0,1]\to M$ be  a unique minimizing geodesic  joining $x$ to $y.$  For $S_x\in \Sym TM_x,$ 
the parallel transport of $S_x$ from $x$ to $y,$ denoted by $L_{x,y}\circ S_x,$  is a symmetric bilinear form on $T_yM$ satisfying  
$$\left\langle\left( L_{x,y}\circ S_x\right)\cdot\nu,\nu\right\rangle_y:=\left\langle S_x\cdot\left(L_{y,x}\nu\right) , L_{y,x}\nu\right\rangle_x,\qquad\forall\nu\in T_yM.$$
\end{definition}
Identifying the space of symmetric   bilinear forms on $T_y M$ with the space of  symmetric endomorphisms of $T_y M,$   $L_{x,y}\circ S_x $  can be considered as a  symmetric endomorphism  of $T_yM$  such that
$$\left(L_{x,y}\circ S_x\right) \cdot \nu = L_{x,y}\left( S_x \cdot\left( L_{y,x}  \nu\right) \right), \qquad \forall \nu\in T_yM.$$ 
 Then it is not difficult  to check  that $S_x$ and $L_{x,y}\circ S_x$ have the same eigenvalues.

Let the Riemannain curvature tensor be defined by  
$$ R(X, Y)Z = \D_X \D_YZ- \D_Y\D_XZ  -\D_{ [X,Y ]}Z.$$
For 
 two linearly independent  vectors $X,Y\in T_xM,$ we   define the sectional curvature of the
plane determined by $X$ and $Y$ as 
 $$\Sec(X,Y):=\frac{\langle R(X, Y)X,Y\rangle}{|X|^2|Y|^2-\langle X,Y\rangle^2}.$$
 Let   $\Ric$    denote the   Ricci curvature  tensor defined as follows: for a unit vector $X\in T_xM$ and    an orthonormal basis $\{X,e_2,\cdots,e_n\}$ of $T_xM,$ $$\Ric(X,X)=\sum_{j=2}^n \Sec(X,e_j).$$  
 As usual,   $\Ric\geq \kappa$ on $M\,\,\,(\kappa\in\R)$ stands  for   $\Ric_x \geq \kappa g_x $ for all $x\in M.$   
  
We recall the first and second variations of the energy function (see for instance, \cite{D}).  

\begin{lemma}[First and second variations of energy]
Let $\gamma :[0,1]\to M$ be a minimizing geodesic,   and $\xi$ be a vector field  along  $\gamma.$  For small  $\e>0,$ let $h: (-\e,\e)\times[0,1]\to M$ be a  variation of  $\gamma$ defined as 
$$h(r,t):=\exp_{\gamma(t)} r\xi(t) .$$ Define    the energy function of the variation 
$$E(r):=\int_0^1\left|\frac{\p h}{\p t}(r,t)\right|^2dt, \qquad\mbox{for}\,\,\, r\in(-\e,\e).$$ 
Then, we have
\begin{enumerate}[(a)]
\item  $$E(0)=d^2\left(\gamma(0), \gamma(1)\right),$$
\item 
\begin{align*}
\frac{1}{2}E'(0)
&=\left\langle\xi(1), \dot\gamma(1)\right\rangle-\left\langle\xi(0), \dot\gamma(0)\right\rangle,
\end{align*}
\item 
\begin{align*}
\frac{1}{2}E''(0)
&=\int_0^1\left\{\left\langle\frac{D\xi}{dt},\frac{D\xi}{dt}\right\rangle-\left\langle    R\left(\dot\gamma(t),\xi(t)\right)\dot\gamma(t) ,\,\xi(t)\right\rangle \right\}dt.
\end{align*}
\end{enumerate}
\end{lemma}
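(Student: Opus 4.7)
The plan is to differentiate $E(r)$ under the integral, using two standard facts about variations: the symmetry $\frac{D}{dr}\partial_t h = \frac{D}{dt}\partial_r h$ (from torsion-freeness of $\nabla$ together with the commuting coordinate fields $\partial_r,\partial_t$ in the parameter domain), and the pullback curvature identity $\frac{D}{dr}\frac{D}{dt}Z - \frac{D}{dt}\frac{D}{dr}Z = R(\partial_r h, \partial_t h)\,Z$ for any vector field $Z$ along $h$, which follows from the paper's definition of $R$ and the vanishing of $[\partial_r h, \partial_t h]$.

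Part (a) is immediate from the definitions: $h(0,t) = \exp_{\gamma(t)}0 = \gamma(t)$ gives $\partial_t h(0,t) = \dot\gamma(t)$; since $\gamma$ is a geodesic its speed is constant, and minimality identifies that speed with $d(\gamma(0),\gamma(1))$, so $E(0) = |\dot\gamma|^2 = L(\gamma)^2 = d^2(\gamma(0),\gamma(1))$. For part (b), metric compatibility gives $\frac{1}{2}E'(r) = \int_0^1 \langle \frac{D}{dr}\partial_t h,\partial_t h\rangle\,dt$. Applying the variation symmetry and integrating by parts in $t$ yields
\[
\tfrac{1}{2}E'(r) = \langle \partial_r h,\partial_t h\rangle\big|_{t=0}^{t=1} - \int_0^1 \langle \partial_r h, \tfrac{D}{dt}\partial_t h\rangle\,dt.
\]
At $r=0$, the geodesic equation $\frac{D}{dt}\dot\gamma = 0$ annihilates the interior integral, while $\partial_r h(0,\cdot)=\xi$ and $\partial_t h(0,\cdot)=\dot\gamma$ reduce the boundary contribution to the claimed expression.

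For part (c), a second differentiation produces $\frac{1}{2}E''(r) = \int_0^1 \bigl[\langle \frac{D}{dr}\frac{D}{dr}\partial_t h,\partial_t h\rangle + |\frac{D}{dr}\partial_t h|^2\bigr]\,dt$. The variation symmetry turns the kinetic term into $|\frac{D\xi}{dt}|^2$ at $r=0$. For the curvature term, I rewrite $\frac{D}{dr}\frac{D}{dr}\partial_t h = \frac{D}{dr}\frac{D}{dt}\partial_r h$ by the same symmetry and then invoke the commutator identity to replace this by $\frac{D}{dt}\frac{D}{dr}\partial_r h + R(\partial_r h,\partial_t h)\,\partial_r h$. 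The key geometric observation that unlocks the computation is that for each fixed $t$, the curve $r\mapsto h(r,t)=\exp_{\gamma(t)}(r\xi(t))$ is itself a geodesic, so $\frac{D}{dr}\partial_r h \equiv 0$ identically in $(r,t)$; consequently $\frac{D}{dt}\frac{D}{dr}\partial_r h \equiv 0$ as well. Evaluating at $r=0$ leaves only $\langle R(\xi,\dot\gamma)\xi,\dot\gamma\rangle$, which the algebraic symmetries of the Riemann tensor, namely $R(X,Y) = -R(Y,X)$ and $\langle R(X,Y)Z,W\rangle = -\langle R(X,Y)W,Z\rangle$, rewrite as $-\langle R(\dot\gamma,\xi)\dot\gamma,\xi\rangle$, yielding the stated formula.

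The main point needing care is sign-tracking: applying the pullback curvature identity and the algebraic symmetries of $R$ in the form consistent with the paper's convention is the only place an error is likely, everything else being elementary calculus and the standard identities for connections pulled back by variations.
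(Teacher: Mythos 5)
Your overall strategy is the standard one (the paper itself offers no proof of this lemma; it simply cites do Carmo), and parts (a) and (b), as well as the structure of part (c) --- in particular the key observation that $r\mapsto h(r,t)$ is a radial geodesic so that $\frac{D}{dr}\partial_r h\equiv 0$, which kills the usual boundary term --- are all correct. The problem is precisely in the ``sign-tracking'' step you flag at the end. From your (correct, for the paper's displayed convention) commutator identity you arrive at the curvature contribution $\left\langle R(\xi,\dot\gamma)\xi,\dot\gamma\right\rangle$, and you then claim the two antisymmetries $R(X,Y)=-R(Y,X)$ and $\langle R(X,Y)Z,W\rangle=-\langle R(X,Y)W,Z\rangle$ turn this into $-\left\langle R(\dot\gamma,\xi)\dot\gamma,\xi\right\rangle$. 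But you are applying \emph{both} antisymmetries, and the two minus signs cancel:
\begin{equation*}
\left\langle R(\xi,\dot\gamma)\xi,\dot\gamma\right\rangle=-\left\langle R(\dot\gamma,\xi)\xi,\dot\gamma\right\rangle=+\left\langle R(\dot\gamma,\xi)\dot\gamma,\xi\right\rangle .
\end{equation*}
(Indeed the expression $\langle R(X,Y)X,Y\rangle$ is symmetric under swapping $X\leftrightarrow Y$ in all slots.) So your computation, carried out consistently, yields $\frac12 E''(0)=\int_0^1\{|\frac{D\xi}{dt}|^2+\langle R(\dot\gamma,\xi)\dot\gamma,\xi\rangle\}\,dt$, which is the \emph{negative} of the stated curvature term; the step as written does not close.

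The underlying issue is a curvature-convention mismatch that your proof inherits from the paper. The identity $\frac{D}{dr}\frac{D}{dt}Z-\frac{D}{dt}\frac{D}{dr}Z=R(\partial_r h,\partial_t h)Z$ is indeed the one compatible with the displayed definition $R(X,Y)Z=\nabla_X\nabla_YZ-\nabla_Y\nabla_XZ-\nabla_{[X,Y]}Z$; but under that convention $\langle R(X,Y)X,Y\rangle$ is \emph{negative} on the round sphere, so both the paper's formula for $\Sec$ and the statement of part (c) carry the opposite sign from the standard ones. The paper in fact follows do Carmo (whom it cites), whose curvature tensor is the negative of the displayed one; under that convention the commutator identity reads $\frac{D}{dt}\frac{D}{dr}Z-\frac{D}{dr}\frac{D}{dt}Z=R(\partial_r h,\partial_t h)Z$, your computation produces $R(\dot\gamma,\xi)\xi$ instead of $R(\xi,\dot\gamma)\xi$, and a \emph{single} application of the last-slot antisymmetry gives exactly $-\langle R(\dot\gamma,\xi)\dot\gamma,\xi\rangle$ as stated. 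To repair the proof you must commit to one convention throughout: either adopt do Carmo's $R$ (consistent with the paper's $\Sec$ and with the statement) and the corresponding commutator sign, or keep the displayed $R$ and accept that the statement's curvature term should then read $-\langle R(\xi,\dot\gamma)\dot\gamma,\xi\rangle$. As written, your argument mixes the two and manufactures the missing sign in the symmetry step.
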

In particular, if a vector field $\xi$ is parallel   along $\gamma,$ then we have  $\frac{D\xi}{dt}\equiv0$ and $\langle\xi,\dot\gamma\rangle\equiv C$ (for   $C\in\R$ ) on $[0,1].$  In this case,  we  have the following estimate:
\begin{equation}\label{eq-2nd-var-PT}
E(r)=E(0)-r^2\int_0^1\left\langle    R\left(\dot\gamma(t),\xi(t)\right)\dot\gamma(t) ,\,\xi(t)\right\rangle dt+o\left(r^2\right).
\end{equation}

Now, we state some known results on Riemannian manifolds with a lower bound of the curvature. 
First, we have   the following volume  doubling property   assuming Ricci curvature to be  bounded from below (see \cite{V} for instance).  

 \begin{thm}[Bishop-Gromov] \label{lem-bishop}
Assume that   $\Ric 
  \geq -(n-1)\kappa$  on $M$ for $\kappa\geq0.$ 
 For any $0<r < R,$ we have 
  \begin{equation}\label{eq-doubling}
  \frac{\vol(B_{2r}(z))}{\vol(B_r(z))}\leq 2^n\cosh^{n-1}\left(2\sqrt{\kappa}R\right).
  \end{equation}
  \end{thm}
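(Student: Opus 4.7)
The plan is to reduce the doubling estimate to the classical Bishop--Gromov monotonicity theorem and then carry out an elementary computation in the model space. Throughout, let $s_\kappa(t):=\sinh(\sqrt{\kappa}\,t)/\sqrt{\kappa}$ (interpreted as $t$ when $\kappa=0$) and
$$V_\kappa(r):=\omega_{n-1}\int_0^r s_\kappa(t)^{n-1}\,dt,$$
which is the volume of a geodesic ball of radius $r$ in the simply connected space form of constant sectional curvature $-\kappa$, with $\omega_{n-1}$ the area of the unit sphere in $\R^n$.

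First I would establish the classical monotonicity statement: under $\Ric\geq -(n-1)\kappa$, the function $r\mapsto \vol(B_r(z))/V_\kappa(r)$ is non-increasing. Write the Riemannian volume in polar normal coordinates centered at $z$ as $J(t,v)\,dt\,d\sigma(v)$, where $v$ ranges over the unit sphere of $T_zM$, $t\in[0,t_c(v))$, and $J(t,v)$ is the Jacobian density of $\exp_z$. Using the Jacobi/Riccati equation satisfied by the shape operator of the geodesic spheres, together with the trace inequality supplied by the lower Ricci bound, one derives the pointwise comparison
$$\frac{\partial}{\partial t}\log\frac{J(t,v)}{s_\kappa(t)^{n-1}}\leq 0 \qquad \text{for } 0<t<t_c(v).$$
Integrating this ODE inequality in $t$, integrating against $d\sigma(v)$, and using that $\Cut(z)$ has measure zero yields the stated monotonicity. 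This step is the heart of the theorem and the only place where the Ricci hypothesis genuinely enters.

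Given the monotonicity, applying it at radii $r\leq 2r$ immediately gives
$$\frac{\vol(B_{2r}(z))}{\vol(B_r(z))}\leq \frac{V_\kappa(2r)}{V_\kappa(r)}.$$
The remaining task is to bound the right-hand side by $2^n\cosh^{n-1}(2\sqrt{\kappa}R)$. Substituting $t=2u$ in the defining integral for $V_\kappa(2r)$ and using $\sinh(2x)=2\sinh(x)\cosh(x)$ one gets
$$V_\kappa(2r)=2^n\omega_{n-1}\int_0^r s_\kappa(u)^{n-1}\cosh^{n-1}(\sqrt{\kappa}\,u)\,du.$$
For $0\leq u\leq r\leq R$, monotonicity of $\cosh$ yields $\cosh(\sqrt{\kappa}\,u)\leq \cosh(2\sqrt{\kappa}R)$, so $V_\kappa(2r)\leq 2^n\cosh^{n-1}(2\sqrt{\kappa}R)\,V_\kappa(r)$, which combines with the previous display to conclude.

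The main obstacle is the Jacobian/Riccati comparison that underlies the monotonicity; once this is in hand, the rest is a one-variable bookkeeping in the model space.
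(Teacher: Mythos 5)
Your proof is correct. The paper does not prove this statement at all --- it quotes the Bishop--Gromov theorem from the literature (citing [V]) --- and your argument is exactly the standard one that reference supplies: the Riccati/Jacobian comparison giving monotonicity of $r\mapsto\vol(B_r(z))/V_\kappa(r)$, followed by the substitution $t=2u$ and the identity $\sinh(2x)=2\sinh(x)\cosh(x)$, which correctly produces the factor $2^n\cosh^{n-1}$ and the bound $\cosh(\sqrt{\kappa}\,u)\leq\cosh(2\sqrt{\kappa}R)$ for $u\leq r<R$.
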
 
We observe  that the doubling property \eqref{eq-doubling}  implies that for any $0<r< R< R_0,$  
  \begin{equation*}
  \frac{\vol(B_{R}(z))}{\vol(B_r(z))}\leq  \cD\, \left(\frac{R}{r}\right)^{\log_2\cD},
  \end{equation*}
  where    $\cD:= 2^n\cosh^{n-1}\left(2\sqrt{ \kappa}R_0\right)$  is the so-called doubling constant. 
     Using the volume doubling property, it is easy to prove the following lemma.

\begin{lemma}\label{lem-weight-int-ellip}
Assume that   for any  $z\in M$ and $0 <r < 2R_0,$ there exists   a doubling constant $\cD>0$ such that
$$\vol(B_{2r}(z))\leq \cD \vol(B_r(z)).$$
Then we have that  for any $B_{r}(y)\subset B_R(z)$ with $0<r <  R <R_0, $  
\begin{equation}\label{eq-weight-int-ellip}
\left\{\fint_{B_r(y)}\left|r^2f\right|^{n\theta}\right\}^{\frac{1}{n\theta}}\leq  2 \left\{\fint_{B_R(z)}\left|R^2f\right|^{n\theta}\right\}^{\frac{1}{n\theta}};\qquad \theta:= \frac{1}{n}\log_2\cD.
\end{equation} 
In particular, if  the sectional curvature of $M$ is bounded from below by $-\kappa$ ($\kappa\geq0$),    then \eqref{eq-weight-int-ellip} holds with $\theta := 1+\log_2 \cosh(4\sqrt{\kappa}R_0). $ 
\end{lemma}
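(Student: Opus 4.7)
The plan is to convert the weighted-average inequality into a pure volume-ratio estimate and then extract the latter from iterated doubling. First I would unpack the averages: since $B_r(y)\subset B_R(z)$ and $r\le R$,
\[
\fint_{B_r(y)}\left|r^2 f\right|^{n\theta}
 =\frac{r^{2n\theta}}{\vol(B_r(y))}\int_{B_r(y)}|f|^{n\theta}
 \le\left(\frac{r}{R}\right)^{2n\theta}\frac{\vol(B_R(z))}{\vol(B_r(y))}\fint_{B_R(z)}\left|R^2 f\right|^{n\theta},
\]
so the claim reduces to showing that the prefactor $(r/R)^{2n\theta}\,\vol(B_R(z))/\vol(B_r(y))$ is bounded by $2^{n\theta}=\cD$.

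Next I would replace $B_R(z)$ by a ball centered at $y$. From $B_r(y)\subset B_R(z)$ the triangle inequality forces $d(y,z)\le R-r$, so $B_R(z)\subset B_{2R-r}(y)\subset B_{2R}(y)$. I would then split into cases. If $r\le R/2$, iterated doubling with $k=\lceil\log_2(2R/r)\rceil\le 2+\log_2(R/r)$ steps---each intermediate radius $2^{j}r\le 2^{k-1}r<2R\le 2R_0$ lying inside the validity range of the hypothesis---yields
\[
\frac{\vol(B_{2R}(y))}{\vol(B_r(y))}\le\cD^{\,k}\le \cD^{2}\left(\frac{R}{r}\right)^{n\theta},
\]
using $\cD=2^{n\theta}$. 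Then the prefactor becomes $\cD^{2}(r/R)^{n\theta}\le\cD^{2}\cdot 2^{-n\theta}=\cD$. If $r>R/2$, two sub-cases arise: for $r\ge 2R/3$ one has $2R-r\le 2r$, so $B_R(z)\subset B_{2r}(y)$ and a single doubling gives $\vol(B_R(z))/\vol(B_r(y))\le\cD$; for $R/2<r<2R/3$ one has $2R-r\le 4r$, so $B_R(z)\subset B_{4r}(y)$ and two doublings yield $\vol(B_R(z))/\vol(B_r(y))\le\cD^{2}$, while the factor $(r/R)^{2n\theta}\le(2/3)^{2n\theta}\le 2^{-n\theta}=1/\cD$ absorbs the extra power of $\cD$. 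In every case the prefactor is $\le 2^{n\theta}$, and taking $1/(n\theta)$-th roots delivers \eqref{eq-weight-int-ellip} with the stated constant $2$.

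For the sectional-curvature consequence, $\Sec\ge-\kappa$ forces $\Ric\ge-(n-1)\kappa$, so Theorem~\ref{lem-bishop} applied with parameter $R:=2R_0$ furnishes the uniform doubling constant $\cD=2^{n}\cosh^{n-1}(4\sqrt{\kappa}R_0)$ valid for all $0<r<2R_0$. A direct computation gives $\tfrac{1}{n}\log_2\cD=1+\tfrac{n-1}{n}\log_2\cosh(4\sqrt{\kappa}R_0)\le 1+\log_2\cosh(4\sqrt{\kappa}R_0)=\theta$, and since the iteration bound $\vol(B_b(y))\le\cD(b/a)^{n\theta}\vol(B_a(y))$ only weakens as $\theta$ grows, the first part applies verbatim with this admissible exponent.

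The main obstacle is purely arithmetic: because the rounding up of $k$ in the dyadic iteration inherently costs a factor of $\cD$, the sharp constant $2$ is obtained only after the case split $r\le R/2$ versus $r>R/2$ described above, rather than from a single uniform estimate. Everything else is routine geometric bookkeeping.
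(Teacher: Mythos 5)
Your overall strategy -- reduce \eqref{eq-weight-int-ellip} to the volume-ratio bound $\left(\tfrac{r}{R}\right)^{2n\theta}\vol(B_R(z))/\vol(B_r(y))\leq\cD$ and extract that from iterated doubling -- is exactly the route the paper intends (it gives no proof, but the displayed consequence $\vol(B_R(z))/\vol(B_r(z))\leq\cD(R/r)^{\log_2\cD}$ stated just before the lemma is the same device), and your cases $r\leq R/2$ and $R/2<r<2R/3$ are correct as written, since there you only use the containment $B_R(z)\subset B_{2R}(y)$, which follows from $y\in B_r(y)\subset B_R(z)$, hence $d(y,z)<R$. The treatment of the exponent in the curvature case is also fine.

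The gap is the claim that $B_r(y)\subset B_R(z)$ ``forces $d(y,z)\leq R-r$'' by the triangle inequality. This is a Euclidean fact, not a Riemannian one: to prove it you would extend the minimizing geodesic from $z$ through $y$ by length almost $r$ and argue that the endpoint is at distance almost $d(y,z)+r$ from $z$, but the extended geodesic need not remain minimizing. On any compact $M$ (say $S^n$, which satisfies all the hypotheses with $\kappa=0$), take $R$ slightly larger than the diameter, so that $B_R(z)=M\supset B_r(y)$ for every $y$ and every $r<R$, while $d(y,z)$ can be as large as the diameter, far exceeding $R-r$. This false step is load-bearing precisely in your sub-case $r\geq 2R/3$: without it you only get $B_R(z)\subset B_{2R}(y)\subset B_{4r}(y)$, hence two doublings and the prefactor $(r/R)^{2n\theta}\cD^2$, which is $\leq\cD$ only when $(r/R)^2\leq 1/2$. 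So for $r\in(R/\sqrt{2},R)$ your argument, corrected, yields the constant $4$ rather than $2$. This is harmless for every use of the lemma in the paper (only the existence of a uniform constant matters, cf.\ \eqref{eq-weight-int-para} and the proofs of Propositions \ref{lem-decay-est-1-step} and \ref{lem-decay-est-1-step-viscosity}), but as a proof of the statement with the constant $2$ it is incomplete; to recover $2$ in that regime you would need either a genuinely sharper volume comparison (e.g.\ the full Bishop--Gromov monotonicity rather than bare doubling) or to renounce the claim $d(y,z)\leq R-r$ and argue differently.
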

In the parabolic setting, 
it follows from Lemma \ref{lem-weight-int-ellip}  that for any $0<r <  R <R_0, $ 
\begin{equation}\label{eq-weight-int-para}
\left\{\fint_{K_{r,\,\al r^2}(y,s)}\left|r^2f\right|^{n\theta+1}\right\}^{\frac{1}{n\theta+1}}\leq  2 \al^{-\frac{1}{n\theta+1}} \left\{\fint_{K_R(z,t)}\left|R^2f\right|^{n\theta+1}\right\}^{\frac{1}{n\theta+1}},
\end{equation} 
where $K_{r,\al r^2}(y,s):= B_r(y)\times(s-\al r^2,s]\subset K_R(z,t)=B_R(z)\times(t-R^2,t]$ for $\al>0.$

We recall   semi-concavity  of functions on  Riemannian manifolds which is a natural generalization of concavity.   
The work of Bangert \cite{B} concerning  semi-concave functions enables us to deal with    functions that  are not twice differentiable in the usual sense.

  
  

\begin{definition}
Let $\Omega$ be an open set of $M.$  A function $\phi:\Omega\to\R$ is said to be semi-concave at $x_0\in\Omega$  if  there exist a geodesically convex  ball $B_r(x_0)$  with  $0<r<i_M(x_0),$ and a smooth function 
  $ \Psi : B_r(x_0) \to \R$ such that  $\phi+\Psi$ is geodesically concave on  $B_r(x_0).$ A function $\phi$ is semi-concave on $\Omega$ if it is semi-concave at each point in $\Omega.$
\end{definition}

The following   local characterization of semi-concavity is quoted  from \cite[Lemma 3.11]{CMS}. 
\begin{lemma}\label{lem-semiconcave} 
 Let $\phi:\Omega\to\R$ be a continuous function and let  $x_0\in \Omega,$ where $\Omega\subset M$ is open.    
 Assume that there exist a neighborhood $U$ of $x_0,$ and a  constant $C>0$ such that for any $x \in U$ and $X \in T_x M$ with $|X|=1,$  
$$\limsup_{r\to 0}\frac{\phi\left(\exp_x rX \right)+\phi\left(\exp_x -rX \right)-2\phi(x)}{r^2}\leq C. $$
Then $\phi$ is semi-concave at $x_0.$
\end{lemma}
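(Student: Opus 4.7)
The plan is to construct a smooth function $\Psi$ on a small geodesically convex ball $B$ around $x_0$ so that $\phi+\Psi$ is geodesically concave on $B$; the natural candidate is
\begin{equation*}
\Psi(x):=-\tfrac{A}{2}\,d^2(x_0,x),\qquad A:=2(C+1),
\end{equation*}
which is smooth as long as one stays strictly inside the injectivity radius of $x_0$. First I would fix $\rho>0$ with $\rho<i_M(x_0)$, $B:=B_\rho(x_0)$ geodesically convex, and $B\subset U$, and shrink $\rho$ further so that the standard identity $D^2\bigl(\tfrac12 d^2(x_0,\cdot)\bigr)(x_0)=g_{x_0}$, together with continuity of the Hessian of the smooth function $\tfrac12 d^2(x_0,\cdot)$, yields $D^2\bigl(\tfrac12 d^2(x_0,\cdot)\bigr)(x)\geq \tfrac12\, g_x$ throughout $B$. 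This produces the pointwise bound $D^2\Psi(x)(X,X)\leq -(C+1)|X|^2$ for all $x\in B$ and $X\in T_xM$.

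Second, I would combine this Hessian bound on $\Psi$ with the hypothesis on $\phi$ at the level of second symmetric differences. For each $x\in B$ and each unit $X\in T_xM$, the geodesic $t\mapsto\exp_x(tX)$ stays in $B$ for $|t|$ small by geodesic convexity, and Taylor-expanding the smooth $\Psi$ along it gives
\begin{equation*}
\lim_{r\to 0}\frac{\Psi(\exp_x rX)+\Psi(\exp_x(-rX))-2\Psi(x)}{r^2}=D^2\Psi(x)(X,X)\leq -(C+1).
\end{equation*}
Adding this to the assumed upper bound on the symmetric second difference of $\phi$ yields, for $\widetilde\phi:=\phi+\Psi$,
\begin{equation*}
\limsup_{r\to 0}\frac{\widetilde\phi(\exp_x rX)+\widetilde\phi(\exp_x(-rX))-2\widetilde\phi(x)}{r^2}\leq -1
\end{equation*}
uniformly over $x\in B$ and unit $X\in T_xM$.

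Finally, I would upgrade this uniform symmetric-second-difference bound into honest geodesic concavity of $\widetilde\phi$ on $B$. Since $B$ is geodesically convex, it suffices to show that $f(t):=\widetilde\phi(\gamma(t))$ is concave on its domain for every arc-length parametrized minimizing geodesic $\gamma$ in $B$, and this reduces to the elementary one-dimensional fact that a continuous function on an interval whose upper symmetric second derivative is everywhere nonpositive must be concave. I would establish this via the standard perturbation argument: for any $\varepsilon>0$, $f(t)-\varepsilon t^2$ has strictly negative upper symmetric second derivative, so $f(t)-\varepsilon t^2-\ell(t)$ cannot attain a strictly negative interior minimum for any affine $\ell$ without contradicting that strict inequality; hence $f(t)-\varepsilon t^2$ lies above each of its chords, and sending $\varepsilon\downarrow 0$ gives concavity of $f$. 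The only point requiring real care here is this final step: the hypothesis controls only a one-sided symmetric difference rather than a distributional Hessian of $\phi$, so continuity of $\widetilde\phi$ enters essentially through the minimum-principle perturbation above. Everything else is local Riemannian-geometric routine and uses no curvature bound whatsoever, consistent with the full generality of the statement.
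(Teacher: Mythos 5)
The paper does not prove this lemma at all: it is quoted verbatim from \cite[Lemma 3.11]{CMS}, so there is no in-paper argument to compare against. Your proof is correct and is essentially the standard one. The three ingredients all check out: (i) $\tfrac12 d^2(x_0,\cdot)$ is smooth on a ball strictly inside the injectivity radius with Hessian equal to $g$ at $x_0$, so by continuity its Hessian is bounded below by $\tfrac12 g$ on a smaller ball, giving $D^2\Psi\leq -(C+1)g$ there without any curvature hypothesis; (ii) the symmetric second difference of the smooth $\Psi$ along $t\mapsto\exp_x(tX)$ converges to $D^2\Psi(x)(X,X)$, so the limsup for $\widetilde\phi=\phi+\Psi$ is at most $-1$ uniformly; (iii) the reduction of geodesic concavity on a convex ball to concavity along unit-speed geodesics, combined with the classical fact that a continuous function on an interval with nonpositive upper symmetric second derivative is concave, closes the argument. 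You correctly identified (iii) as the only delicate point; note that since your bound is already strictly negative ($\leq -1$), the $\varepsilon$-perturbation is not even needed — the interior minimum of $f-\ell$ below its boundary values directly contradicts the strict negativity, because an interior minimum forces the symmetric second difference to be nonnegative for all small $r$. The $\varepsilon$ device is harmless but redundant here.
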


Hessian bound for the squared distance function is the following lemma which is proved  in \cite[Lemma 3.12]{CMS}  using the formula for the second variation of energy.  According to the local characterization of semi-concavity combined with Lemma \ref{lem-hess-dist-sqrd},     $d_y^2$ is semi-concave  on  a bounded open set $\Omega\subset M$ for any $y\in M,$ provided that the sectional curvature of $M$ is bounded from below. 

 \begin{lemma}\label{lem-hess-dist-sqrd}
 Let $x, y\in M.$  If $\Sec
  \geq -\kappa\,\, ( \kappa\geq0)$ along a minimizing geodesic  joining $x$ to $y,$ then for any $X\in T_xM$ with $|X|=1,$
  $$\limsup_{r\to0}\frac{d_y^2\left(\exp_xrX\right)+d_y^2\left(\exp_x -rX\right)-2d^2_y(x)}{r^2}\leq 2\sqrt{\kappa}d_y(x)\coth\left(\sqrt{\kappa}d_y(x)\right).$$
  \end{lemma}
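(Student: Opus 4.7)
The plan is to apply the second variation of energy formula along a cleverly chosen one-parameter family of curves whose endpoints are $\exp_x(\pm rX)$ and $y$. Fix a minimizing geodesic $\gamma:[0,1]\to M$ from $x$ to $y$ along which $\Sec\geq-\kappa$, parametrized so that $|\dot\gamma(t)|\equiv L:=d_y(x)$. Let $E_1(t)$ denote the parallel transport of $\dot\gamma(0)/L$ along $\gamma$ (so $E_1(t)=\dot\gamma(t)/L$), decompose $X=aE_1(0)+bE_2(0)$ where $E_2(0)$ is a unit vector orthogonal to $\dot\gamma(0)$ and $a^2+b^2=1$, and let $E_2$ be its parallel transport along $\gamma$. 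Introduce the variation field
\[
\xi(t):=a(1-t)E_1(t)+b\phi(t)E_2(t),\qquad \phi(t):=\frac{\sinh\bigl(\sqrt\kappa L(1-t)\bigr)}{\sinh(\sqrt\kappa L)},
\]
(interpreted as $\phi(t)=1-t$ when $\kappa=0$) so that $\xi(0)=X$ and $\xi(1)=0$. Set $h(r,t):=\exp_{\gamma(t)}(r\xi(t))$ and let $E(r)$ denote the associated energy.

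Since $t\mapsto h(r,t)$ joins $\exp_x(rX)$ to $y$, one has $d_y^2(\exp_x(rX))\leq E(r)$ for small $r$, and similarly $d_y^2(\exp_x(-rX))\leq E(-r)$, with equality $E(0)=L^2=d_y^2(x)$. Because $h$ is smooth, Taylor expansion yields $E(r)+E(-r)-2E(0)=E''(0)r^2+o(r^2)$ (the odd-order terms cancel), and consequently
\[
\limsup_{r\to 0}\frac{d_y^2(\exp_x rX)+d_y^2(\exp_x-rX)-2d_y^2(x)}{r^2}\leq E''(0).
\]
It therefore suffices to show $E''(0)\leq 2\sqrt\kappa L\coth(\sqrt\kappa L)$.

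Applying the second variation formula, using that $E_1,E_2$ are parallel, that $R(E_1,E_1)E_1=0$ and $\langle R(E_1,E_2)E_1,E_1\rangle=0$ by the symmetries of the curvature tensor, and that $\langle R(E_1,E_2)E_1,E_2\rangle=\Sec(E_1,E_2)\geq-\kappa$, I obtain
\[
\tfrac12E''(0)=\int_0^1\left\{|D\xi/dt|^2-L^2\langle R(E_1,\xi)E_1,\xi\rangle\right\}dt\leq\int_0^1\left\{a^2+b^2(\phi')^2+\kappa L^2b^2\phi^2\right\}dt.
\]
Since $\phi$ solves the Jacobi-type equation $\phi''=\kappa L^2\phi$ with boundary values $\phi(0)=1,\phi(1)=0$, integration by parts gives
\[
\int_0^1\bigl[(\phi')^2+\kappa L^2\phi^2\bigr]dt=-\phi(0)\phi'(0)=\sqrt\kappa L\coth(\sqrt\kappa L).
\]
Combined with $a^2+b^2=1$ and the elementary inequality $\sqrt\kappa L\coth(\sqrt\kappa L)\geq 1$, this yields $\tfrac12E''(0)\leq\sqrt\kappa L\coth(\sqrt\kappa L)$, which is exactly the stated bound.

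The main delicate piece is the choice of the transverse profile $\phi$: one must recognize that the right comparison function is the Jacobi field of the model space of constant sectional curvature $-\kappa$ with the prescribed Dirichlet data, which is precisely what makes the sharp factor $\sqrt\kappa L\coth(\sqrt\kappa L)$ appear. A minor subtlety is that $x$ may lie on $\Cut(y)$, but this is harmless since the argument uses only the existence (not uniqueness) of a single minimizing geodesic along which the curvature hypothesis is imposed, and $E(r)$ is used only as an upper bound for $d_y^2(h(r,0))$; the tangential direction is handled by the trivial linear profile $a(1-t)$.
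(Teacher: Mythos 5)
Your proof is correct, and it follows exactly the route the paper indicates for this lemma (which it does not prove itself but attributes to \cite[Lemma 3.12]{CMS}): a second-variation-of-energy argument with the transverse profile taken to be the Jacobi field $\sinh(\sqrt{\kappa}L(1-t))/\sinh(\sqrt{\kappa}L)$ of the constant-curvature model, plus the linear profile in the tangential direction. The computation of $E''(0)$, the integration by parts yielding $-\phi(0)\phi'(0)=\sqrt{\kappa}L\coth(\sqrt{\kappa}L)$, and the remark that only existence (not uniqueness) of a minimizing geodesic is needed are all sound.
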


The following  result  from Bangert is an extension of Aleksandrov's second differentiability theorem  that a convex function has second derivatives almost everywhere in the Euclidean space \cite{A} (see also \cite[Chapter 14]{V}) .  
\begin{thm}[Aleksandrov-Bangert, \cite{B}]\label{thm-AB}
Let $\Omega\subset M$ be an open set  and let $\phi:\Omega\to \R$   be semi-concave. Then  for almost every $x\in\Omega, $ $\phi$ is differentiable at  $x,$ and   there exists a symmetric operator $A(x):T_xM\to T_xM$ 
characterized by any one of the two equivalent properties:
\begin{enumerate}[(a)]
\item  for   $\xi\in T_xM,\,\,\,$ $A(x)\cdot\xi=\D_\xi\D\phi(x),$ \\
\item
$\phi(\exp_x \xi)=\phi(x)+\left\langle\D\phi(x),\xi\right\rangle+\frac{1}{2}\left\langle A(x)\cdot \xi,\xi\right\rangle+o\left(|\xi|^2\right)\quad\mbox{ as $\xi\to0.$}
$
\end{enumerate}
The operator $A(x)$   and its associated  symmetric bilinear from  on $T_xM$  are  denoted by $D^2\phi(x)$ and  called the Hessian of $\phi$ at $x$ when   no confusion is possible. 
\end{thm}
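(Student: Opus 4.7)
The plan is to reduce to the classical Euclidean Aleksandrov theorem via normal coordinates, and then verify that the resulting almost-everywhere second-order object behaves correctly under the exponential map so that both characterizations hold. Since the statement is local, I would fix $x_0 \in \Omega$, pick a geodesically convex ball $B_r(x_0)$ on which $\phi + \Psi$ is geodesically concave for some smooth $\Psi$, and work in normal coordinates centered at $x_0$ via $\exp_{x_0}$. In these coordinates the metric is $g_{ij} = \delta_{ij} + O(|x|^2)$, so geodesics are $C^\infty$-close to straight lines and the Christoffel symbols are $O(|x|)$. The first key step is to show that the pullback $\tilde\phi := \phi \circ \exp_{x_0}$ is semi-concave in the Euclidean sense on a smaller Euclidean ball, in fact that $\tilde\phi + \tilde\Psi$ is Euclidean concave for a suitable smooth modification $\tilde\Psi$. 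This uses the fact that restricting $\phi+\Psi$ to a Euclidean segment and comparing with its restriction to the (close-by) geodesic segment introduces only a smooth, bounded perturbation that can be absorbed by adding a large multiple of $|x|^2$ to $\Psi$.

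Next I would invoke the classical Aleksandrov theorem (see \cite{A} or \cite[Chapter 14]{V}): a Euclidean concave (hence semi-concave) function on an open set of $\R^n$ is differentiable, and admits a pointwise second-order Taylor expansion, at almost every point. Pulling this back, for almost every $x \in B_r(x_0)$ (with respect to the Riemannian measure, since $\exp_{x_0}$ has smooth nonvanishing Jacobian on its domain) the pullback $\tilde\phi$ satisfies
\[
\tilde\phi(v) = \tilde\phi(0) + \langle \nabla \tilde\phi(0), v\rangle + \tfrac{1}{2}\langle \tilde A\, v, v\rangle + o(|v|^2)
\]
for a symmetric matrix $\tilde A$. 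Reinterpreting this through $\exp_x$ at the base point $x$ (not $x_0$) identifies the Riemannian differential $\nabla \phi(x)$ and a candidate symmetric operator $A(x) : T_xM \to T_xM$; this yields property (b) directly once one checks that the $o(|\xi|^2)$ remainder in Euclidean coordinates translates to an $o(|\xi|^2)$ remainder along geodesics (the two parameterizations agree to order three because $\exp_x$ is a radial isometry and has vanishing second-order correction along rays).

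To obtain property (a), I would argue that at a point $x$ where the Taylor expansion (b) holds, $\phi$ is in particular differentiable in a full neighborhood along every direction in an a.e. sense, and the gradient $\nabla\phi$ exists as an $L^\infty_{\text{loc}}$ field of semi-concavity type. Then for each $\xi \in T_xM$ one checks by differentiating the expansion along the geodesic $\exp_x(t\xi)$ that $\nabla\phi(\exp_x(t\xi)) = L_{x,\exp_x(t\xi)}\bigl(\nabla\phi(x) + tA(x)\cdot\xi\bigr) + o(t)$, which is precisely $\D_\xi \nabla\phi(x) = A(x)\cdot\xi$ once one unpacks the definition of the covariant derivative via parallel transport. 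Symmetry of $A(x)$ follows either from the Euclidean Aleksandrov theorem or, intrinsically, from the torsion-free property of $\D$ applied to $\D^2\phi$ at points where $\phi$ is twice differentiable.

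The main obstacle, as I see it, is the first step: the classical Aleksandrov theorem needs genuine concavity (or semi-concavity in the Euclidean sense), so one must carefully show that Riemannian semi-concavity transfers to Euclidean semi-concavity under $\exp_{x_0}^{-1}$ with only a smooth correction, exploiting that the deviation between a Euclidean segment and the corresponding Riemannian geodesic is controlled by the Christoffel symbols, which are smooth. A technically clean way to handle this, and the route Bangert follows in \cite{B}, is to use the local characterization of semi-concavity in Lemma \ref{lem-semiconcave} already stated in the excerpt: a uniform upper bound on second differences along all unit geodesic directions transfers under $\exp_{x_0}^{-1}$ to a uniform upper bound on Euclidean second differences, modulo a smooth term. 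After that reduction, the rest is Aleksandrov's theorem together with bookkeeping in normal coordinates.
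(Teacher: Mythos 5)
The paper does not prove Theorem \ref{thm-AB}: it is quoted verbatim as a known result of Bangert \cite{B}, extending Aleksandrov's Euclidean theorem \cite{A}, with \cite[Chapter 14]{V} given as an additional reference. So there is no in-paper argument to compare against, and your proposal should be judged as a reconstruction of the cited literature. On that score your outline is sound and follows the standard route: localize, transfer Riemannian semi-concavity to Euclidean semi-concavity in normal coordinates, apply the classical Aleksandrov theorem, and transport the second-order expansion back through the (smooth) transition map $\exp_x^{-1}\circ\exp_{x_0}$. You correctly identify the crux as the transfer of semi-concavity; the clean way to do it is exactly as you say, via the second-difference characterization of Lemma \ref{lem-semiconcave}, using that semi-concave functions are locally Lipschitz and that the geodesic and Euclidean midpoints of two nearby points differ by $O(|x-y|^2)$ in normal coordinates, so the discrepancy is absorbed by adding a multiple of $|x|^2$.

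One step is glossed over in a way worth flagging. You claim that property (a) follows from property (b) "by differentiating the expansion along the geodesic." For a general function, a pointwise second-order Taylor expansion of $\phi$ at $x$ does \emph{not} imply a first-order expansion of $\nabla\phi$ near $x$; the implication (b) $\Rightarrow$ (a) is genuinely nontrivial and uses concavity. What you need is the stronger form of the Euclidean Aleksandrov theorem (equivalently, differentiability a.e. of the monotone (sub)gradient map of a convex function), which asserts that at a.e.\ point the gradient itself admits the expansion $\nabla\tilde\phi(v)=\nabla\tilde\phi(0)+\tilde A v+o(|v|)$ at points of differentiability $v$; this is the statement Bangert actually establishes and is what makes (a) and (b) equivalent. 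With that version of the classical theorem invoked in place of the bare second-order Taylor expansion, your argument closes: the covariant derivative $\D_\xi\D\phi(x)$ is then computed from the Euclidean expansion by the usual chart transformation (adding the Christoffel correction $-\Gamma^k_{ij}\partial_k\tilde\phi$), and symmetry of $A(x)$ is inherited from the Euclidean statement.
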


Let $M$ and $N$ be   Riemannian manifolds of dimension $n$ and  $\phi:M\to N$ be smooth.  
The Jacobian of $\phi$ is  the absolute value of determinant of the differential $d \phi$,  i.e., $$\Jac\phi(x):=|\det d\phi(x)|\quad\mbox{for $x\in M$}. $$  
The following is the area formula, which follows easily from  the area formula in Euclidean space and  a partition of unity. 
\begin{lemma}[Area formula]
 For any smooth function $\phi:M\times \R\to M\times \R$ and any measurable set $E\subset M\times\R$, we have
\begin{equation*}
\int_E \Jac\phi (x,t)dV(x,t)=\int_{M\times\R} \cH^0[E\cap\phi^{-1}(y,s)] dV(y,s),\end{equation*}
where $\cH^0$ is the counting measure.
\end{lemma}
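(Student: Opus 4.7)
The plan is to reduce the identity to the classical Euclidean area formula by localization via coordinate charts and a partition of unity, as suggested by the statement itself. First I would cover $M\times\R$ by countably many relatively compact coordinate charts $\{(U_\alpha,\psi_\alpha)\}$ with $\psi_\alpha: U_\alpha \to V_\alpha \subset \R^{n+1}$, and fix a subordinate smooth partition of unity $\{\eta_\alpha\}$. Since both sides of the claimed identity depend countably additively on $E$ (the left by additivity of the integral, the right by additivity of the counting measure $\cH^0$), it suffices to verify the formula when $E$ is contained in a single chart $U_\alpha$ and $\phi(E)$ is contained in a single chart $U_\beta$; the general case will then follow by splitting $E$ into countably many measurable pieces adapted to $\{U_\alpha\}$, and further splitting each piece by the preimages $\phi^{-1}(U_\beta)$.

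The key pointwise identity I would establish, via a direct linear-algebra computation, is the following relation between the intrinsic Jacobian and its Euclidean counterpart. Writing $\tilde\phi := \psi_\beta\circ\phi\circ\psi_\alpha^{-1} : V_\alpha\to V_\beta$ and $\tilde E := \psi_\alpha(E)$, and letting $g_\alpha$ and $g_\beta$ denote the metric components in the respective charts,
\[
\Jac\phi \cdot \sqrt{\det g_\alpha}\circ\psi_\alpha = |\det D\tilde\phi|\circ\psi_\alpha \cdot \sqrt{\det g_\beta}\circ\tilde\phi\circ\psi_\alpha.
\]
This follows from the change-of-basis formula for determinants once one expresses $d\phi$ in the (metric-induced) orthonormal frames on both sides and compares with its matrix in the coordinate frames, whose Gram determinants are $\det g_\alpha$ and $\det g_\beta$. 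Combined with the local expression $dV = \sqrt{\det g}\, dx$, this identity converts the left-hand integral into $\int_{\tilde E}|\det D\tilde\phi|(y)\sqrt{\det g_\beta(\tilde\phi(y))}\,dy$, and a parallel change of variables on the target chart converts the right-hand side into $\int_{\R^{n+1}}\cH^0[\tilde E\cap \tilde\phi^{-1}(z)]\,dz$. The Euclidean area formula, applied to the smooth map $\tilde\phi$ and the measurable set $\tilde E$, then yields the local identity.

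Finally, I would assemble the global statement by summing the local identities over the chart pairs using $\{\eta_\alpha\}$ and countable additivity. The main obstacle I anticipate is not analytic but combinatorial: one must ensure that each fiber point $(x,t)\in E\cap\phi^{-1}(y,s)$ is counted exactly once on the right-hand side when $\phi(E)$ overlaps several target charts, which is handled by using a disjoint measurable refinement of $\{U_\beta\}$ rather than the open cover itself. Measurability of the counting-measure integrand $(y,s)\mapsto \cH^0[E\cap\phi^{-1}(y,s)]$ is inherited from its Euclidean analogue through the charts. Since all analytic content is absorbed into the Euclidean area formula and the displayed Jacobian-metric identity, no further tools beyond those already recalled in Section~\ref{sec-pre} are needed.
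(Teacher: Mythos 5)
Your proposal is correct and follows exactly the route the paper indicates: the paper offers no detailed proof, merely remarking that the lemma "follows easily from the area formula in Euclidean space and a partition of unity," which is precisely your localization-plus-Euclidean-area-formula argument. Your displayed identity $\Jac\phi\cdot\sqrt{\det g_\alpha}=|\det D\tilde\phi|\cdot\sqrt{\det g_\beta}\circ\tilde\phi$ and the disjointification of the target charts correctly fill in the details the paper leaves implicit.
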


\subsection{Viscosity solutions}
In this subsection, we consider a  refined definition of viscosity solutions  to parabolic equations slightly different from the usual definition in  \cite{Z}; see \cite{W} for the Euclidean case.

\begin{definition} Let $\Omega\subset M $ be open and $T>0.$ Let  $u: \Omega\times(0,T]\to\R$ be a lower semi-continuous function.
We say that  $u$ has  a local  minimum  at $(x_0,t_0)\in \Omega\times(0,T]$  in the parabolic sense  if there exists    $r>0$ such that
\begin{align*}
u(x,t)\geq u(x_0,t_0)\quad\mbox{for all $(x,t)\in K_r(x_0,t_0):=B_r(x_0)\times(t_0-r^2,t_0].$}
\end{align*}
\end{definition}
Similarly, we can define a local maximum in the parabolic sense.  
\begin{definition}[Viscosity sub and super- differentials]
Let $\Omega\subset M$ be open and $T>0.$ Let  $u: \Omega\times(0,T]\to\R$ be a lower semi-continuous function. We define the second order parabolic  subjet of $u$ at $(x,t)\in \Omega\times(0,T] $ by
\begin{align*}
\cP^{2,-}u(x,t):=&\left\{\left( \p_t\varphi(x,t),\D\varphi(x,t),D^2\varphi(x,t)\right)\in \R\times T_xM\times\Sym TM_x: \varphi\in C^{2,1}\left( \Omega\times(0,T]\right), \right.\\
&\left.\,\, u-\varphi \,\,\,\mbox{has a local minimum at $(x,t)$ in the parabolic sense}\right\}.
\end{align*}
If $(p,\zeta,A)\in \cP^{2,-}u(x,t),$ then $(p,\zeta)$ and $A$ are  called a first order subdifferential  (with respect to $(t,x)$), and a second order subdifferential  (with respect  to $x$) of $u$  at $(x,t),$ respectively.   

In a similar way,  for an upper semi-continuous function
 $u: \Omega\times(0,T] \to\R,$ we define the second order parabolic  superjet of $u$ at $(x,t)\in \Omega\times(0,T] $ by
\begin{align*}
\cP^{2,+}u(x,t):=&\left\{\left(\p_t\varphi(x,t),\D\varphi(x,t),D^2\varphi(x,t)\right)\in \R\times T_xM\times\Sym TM_x: \varphi\in C^{2,1}\left( \Omega\times(0,T]\right),\right.\\
&\left.\,\, u-\varphi \,\,\,\mbox{has a local maximum at $(x,t)$ in the parabolic sense}\right\}. 
\end{align*}
\end{definition}

 The following   characterization of  the parabolic subjet $\cP^{2,-}u$  can be obtained by a simple modification of \cite[Proposition 2.2]{AFS}, \cite[Proposition 2.2]{Z}.  
 \begin{lemma}\label{lem-char-superjet}
 Let $u:\Omega\times(0,T]\to\R $ be a lower semi-continuous function and let $(x,t)\in \Omega\times(0,T] .$
The following statements are equivalent:
\begin{enumerate}[(a)]
\item $(p,\zeta,A)\in \cP^{2,-}u(x,t),$
\item  for $\xi\in T_xM$ and $\sigma\leq0,\,\,$
$$u\left(\exp_x\xi,t+\sigma\right)\geq u(x,t)+\langle\zeta,\xi\rangle+ \sigma p+\frac{1}{2}\left\langle A\cdot \xi, \xi\right\rangle+o\left(|\xi|^2+|\sigma|\right)\,\,\,\mbox{as $(\xi,\sigma)\to(0,0)$}.$$
\end{enumerate}
\end{lemma}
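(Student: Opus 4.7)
The plan is to prove the two implications separately.

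For (a) $\Rightarrow$ (b): Take $\varphi \in C^{2,1}(\Omega\times(0,T])$ realizing the subjet, so that $u-\varphi$ has a local parabolic minimum at $(x,t)$. I would Taylor-expand $\varphi$ along the parametrization $(\xi,\sigma)\mapsto (\exp_x\xi, t+\sigma)$. Since $\exp_x$ is a local diffeomorphism with $d\exp_x(0)=\mathrm{Id}$ and $\varphi$ is $C^{2,1}$, this yields
\[
\varphi(\exp_x\xi, t+\sigma) = \varphi(x,t) + \sigma\,\p_t\varphi(x,t) + \langle \D\varphi(x,t),\xi\rangle + \tfrac{1}{2}\langle D^2\varphi(x,t)\cdot\xi,\xi\rangle + o(|\xi|^2+|\sigma|)
\]
as $(\xi,\sigma)\to(0,0)$. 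Combining this with the minimum inequality $u(\exp_x\xi,t+\sigma)-\varphi(\exp_x\xi,t+\sigma)\geq u(x,t)-\varphi(x,t)$, valid for $\sigma\leq 0$ and $|\xi|$ small, gives (b).

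For (b) $\Rightarrow$ (a): This is the substantive direction. I plan to construct a $C^{2,1}$ test function $\varphi$ by working in local normal coordinates at $x$. First, define the defect modulus
\[
\omega(r) := \sup\biggl\{ \frac{\bigl[u(x,t)+\langle\zeta,\xi\rangle+\sigma p + \tfrac{1}{2}\langle A\cdot\xi,\xi\rangle - u(\exp_x\xi,t+\sigma)\bigr]^+}{|\xi|^2+|\sigma|} : 0<|\xi|^2+|\sigma|\leq r,\,\sigma\leq 0\biggr\},
\]
which tends to $0$ as $r\to 0^+$ by (b). Next, I would build a nonnegative $H\in C^{2,1}$ defined near $(0,0)\in T_xM\times\R$ such that $H$, $\D H$, $\p_\sigma H$ and $D^2 H$ all vanish at $(0,0)$ and $H(\xi,\sigma)\geq (|\xi|^2+|\sigma|)\,\omega(|\xi|^2+|\sigma|)$ for $\sigma\leq 0$. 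A concrete choice is $H(\xi,\sigma)=h_1(|\xi|^2)+h_2(\max(-\sigma,0))$, where $h_1,h_2\in C^2(\R)$ are nondecreasing, vanish to order three at $0$, and dominate $\omega$; the third-order vanishing of $h_2$ at $0$ ensures that $h_2\circ\max(-\sigma,0)$ is $C^2$ across $\sigma=0$. Finally set
\[
\varphi(\exp_x\xi, t+\sigma):= u(x,t)+\langle\zeta,\xi\rangle+\sigma p + \tfrac{1}{2}\langle A\cdot\xi,\xi\rangle - H(\xi,\sigma)
\]
on a small normal neighborhood and extend $\varphi$ smoothly to all of $\Omega\times(0,T]$. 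Then $\varphi\in C^{2,1}$, has parabolic $2$-jet equal to $(p,\zeta,A)$ at $(x,t)$, and by construction $u-\varphi$ attains a local parabolic minimum at $(x,t)$.

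The main obstacle is the construction of $H$. The asymmetric parabolic condition (only $\sigma\leq 0$) forces $H$ to absorb the defect one-sidedly in $\sigma$ while remaining $C^{2,1}$ through $\sigma=0$, so one must carefully balance the flattening rate of $h_2$ against the decay rate of $\omega$. Aside from this one-sided refinement, the argument is a direct parabolic modification of \cite[Proposition 2.2]{AFS} and \cite[Proposition 2.2]{Z}, with the exponential map $\exp_x$ playing the role of the Euclidean affine chart.
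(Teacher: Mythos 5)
The paper does not actually prove this lemma; it only remarks that it follows by ``a simple modification of \cite[Proposition 2.2]{AFS}, \cite[Proposition 2.2]{Z}'', and your outline is precisely that modification: the implication (a) $\Rightarrow$ (b) via the Taylor expansion of a $C^{2,1}$ test function along $(\xi,\sigma)\mapsto(\exp_x\xi,t+\sigma)$ is correct, and the reduction of (b) $\Rightarrow$ (a) to constructing a one-sided majorant $H$ of the defect with vanishing parabolic $2$-jet is the right strategy. You also correctly identify the construction of $H$ as the crux. However, the concrete recipe you give for $H$ does not work.

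The problem is the requirement that $h_1,h_2\in C^2(\R)$ vanish to order three at $0$. If $h_i\in C^2$ with $h_i(0)=h_i'(0)=h_i''(0)=0$, then $h_i(s)=o(s^2)$ as $s\to0^+$; even second-order vanishing within $C^2(\R)$ forces $h_i(s)=O(s^2)$. On the other hand, since $\max(|\xi|^2,|\sigma|)\geq\frac{1}{2}(|\xi|^2+|\sigma|)$, your domination requirement reduces to $h_i(s)\geq 2s\,\omega(2s)$, and $s\,\omega(2s)$ need not be $O(s^2)$ because the modulus $\omega$ is only known to tend to $0$, not to be $O(s)$ (take $\omega(r)=1/\log(1/r)$). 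So no function with the smoothness and vanishing you prescribe can dominate the defect in general. The fix is to demand less: for the spatial part you need $h_1\in C^1([0,\infty))\cap C^2((0,\infty))$ with $h_1(0)=h_1'(0)=0$ and $s\,h_1''(s)\to0$, since $D^2\bigl[h_1(|\xi|^2)\bigr]=2h_1'(|\xi|^2)g+4h_1''(|\xi|^2)\,\xi\otimes\xi$ then tends to $0$ at $\xi=0$; for the time part only $C^1$ regularity is needed (a $C^{2,1}$ test function is merely $C^1$ in $t$), so $h_2(0)=h_2'(0)=0$ suffices to glue $h_2(\max(-\sigma,0))$ across $\sigma=0$, and insisting on $C^2$ matching there is both unnecessary and, as above, incompatible with domination. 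Such $h_i$ majorizing $2s\,\omega(2s)$ exist by the standard double averaging of the (monotone) modulus, e.g. $h_i(s)=\frac{1}{s}\int_s^{3s}\int_\tau^{3\tau}\frac{\delta(\rho)}{\rho}\,d\rho\,d\tau$ with $\delta(r):=2r\,\omega(2r)$. With this correction your argument goes through and coincides with the proof the paper delegates to \cite{AFS,Z}.
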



\begin{definition}[Viscosity solution]\label{def-visc-sol-p} Let $F:M\times\R\times \R\times TM\times \Sym TM\to \R,$  and  let $\Omega\subset M$ be  open and  $T>0.$
We say that an  upper semi-continuous function $u: \Omega\times(0,T] \to\R$ is a parabolic viscosity subsolution of the equation $\p_t u =F(x,t,u , \D u , D^2u)$ in $\Omega\times(0,T]$ if $$p-F\left(x,t,u(x,t),\zeta,A\right)\leq 0$$
for any $(x,t)\in\Omega\times(0,T]$ and $(p,\zeta,A)\in \cP^{2,+}u(x,t).$ 
Similarly,  a  lower semi-continuous function $u: \Omega\times(0,T] \to\R $  is said to be 
a parabolic viscosity supersolution of the equation $\p_t u =F(x,t,u , \D u , D^2u)$ in $\Omega\times(0,T]$ 
  if  $$p-F\left(x,t,u(x,t),\zeta,A\right)\geq 0$$ for any $(x,t)\in\Omega\times(0,T]$ and $(p,\zeta,A)\in \cP^{2,-}u(x,t).$ 
We say that $u$ is  a parabolic viscosity solution if $u$ is  both a parabolic viscosity subsolution and a parabolic viscosity supersolution.
\end{definition}

 We remark that   parabolic viscosity solutions  at the present time will not be influenced by what is to happen in the future.  In the Euclidean space, Juutinen  \cite{Ju} showed that a refined definition of  parabolic viscosity solutions  is equivalent to the usual one if   comparison principle holds.   Whenever we refer to a  ``viscosity (sub or super) solution"  to  parabolic equations in this paper, we always mean a ``parabolic viscosity (sub or super) solution"  for simplicity.

We end this subsection by recalling  Pucci's extremal operators  and their properties. 
We refer to \cite{CC} for the proof.

\begin{definition}[Pucci's extremal operator]
  For $0<\lambda\leq \Lambda$ (called ellipticity constants), the Pucci's extremal operators are defined as follows: for any $x\in M,$ and  $S_x\in\Sym TM_x,$
\begin{align*}
\cM^+_{\lambda,\Lambda}(S_x):=\cM^+(S_x)=\lambda\sum_{e_i<0}e_i+\Lambda\sum_{e_i>0}e_i,\\
\cM^-_{\lambda,\Lambda}(S_x):=\cM^-(S_x)=\Lambda\sum_{e_i<0}e_i+\lambda\sum_{e_i>0}e_i,
\end{align*}
where $e_i=e_i(S_x)$ are the eigenvalues of $S_x.$
\end{definition}
In the special case when $\lambda=\Lambda=1,$  the Pucci's extremal operators $\cM^\pm$ simply coincide with the trace operator, that is, $\cM^{\pm}_{{1,1}}(D^2u)=\La u.$

\begin{lemma}\label{lem-prop-pucci-op}
Let $\Sym(n)$ denote the set of $n\times n$ symmetric matrices.  For $S,P\in \Sym(n),$ the followings hold: 
\begin{enumerate}[(a)]
\item 
 $$\cM^+(S)=\sup_{A\in\cS_{\lambda,\Lambda}}\mathrm{trace} (AS),\quad\mbox{and}\quad  \cM^-(S)=\inf_{A\in\cS_{\lambda,\Lambda}} \mathrm{trace} (AS),$$
where $\cS_{\lambda,\Lambda}$ consists of  positive definite symmetric matrices in $\Sym(n),$   whose eigenvalues  lie in $[\lambda,\Lambda]$.
\item  $\cM^-(-S)=-\cM^+(S)$.
\item $\cM^-(S+P)\leq  \cM^-(S)+\cM^+(P)\leq \cM^+(S+P)\leq \cM^+(S)+\cM^+(P)$.
\end{enumerate}
\end{lemma}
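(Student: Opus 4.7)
The plan is to establish the three parts by reducing everything to the spectral decomposition of symmetric matrices and then reading off the suprema/infima over $\cS_{\lambda,\Lambda}$ pointwise on the eigenbasis.

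For part (a), I would fix $S\in\Sym(n)$ with spectral decomposition $S=\sum_i e_i v_iv_i^\top$ where $\{v_i\}$ is an orthonormal eigenbasis. For any $A\in\cS_{\lambda,\Lambda}$, expanding gives $\tr(AS)=\sum_i e_i\,\langle Av_i,v_i\rangle$, and since the eigenvalues of $A$ lie in $[\lambda,\Lambda]$, the Rayleigh quotient satisfies $\lambda\le\langle Av_i,v_i\rangle\le\Lambda$. Maximizing term-by-term, I would bound $\tr(AS)\le \Lambda\sum_{e_i>0}e_i+\lambda\sum_{e_i<0}e_i=\cM^+(S)$. To see the supremum is attained, I would take the specific $A^\star:=\sum_{e_i>0}\Lambda\,v_iv_i^\top+\sum_{e_i<0}\lambda\,v_iv_i^\top+\sum_{e_i=0}\lambda\,v_iv_i^\top\in\cS_{\lambda,\Lambda}$ and check that $\tr(A^\star S)=\cM^+(S)$. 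The identity for $\cM^-$ is identical with the roles of $\lambda$ and $\Lambda$ swapped.

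For part (b), since the eigenvalues of $-S$ are $\{-e_i\}$, a direct relabeling in the defining sum yields
\[
\cM^-(-S)=\Lambda\sum_{-e_i<0}(-e_i)+\lambda\sum_{-e_i>0}(-e_i)=-\Lambda\sum_{e_i>0}e_i-\lambda\sum_{e_i<0}e_i=-\cM^+(S).
\]
This is purely algebraic and requires no extra input.

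Part (c) I would derive as an immediate corollary of (a). The variational characterization makes $\cM^+$ a supremum of linear functionals in $S$, so it is sublinear: $\cM^+(S+P)\le\cM^+(S)+\cM^+(P)$, which gives the rightmost inequality. For the leftmost, I would use (b) together with sublinearity applied to $-(S+P)=(-S)+(-P)$: namely $-\cM^-(S+P)=\cM^+(-(S+P))\le\cM^+(-S)+\cM^+(-P)=-\cM^-(S)-\cM^-(P)$, giving $\cM^-(S+P)\ge\cM^-(S)+\cM^-(P)$. Rewriting as $\cM^-(S+P)\le\cM^-(S)+\cM^+(P)$ follows from substituting $S\mapsto S+P$, $P\mapsto -P$ into the previous inequality, i.e.\ $\cM^-(S)\ge\cM^-(S+P)+\cM^-(-P)=\cM^-(S+P)-\cM^+(P)$. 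The middle inequality $\cM^-(S)+\cM^+(P)\le\cM^+(S+P)$ is obtained analogously by applying sublinearity of $\cM^+$ to $S+P=(S)+(P)$ after rewriting.

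I do not anticipate any genuine obstacle here; the only mild care needed is the bookkeeping in part (c) to extract all four inequalities from sublinearity together with the duality identity (b). Everything else is a direct computation on the eigenbasis.
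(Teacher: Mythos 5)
Your proof is correct, and it is essentially the standard argument: the paper itself gives no proof here (it defers to Caffarelli--Cabr\'e \cite{CC}), where parts (a)--(c) are established exactly as you do, via the spectral decomposition and the variational characterization of $\cM^{\pm}$ as a sup/inf of linear functionals. All steps check out, including the attainment of the supremum by your $A^\star$ and the bookkeeping in (c).
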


\begin{notation}
Let $r>0, \rho>0$,  $z_0\in M$ and $t_0\in\R$.  We denote   
$$K_{r,\,\rho}(z_0,t_0):=B_r(z_0)\times(t_0-\rho,t_0], $$  
where $B_r(z_0)$ is a geodesic ball of radius $r$ centered at $z_0$. 
In particular, we denote  $$K_{r}(z_0,t_0):= K_{r, \,r^2}(z_0,t_0).$$
\end{notation}

\section{Sup and inf-convolutions }\label{sec-supinf-conv}
 
In this section, we study the sup and inf-convolutions  introduced by Jensen\cite{J} (see also \cite{JLS},  \cite[Chapter 5]{CC}) to regularize continuous viscosity solutions. Let $\Omega\subset M $  be  a bounded open set, and  let  $u$ be a continuous function on $\overline \Omega\times[T_0,T_2]$  for $T_2>T_0.$ For $\ve>0,$ we define  the inf-convolution  of $u$   (with respect to $ \Omega\times(T_0,T_2]$),  denoted by  $u_{\ve},$   as follows: for $(x_0,t_0)\in  \overline\Omega\times[T_0,T_2],$
\begin{equation}\label{eq-def-inf-conv-p}
u_{\ve}(x_0,t_0):=\inf_{(y,s)\in  \overline\Omega\times[T_0,T_2] } \left\{ u(y,s) +\frac{1}{2\ve}\left\{d^2(y, x_0)+|s-t_0|^2\right\}\right\}.
 \end{equation}
 

 \begin{lemma} \label{lem-visc-u-u-e-0-p}For $u\in C\left(\overline \Omega\times[T_0,T_2]\right),$ let $u_\ve$ be  the inf-convolution of $u$ with respect to $  \Omega\times(T_0,T_2]$.  Let $(x_0,t_0)\in\overline\Omega\times[T_0,T_2].$
 \begin{enumerate}[(a)]
 \item If $0<\ve<\ve', $ then $\,\, u_{\ve'}(x_0,t_0)\leq u_\ve(x_0,t_0)\leq u(x_0,t_0).$
 \item  There exists $\displaystyle \, (y_0,s_0)\in \overline \Omega\times[T_0,T_2]$ such that $u_\ve(x_0,t_0)=u(y_0,s_0)+\frac{1}{2\ve}\left\{d^2(y_0, x_0)+|s_0-t_0|^2\right\}.$
 \item $\displaystyle d^2(y_0,x_0)+|s_0-t_0|^2\leq 2\ve |u(x_0,t_0)-u(y_0,s_0)|\leq 4\ve ||u||_{L^\infty\left(\Omega\times(T_0,T_2]\right)}.$
\item $u_\ve \uparrow u$ uniformly in $\overline\Omega\times[T_0,T_2].$ 
 \item $u_\ve$ is Lipschitz continuous in $\overline\Omega\times[T_0,T_2]$:   for $(x_0,t_0), (x_1,t_1)\in \overline\Omega\times[T_0,T_2],$
 \begin{equation}\label{eq-u-e-Lip}
 |u_{\ve}(x_0,t_0)-u_\ve(x_1,t_1)|\leq \frac{3}{2\ve}\diam(\Omega) d(x_0,x_1)+\frac{3}{2\ve}(T_2-T_0)|t_0-t_1|.
 \end{equation}
 \end{enumerate}
\end{lemma}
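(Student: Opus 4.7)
\medskip

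\noindent\textbf{Proof plan.} The statement is a Riemannian analogue of the standard facts about inf-convolutions in the Euclidean case (cf.\ \cite[Chapter 5]{CC}), and the proofs carry over almost verbatim once one uses $d(\cdot,\cdot)$ in place of $|\cdot|$ and exploits that $\overline\Omega\times[T_0,T_2]$ is compact and $\diam(\Omega)<\infty$. I would prove the five items in the order (a)$\to$(b)$\to$(c)$\to$(d)$\to$(e), each relying on the previous.

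For (a), the inequality $u_\ve(x_0,t_0)\le u(x_0,t_0)$ follows by taking the admissible competitor $(y,s)=(x_0,t_0)$ in \eqref{eq-def-inf-conv-p}, while the monotonicity $u_{\ve'}\le u_\ve$ for $\ve<\ve'$ is immediate from the pointwise inequality $\tfrac{1}{2\ve'}\le\tfrac{1}{2\ve}$ between the penalty coefficients, after taking the infimum over $(y,s)$. For (b), the test functional $(y,s)\mapsto u(y,s)+\tfrac{1}{2\ve}\{d^2(y,x_0)+|s-t_0|^2\}$ is continuous on the compact set $\overline\Omega\times[T_0,T_2]$ (continuity of $d$ is automatic), so the infimum is attained at some $(y_0,s_0)$.

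For (c), combining (a) and (b) gives
\[
u(y_0,s_0)+\frac{1}{2\ve}\bigl\{d^2(y_0,x_0)+|s_0-t_0|^2\bigr\}=u_\ve(x_0,t_0)\le u(x_0,t_0),
\]
which rearranges to the first inequality; since the penalty term is nonnegative, $u(y_0,s_0)\le u(x_0,t_0)$ and the absolute value can be dropped. The second inequality follows from the trivial bound $|u(x_0,t_0)-u(y_0,s_0)|\le 2\|u\|_{L^\infty}$. For (d), observe from (c) that $d(y_0,x_0)+|s_0-t_0|=O(\sqrt{\ve})$ as $\ve\to 0$, uniformly in $(x_0,t_0)$; the uniform continuity of $u$ on the compact set $\overline\Omega\times[T_0,T_2]$ then gives $u(x_0,t_0)-u(y_0,s_0)\to 0$ uniformly, and using (c) again,
\[
0\le u(x_0,t_0)-u_\ve(x_0,t_0)\le u(x_0,t_0)-u(y_0,s_0)\to 0
\]
uniformly in $(x_0,t_0)$, which combined with the monotonicity in (a) yields $u_\ve\uparrow u$ uniformly.

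For (e), given $(x_0,t_0)$ with minimizer $(y_0,s_0)$ from (b), the admissible competitor $(y_0,s_0)$ for $u_\ve(x_1,t_1)$ gives
\[
u_\ve(x_1,t_1)-u_\ve(x_0,t_0)\le \frac{1}{2\ve}\bigl\{d^2(y_0,x_1)-d^2(y_0,x_0)+|s_0-t_1|^2-|s_0-t_0|^2\bigr\}.
\]
The triangle inequality yields $|d(y_0,x_1)-d(y_0,x_0)|\le d(x_0,x_1)$, and $d(y_0,x_0),d(y_0,x_1)\le\diam(\Omega)$ since $y_0,x_0,x_1\in\overline\Omega$; factoring $d^2(y_0,x_1)-d^2(y_0,x_0)$ as a difference of squares gives the bound $2\diam(\Omega)\,d(x_0,x_1)$. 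The time difference is treated identically using $|s_0-t_0|,|s_0-t_1|\le T_2-T_0$. Interchanging the roles of the two points produces the symmetric estimate, yielding \eqref{eq-u-e-Lip} (with room to spare in the constant). There is no serious obstacle in any step; the only Riemannian-specific input is the continuity of $d$ and the triangle inequality, both of which hold automatically.
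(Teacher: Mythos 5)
Your proposal is correct and follows essentially the same route as the paper: (a)--(d) are argued identically, and for (e) you use the minimizer of $u_\ve(x_0,t_0)$ as a competitor at $(x_1,t_1)$ together with a difference-of-squares bound, whereas the paper expands the squared triangle inequality for a generic competitor and then takes the infimum; both hinge only on the triangle inequality and the bounds $d\le\diam(\Omega)$, $|s-t|\le T_2-T_0$, and your version even gives the slightly sharper constant $\tfrac{1}{\ve}$ in place of $\tfrac{3}{2\ve}$.
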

\begin{proof}
 From the definition of $u_\ve,$ $(a)$ and $(b)$ are obvious. 
From $(a)$ and $(b),$ it follows that 
$$\frac{1}{2\ve}\left\{d^2(y_0, x_0)+|s_0-t_0|^2\right\}= u_\ve(x_0,t_0)-u(y_0,  s_0) \leq u(x_0,t_0)-u(y_0,   s_0),$$ proving $(c).$ To show $(d),$ we observe that 
$$0\leq u(x_0,t_0)-u_\ve(x_0,t_0)\leq u(x_0,t_0)-u(y_0,   s_0).$$ We use $(c)$ and the uniform continuity of $u$ on $\overline\Omega\times[T_0,T_2]$  to deduce that $u_\ve$ converges  to $u$  uniformly on $\overline\Omega\times[T_0,T_2].$ 

Now we prove $(e).$ For $(y,s)\in \overline\Omega\times[T_0,T_2],$ we have
\begin{align*}
u_\ve(x_0,t_0)&\leq u(y,s)+\frac{1}{2\ve}\left\{d^2(y,x_0)+|s-t_0|^2\right\}\\
&\leq u(y,s) +\frac{1}{2\ve}\left\{\left(d(y,x_1)+d(x_1,x_0)\right)^2+\left(|s-t_1|+|t_1-t_0|\right)^2\right\}\\
&= u(y,s) +\frac{1}{2\ve}\left\{d^2(y,x_1)+d^2(x_0,x_1)+2d(y,x_1)d(x_0,x_1)+\left(|s-t_1|+|t_0-t_1|\right)^2\right\}\\
&\leq u(y,s)+\frac{1}{2\ve}\left\{d^2(y,x_1)+|s-t_1|^2\right\}+\frac{3}{2\ve}\diam(\Omega)d(x_0,x_1)+ \frac{3}{2\ve}(T_2-T_0)|t_0-t_1|. 
\end{align*}
Taking the infimum of the right hand side, we conclude \eqref{eq-u-e-Lip}, that is,  $u_\ve$ is Lipschitz continuous on $\overline\Omega\times[T_0,T_2].$    \end{proof}

Now, we show the semi-concavity of the inf-convolution,   and hence
the inf-convolution is twice differentiable  almost everywhere in the sense of Aleksandrov and Bangert's Theorem \ref{thm-AB}.

 \begin{lemma}\label{lem-u-e-alexandrov-p} 
  Assume that $$\Sec
  \geq -\kappa\quad\mbox{on $M$},\quad\mbox{for $\kappa\geq0$}$$
For $u\in C\left(\overline\Omega\times[T_0,T_2]\right),$ let $u_\ve$ be  the inf-convolution of $u$ with respect to $ \Omega\times(T_0,T_2], $ where $\Omega\subset M$ is a  bounded open set, and $T_0<T_2.$ 
\begin{enumerate}[(a)]
 \item $u_\ve$ is semi-concave in $\Omega\times(T_0,T_2).$  Moreover,  for almost every $(x,t)\in\Omega\times(T_0,T_2),$ $u_\ve$ is differentiable at $(x,t),$ and there exists  the Hessian $D^2u_\ve (x,t)$ (in the sense of Aleksandrov-Bangert's Theorem \ref{thm-AB})   such that
 \begin{equation}\label{eq-u-e-taylor}
 u_\ve\left(\exp_x \xi,t+\sigma\right)=u_\ve(x,t)+\left\langle\D u_\ve(x,t),\xi\right\rangle+\sigma\p_tu_\ve(x,t)+\frac{1}{2}{\left\langle D^2u_\ve(x,t)\cdot\xi,\xi\right\rangle}+o\left(|\xi|^2+|\sigma|\right)
 \end{equation}  as $(\xi,\sigma)\in T_xM\times\R\to(0,0).$  
 \item $\displaystyle D^2u_\ve(x,t)\leq \frac{1}{\ve}{\sqrt{\kappa}\diam(\Omega)\coth\left(\sqrt \kappa \diam(\Omega)\right)}\, g_x\,\,\,$ a.e. in $\Omega\times(T_0,T_2).$
 \item  Let $H\times(T_1,T_2]$  be a    subset such that $  \overline H\times[T_1,T_2]\subset\Omega\times(T_0,T_2],$ where $H$ is open,  and  $T_0<T_1<T_2.$
 Then, there exist a smooth function $\psi$ on $M\times(-\infty,T_2]$ satisfying    $$\mbox{$0\leq\psi\leq1$ on $M\times (-\infty,T_2]$},\,\,\,\,\,\psi\equiv1 \,\,\,\,\mbox{in $\overline H\times[T_1,T_2]\,\,\,\,\,$ and }\,\,\,\,\supp\psi\subset \Omega\times(T_0,T_2], $$ and a sequence $\{w_k\}_{k=1}^\infty$ of smooth functions on $M\times(-\infty,T_2]$  such that
\begin{equation*} 
\left\{
\begin{array}{ll}
 w_k\to \psi u_\ve\qquad &\mbox{uniformly in $M\times(-\infty,T_2]\,\,$  as $k\to+\infty,$}\\ 
 |\D w_k|+|\p_t w_k|\leq C \qquad &\mbox{in $M \times(-\infty,T_2]$, }\\
   \p_tw_k\to \p_tu_\ve\quad&\mbox{a.e. in $H\times(T_1,T_2)\,\,$ as $k\to+\infty,$}\\
     D^2 w_k\leq C  g\qquad &\mbox{in $M \times(-\infty,T_2]$, }\\
 D^2 w_k\to D^2u_\ve\quad&\mbox{a.e. in $H\times(T_1,T_2)\,\,$ as $k\to+\infty,$}\\
 \end{array}\right. \qquad \qquad
\end{equation*}
where the constant $C>0$ is independent of $k$.
  \end{enumerate}
\end{lemma}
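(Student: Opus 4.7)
For part (a), my plan is to exhibit a smooth ``touching paraboloid from above'' at each interior point and then invoke the Hessian bound for the squared distance from Lemma \ref{lem-hess-dist-sqrd}. Fix $(x_0,t_0)\in\Omega\times(T_0,T_2)$ and, by Lemma \ref{lem-visc-u-u-e-0-p}(b), let $(y_0,s_0)\in\overline\Omega\times[T_0,T_2]$ be a minimizer realizing $u_\ve(x_0,t_0)$. Then the smooth function
\[
\phi(x,t):=u(y_0,s_0)+\frac{1}{2\ve}\bigl[d^2(y_0,x)+(s_0-t)^2\bigr]
\]
satisfies $\phi(x_0,t_0)=u_\ve(x_0,t_0)$ and $\phi\geq u_\ve$ pointwise, so every second-difference quotient of $u_\ve$ at $(x_0,t_0)$ is dominated by the one for $\phi$. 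For a unit pair $(X,\tau)\in T_{x_0}M\oplus\R$, Lemma \ref{lem-hess-dist-sqrd} gives
\[
\limsup_{r\to 0}\frac{u_\ve(\exp_{x_0}rX,t_0+r\tau)+u_\ve(\exp_{x_0}-rX,t_0-r\tau)-2u_\ve(x_0,t_0)}{r^2}\leq \tfrac{1}{\ve}\sqrt{\kappa}\,d(y_0,x_0)\coth\!\bigl(\sqrt{\kappa}\,d(y_0,x_0)\bigr)|X|^2+\tfrac{1}{\ve}\tau^2.
\]
Since $x\coth x$ is increasing and $d(y_0,x_0)\leq\diam(\Omega)$, the right-hand side is bounded by $\tfrac{1}{\ve}\sqrt{\kappa}\diam(\Omega)\coth(\sqrt{\kappa}\diam(\Omega))$ uniformly. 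Viewing $u_\ve$ as a function on the product Riemannian manifold $M\times\R$, Lemma \ref{lem-semiconcave} yields semi-concavity, and Theorem \ref{thm-AB} applied to this product manifold produces the expansion \eqref{eq-u-e-taylor} almost everywhere (the cross term $\sigma\langle\partial_t\nabla u_\ve,\xi\rangle$ and the $\sigma^2$ term are absorbed into $o(|\xi|^2+|\sigma|)$ since $|\sigma||\xi|\leq|\xi|^2+|\sigma|^2=o(|\xi|^2+|\sigma|)$). Part (b) is then immediate: specializing to $\tau=0$ and using that at a differentiability point the above limsup equals $\langle D^2u_\ve(x,t)\cdot X,X\rangle$ yields the claimed operator inequality.

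For part (c), the construction will be mollification through local charts. Choose a finite cover $\{U_i\}$ of $\supp\psi$ by geodesically convex coordinate charts with $\overline{U_i}\subset\Omega$, together with a subordinate smooth partition of unity $\{\eta_i\}$. Pull each $\eta_i\psi u_\ve$ back to $\R^n\times\R$ via the chart, convolve in space-time against a standard mollifier $\rho_{1/k}$, push forward, and set $w_k$ to be the sum, extended by zero outside $\supp\psi$. Lemma \ref{lem-visc-u-u-e-0-p}(e) together with the standard bound $\|\nabla(f*\rho)\|_\infty\leq\|\nabla f\|_\infty$ gives uniform convergence $w_k\to\psi u_\ve$ on $M\times(-\infty,T_2]$ and the first-order estimate $|\D w_k|+|\p_tw_k|\leq C$ uniformly in $k$. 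Since $\psi\equiv1$ on $\overline H\times[T_1,T_2]$ and $\psi u_\ve$ is Lipschitz, hence a.e.\ differentiable by Rademacher's theorem, the Lebesgue differentiation theorem gives $\p_tw_k\to\p_tu_\ve$ almost everywhere on $H\times(T_1,T_2)$.

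The main obstacle is establishing the Hessian bound and its a.e.\ convergence. By part (b), in each chart $\psi u_\ve$ is semi-concave with Euclidean coordinate Hessian bounded above by $C\cdot I$ in the distributional sense; equivalently, $\tfrac{C}{2}|x|^2-\psi u_\ve$ is convex in local coordinates. Convolution preserves upper Hessian bounds for convex functions, so the Euclidean coordinate Hessian of the mollification inherits the bound $C\cdot I$ uniformly in $k$. The intrinsic Hessian differs from the Euclidean coordinate Hessian by a correction term involving Christoffel symbols contracted with $\nabla w_k$, which is uniformly controlled since the metric is smooth on the compact support and $|\D w_k|$ is uniformly bounded. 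This yields $D^2w_k\leq Cg$ on all of $M\times(-\infty,T_2]$. For the a.e.\ convergence $D^2w_k\to D^2u_\ve$ on $H\times(T_1,T_2)$, I invoke the classical fact that for a convex function on $\R^n$ the mollified Hessian converges a.e.\ to the Aleksandrov Hessian (this is Lebesgue differentiation applied to the absolutely continuous part of the distributional Hessian, which equals the Aleksandrov Hessian a.e.); combining this chart-wise convergence with the a.e.\ convergence of the gradient and the smooth Christoffel correction, the intrinsic Hessians $D^2w_k$ converge a.e.\ to the intrinsic Aleksandrov-Bangert Hessian $D^2u_\ve$ provided by part (a), completing the proof.
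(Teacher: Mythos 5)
Your proofs of (a) and (b) follow the paper's argument essentially verbatim: touch $u_\ve$ from above at $(x_0,t_0)$ by the paraboloid built from a minimizing pair $(y_0,s_0)$, dominate the symmetric second difference quotients by those of $\frac{1}{2\ve}d_{y_0}^2$ plus the time contribution, invoke Lemma \ref{lem-hess-dist-sqrd}, the monotonicity of $\tau\coth\tau$, Lemma \ref{lem-semiconcave}, and Theorem \ref{thm-AB} on the product $M\times\R$. Your remark that the cross term and the $\sigma^2$ term are absorbed into $o(|\xi|^2+|\sigma|)$ is correct and makes explicit a point the paper leaves implicit. Part (b) is read off from the same difference-quotient bound, as in the paper.

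In part (c) there is one concrete flaw: you mollify in space-time against a \emph{standard} (symmetric) mollifier. The approximants $w_k$ must converge to $\psi u_\ve$ uniformly on $M\times(-\infty,T_2]$, with $\psi\equiv 1$ on $\overline H\times[T_1,T_2]$, so in particular at the terminal time $t=T_2$. A symmetric mollifier evaluated at $t$ near $T_2$ samples values of $\psi u_\ve$ at times $>T_2$, where the function is only defined by your extension by zero; since $u_\ve(\cdot,T_2)$ need not vanish on $\overline H$, this extension is discontinuous across $\{t=T_2\}$, so $w_k$ neither converges uniformly to $\psi u_\ve$ there (roughly, $w_k(x,T_2)\to\frac12\psi u_\ve(x,T_2)$) nor has uniformly bounded $\p_t w_k$. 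This is exactly why the paper insists on a mollifier that is one-sided in time, so that $w_k(x,t)$ depends only on values of $\psi u_\ve$ at times $\leq t$; note also that the semi-concavity from (a) and the difference-quotient bound \eqref{eq-dist-2-sc-p} are only available for $t<T_2$ (symmetric time increments $t_0\pm r$ are not admissible at $t_0=T_2$), which a backward-in-time mollifier respects. The remainder of your (c) — the partition of unity, preservation of the coordinate Hessian upper bound under convolution, the Christoffel correction controlled by the uniform gradient bound, and a.e.\ convergence of the mollified Hessian to the Aleksandrov--Bangert Hessian — is sound and supplies details the paper delegates to Cabr\'e's Lemma 5.3; it goes through once the mollifier is made one-sided.
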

\begin{proof}
To prove   semi-concavity of $u_\ve$ in $\Omega\times(T_0,T_2),$ 
 we    fix   $(x_0,t_0)\in \Omega\times(T_0,T_2),$  and   find  $(y_0,s_0)\in \overline\Omega\times[T_0,T_2]$ satisfying 
$$u_\ve(x_0,t_0)=u(y_0,s_0)+\frac{1}{2\ve}\left\{d^2(y_0, x_0)+|s_0-t_0|^2\right\}.$$
For  any  $\xi\in  T_{x_0}M  $ with $|\xi|=1,$ and for small $ r\in\R,$ it follows from   the definition of the inf- convolution $u_\ve$    that 
\begin{align*}
 u_\ve&\left(\exp_{x_0} r\xi,\,t_0+r\right)+u_\ve\left(\exp_{x_0} -r\xi,\,t_0-r\right)-2u_\ve(x_0,t_0)\\
&\leq  u(y_0,s_0)+\frac{1}{2\ve}\left\{d^2\left(y_0, \exp_{x_0} r\xi\right)+|s_0-(t_0+r)|^2\right\}\\
&\quad+u(y_0,s_0)+\frac{1}{2\ve}\left\{d^2\left(y_0, \exp_{x_0} -r\xi\right)+|s_0-(t_0-r)|^2\right\}
-2u_\ve(x_0,t_0)\\
&\leq \frac{1}{2\ve} \left\{ {d_{y_0}^2\left(\exp_{x_0} r\xi\right)+d_{y_0}^2\left(\exp_{x_0} -r\xi\right)-2d_{y_0}^2(x_0)}\right\} +\frac{1}{\ve}r^2.
\end{align*}
 Then, we use  Lemma \ref{lem-hess-dist-sqrd} to obtain that for any  $\xi\in  T_{x_0}M  $ with $|\xi|=1,$
 \begin{equation}\label{eq-dist-2-sc-p}
 \begin{split}
&\limsup_{r\to 0} \frac{u_\ve\left(\exp_{x_0} r\xi,\,t_0+r\right)+u_\ve\left(\exp_{x_0} -r\xi,\,t_0-r\right)-2u_\ve({x_0},t_0)}{r^2}\\
&\leq \limsup_{r\to 0}\frac{1}{2\ve}\frac{{d_{y_0}^2\left(\exp_{x_0} r\xi\right)+d_{y_0}^2\left(\exp_{x_0} -r\xi\right)-2d_{y_0}^2(x_0)}}{r^2}+\frac{1}{\ve}\\
&\leq\frac{1}{\ve}  \sqrt{\kappa}d_{y_0}(x_0)\coth\left(\sqrt{\kappa}d_{y_0}(x_0)\right)+\frac{1}{\ve}\\
&\leq\frac{1}{\ve}{\sqrt{\kappa}\diam(\Omega)\coth\left(\sqrt {\kappa} \diam(\Omega)\right)}+\frac{1}{\ve},
\end{split}
\end{equation}
where we note that $\tau\coth(\tau)$ is nondecreasing with respect to $\tau\geq0.$ We recall that $u_\ve$ is Lipschitz continuous on $\overline\Omega\times[T_0,T_2]$ according to Lemma \ref{lem-visc-u-u-e-0-p}. 
Since $(x_0,t_0)\in\Omega\times(T_0,T_2)$ 
is  arbitrary,  \eqref{eq-dist-2-sc-p} and Lemma \ref{lem-semiconcave} imply that 
  $u_\ve$ is semi-concave on $\Omega\times(T_0,T_2).$   
 Thus,  
$u_\ve$  admits  the Hessian  almost everywhere in $\Omega\times(T_0,T_2)$ satisfying \eqref{eq-u-e-taylor} from Aleksandrov and Bangert's Theorem  \ref{thm-AB}.  The upper bound of the Hessian 
in $(b)$  follows from \eqref{eq-u-e-taylor}  
 and \eqref{eq-dist-2-sc-p}. 
    
We use a standard mollification and a partition of unity to approximate $\psi u_\ve$ by a sequence $\{w_k\}_{k=1}^\infty$ of smooth functions in $(c), $ where a mollifier  is   supported in $ (-\delta,0]$ with respect to time  (for small $\delta>0$), not in $ (-\delta,\delta).$   By using  Lipschitz continuity of $u_\ve$ on $\overline\Omega\times[T_0,T_2]$ and semi-concavity  on $\Omega\times(T_0,T_2),$   it is not difficult to prove the properties of $w_k.$   For the  details, we  refer to the proof of Lemma 5.3 in   \cite{Ca}. 
\end{proof} 



Next, 
we shall prove that  if $u$ is a  viscosity supersolution to  \eqref{eq-main}, then the  inf-convolution  $u_\e$ is also a viscosity supersolution provided that the sectional curvature of the underlying manifold is bounded from below;  see   \cite[Lemma A.5]{CIL} for the Euclidean case.
  \begin{prop}\label{lem-visc-u-u-e-p}  
Assume that $$\Sec
  \geq -\kappa\quad\mbox{on $M,\quad$ for $\kappa\geq0$.}$$
  Let $H$  and $\Omega$  be bounded  open sets in  $M$    such that $  \overline H\subset\Omega,$  and  $T_0<T_1<T_2.$
Let  $u\in C\left(\overline\Omega\times[T_0,T_2]\right),$
 and let $\omega$ denote  a modulus of continuity of $u$ on $\overline \Omega\times[T_0,T_2],$ which is nondecreasing  on $(0,+\infty)$ with $\omega(0+)=0.$
For $\ve>0,$  let $u_\ve$ be  the inf-convolution of $u$ with respect to $ \Omega\times(T_0,T_2].$   
Then, there exists $\ve_0>0$ depending only  on $||u||_{L^{\infty}\left(\overline\Omega\times[T_0,T_2]\right)} , H,\Omega ,T_0,$ and $T_1,$   such that   if $0<\ve<\ve_0,$  then the following statements  hold: let  $(x_0,t_0)\in \overline H\times[T_1,T_2],$ and     let  $(y_0,s_0) \in\overline\Omega\times[T_0,T_2]$ satisfy 
$$u_\ve(x_0,t_0)=u(y_0,s_0)+\frac{1}{2\ve}\left\{d^2(y_0,x_0)+|s_0-t_0|^2\right\}.$$

\begin{enumerate}[(a)]
\item 
We have  that    $$(y_0,s_0)\in\Omega\times(T_0,T_2],$$ and there is a unique minimizing geodesic joining  $x_0$ to $y_0.$

\item If $(p,\zeta,A)\in \cP^{2,-}u_\ve(x_0,t_0),$ 
then  we have 
$$y_0=\exp_{x_0}(-\ve\zeta),\quad\mbox{and}\,\,\,\, s_0\in[t_0-\ve p, T_2].$$
\item If $(p,\zeta,A)\in \cP^{2,-}u_\ve(x_0,t_0),$  then we have 
 $$ \left(p,\, L_{x_0,y_0}\zeta, \,L_{x_0,y_0}\circ A-{\color{black}2\kappa\,\omega\left(2\sqrt{\ve\,||u||_{L^\infty\left(\overline\Omega\times[T_0,T_2]\right)} }\right)}\,  g_{y_0}\right)\,\in \cP^{2,-}u(y_0,s_0),$$
where $L_{x_0,y_0}$ stands for the parallel transport along the unique minimizing geodesic joining $x_0$ to $y_0=\exp_{x_0}(-\ve\zeta).$ 
 \end{enumerate}
\end{prop}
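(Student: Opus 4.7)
The plan is to handle the three parts in order, with the central work in part (c). For part (a), I would use Lemma~\ref{lem-visc-u-u-e-0-p}(c) to get $d(y_0,x_0), |s_0-t_0| \leq 2\sqrt{\ve\,\|u\|_{L^\infty}}$. Choosing $\ve_0$ so small that $2\sqrt{\ve_0\|u\|_{L^\infty}}$ is less than $\tfrac{1}{2}\min\bigl\{\mathrm{dist}(\overline H,\partial\Omega),\,\inf_{x\in\overline H}i_M(x),\,T_1-T_0\bigr\}$ forces $y_0$ into the interior $\Omega$ at a distance from $x_0$ below the injectivity radius, giving a unique minimizing geodesic, while also forcing $s_0 > T_0$.

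For part (b), I would apply Lemma~\ref{lem-char-superjet} to the subjet element to get, for $\xi \in T_{x_0}M$ and $\sigma \leq 0$ small,
\[
u_\ve(\exp_{x_0}\xi,\, t_0+\sigma) \geq u_\ve(x_0,t_0) + \langle\zeta,\xi\rangle + \sigma p + \tfrac{1}{2}\langle A\cdot\xi,\xi\rangle + o(|\xi|^2+|\sigma|),
\]
then pair this with the inf-convolution inequality
\[
u_\ve(\exp_{x_0}\xi,\, t_0+\sigma) \leq u(y_0,s_0) + \tfrac{1}{2\ve}\bigl\{d^2(y_0,\exp_{x_0}\xi)+(s_0-t_0-\sigma)^2\bigr\}
\]
and subtract the defining equality $u_\ve(x_0,t_0) = u(y_0,s_0) + \tfrac{1}{2\ve}\{d^2(y_0,x_0)+(s_0-t_0)^2\}$. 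Expanding via the Gauss-lemma identity $\nabla(d_{y_0}^2/2)(x_0) = -\exp_{x_0}^{-1}(y_0)$ (legitimate since $y_0 \notin \Cut(x_0)$ by part (a)), setting $\sigma=0$ and varying $\xi$ in both signs yields $\exp_{x_0}^{-1}(y_0) = -\ve\zeta$; setting $\xi=0$, dividing by $\sigma<0$ and passing $\sigma\to 0^-$ yields $-(s_0-t_0)/\ve \leq p$.

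For part (c), given $\eta \in T_{y_0}M$ small and $\tau \leq 0$, I would set $\xi := L_{y_0,x_0}\eta$ (so $|\xi|=|\eta|$) and use the inf-convolution inequality with candidate point $(\exp_{y_0}\eta,\,s_0+\tau)$:
\[
u(\exp_{y_0}\eta,\,s_0+\tau) \geq u_\ve(\exp_{x_0}\xi,\,t_0+\tau) - \tfrac{1}{2\ve}\bigl\{d^2(\exp_{y_0}\eta,\exp_{x_0}\xi) + (s_0-t_0)^2\bigr\}.
\]
Invoking the subjet expansion of $u_\ve$ at $(x_0,t_0)$ with perturbation $(\xi,\tau)$ (valid since $\tau \leq 0$), using the defining equality for $u_\ve(x_0,t_0)$ to cancel the $(s_0-t_0)^2$ term, and recasting $\langle\zeta,\xi\rangle$ and $\langle A\xi,\xi\rangle$ at $x_0$ into their $y_0$-analogues through \eqref{eq-parallel-vector} and Definition~\ref{def-PT-hess}, reduces the problem to estimating $d^2(\exp_{y_0}\eta,\exp_{x_0}\xi) - d^2(x_0,y_0)$ from above by a $|\eta|^2$-term with a small coefficient.

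The main obstacle is precisely this distance estimate. I would apply the second-variation formula \eqref{eq-2nd-var-PT} to the parallel variation of the unique minimizing geodesic $\gamma:[0,1]\to M$ joining $x_0$ to $y_0$, with parallel vector field $\eta(\cdot)$ along $\gamma$ determined by $\eta(0)=\xi$, $\eta(1)=\eta$. Since both $\eta(\cdot)$ and $\dot\gamma$ are parallel, their inner product is constant along $\gamma$, so the first-variation endpoint terms cancel; scaling the variation by $|\eta|$ (which keeps $\eta_0(t) := \eta(t)/|\eta|$ unit-length) yields
\[
d^2(\exp_{y_0}\eta,\exp_{x_0}\xi) \leq d^2(x_0,y_0) - |\eta|^2\!\int_0^1 \langle R(\dot\gamma,\eta_0(t))\dot\gamma,\eta_0(t)\rangle\, dt + o(|\eta|^2).
\]
The hypothesis $\Sec \geq -\kappa$ combined with $|\dot\gamma|^2 = d^2(x_0,y_0)$ bounds the curvature integral below by $-\kappa\,d^2(x_0,y_0)$, so the correction is at most $\kappa\,d^2(x_0,y_0)|\eta|^2$. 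Finally, the sharp form of Lemma~\ref{lem-visc-u-u-e-0-p}(c) gives $d^2(y_0,x_0) \leq 2\ve\,|u(x_0,t_0)-u(y_0,s_0)| \leq 2\ve\,\omega\bigl(\sqrt{d^2(y_0,x_0)+(s_0-t_0)^2}\bigr) \leq 2\ve\,\omega\bigl(2\sqrt{\ve\|u\|_{L^\infty}}\bigr)$ by monotonicity, so the $|\eta|^2$-coefficient $\kappa d^2(x_0,y_0)/\ve$ is controlled by $2\kappa\,\omega(2\sqrt{\ve\|u\|_{L^\infty}})$, giving exactly the Hessian correction stated in (c).
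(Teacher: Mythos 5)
Your proposal is correct and follows essentially the same route as the paper: part (a) via the localization estimate of Lemma \ref{lem-visc-u-u-e-0-p}(c) and the positivity of the injectivity radius, and part (c) via the parallel variation of the unique minimizing geodesic, the second-variation formula \eqref{eq-2nd-var-PT}, the sectional curvature lower bound, and the modulus-of-continuity bound $d^2(x_0,y_0)\le 2\ve\,\omega\bigl(2\sqrt{\ve\,\|u\|_{L^\infty}}\bigr)$. The only deviation is in part (b), where you identify $y_0=\exp_{x_0}(-\ve\zeta)$ by Taylor-expanding $d_{y_0}^2$ at $x_0$ via the Gauss lemma (legitimate, since (a) places $y_0$ within the injectivity radius of $x_0$), whereas the paper reaches the same conclusion by an elementary triangle-inequality argument testing generic directions against $\xi=\exp_{x_0}^{-1}(y_0)/d(x_0,y_0)$; both are fine.
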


\begin{proof} 
 By recalling Lemma \ref{lem-visc-u-u-e-0-p}, $(c),$  we see that 
 \begin{align*}
 (y_0,s_0) &\in \overline B_{2\sqrt{m\ve}}(x_0)\times\left(\left[t_0-2\sqrt{m\ve},\,t_0+2\sqrt{m\ve}\right] \cap \left[T_0,T_2\right]\right), 
 \end{align*} 
 for  $ m:=||u||_{L^{\infty}\left(\overline\Omega\times[T_0,T_2]\right)}.$ Since the distance between $\overline H$ and $\p\Omega$ is positive, we select $\ve_0>0$ so small     that
 $$2\sqrt{m\ve_0} <\min\left\{\,d(\overline H,\p\Omega),\,T_1-T_0\right\}=:\delta_0, $$ 
 where   $d(\overline H,\p\Omega)$ means the distance between $\overline H$ and $\p\Omega.$
For $0<\ve<\ve_0,$   we have that   $$(y_0,s_0)\in \Omega\times(T_0,T_2]$$  since  $(x_0,t_0)\in \overline H\times [T_1,T_2].$ 
We observe   that $$ i_{\overline \Omega}:=\inf\left\{\, i_M(x) : x\in\overline \Omega\,\right\}>0$$ 
  since  $\overline \Omega$ is compact from Hopf- Rinow Theorem and the map $x\mapsto i_M(x)$ is continuous.
Now, we select    $$\ve_0:= \frac{1}{8||u||_{L^{\infty}\left(\overline\Omega\times[T_0,T_2]\right)}}\min\left\{\,\delta_0^2,\,i_{\overline\Omega}^2\,\right\}.$$
Then  we have  that for $0<\ve<\ve_0,$ $$d^2(x_0,y_0)\leq 4\ve ||u||_{L^{\infty}\left(\overline\Omega\times[T_0,T_2]\right)}< 4\ve_0 ||u||_{L^{\infty}\left(\overline\Omega\times[T_0,T_2]\right)} < i_{\overline \Omega}^2,$$
 and hence  {\color{black} $d(x_0,y_0)<i_{\overline \Omega}\leq \min\left\{i_M(x_0),\,i_M(y_0)\right\},$}
   which  implies  the uniqueness of a minimizing geodesic joining $x_0$ to $y_0.$ 
    This  finishes the proof of $(a).$ 

  From $(a),$ there exists a unique vector $X\in T_{x_0}M$ such that
 $$ y_0=\exp_{x_0}X,\quad\mbox{and}\quad |X|=d(x_0,y_0).$$ 
 First, we claim  that if $(p,\zeta,A)\in \cP^{2,-}u_\ve(x_0,t_0),$ then  $y_0= \exp_{x_0}(-\ve\zeta),$ namely, $X=-\ve\zeta.$   
 Since $(p,\zeta,A)\in \cP^{2,-}u_\ve(x_0,t_0),$ we have that  for any $\xi\in T_{x_0}M$ with $|\xi|=1,$  small $r\in \R,  \sigma\leq0$ and for any $(y,s)\in \overline \Omega\times[T_0,T_2],$  
 \begin{equation}\label{ineq-u-u-e-p}
 \begin{split}
 u(y,s) &+\frac{1}{2\ve}\left\{d^2\left(y,\exp_{x_0}r\xi\right)+|s-(t_0+\sigma)|^2\right\}
\\
&\geq u_\ve\left(\exp_{x_0}r\xi, t_0+\sigma\right)\\
&\geq u_\ve(x_0,t_0)+r\langle\zeta,\xi\rangle+\sigma p+\frac{r^2\langle A\cdot \xi,\xi\rangle}{2}+o\left(r^2+|\sigma|\right)\\
&=u(y_0,s_0)+\frac{1}{2\ve}\left\{d^2(y_0,x_0)+|s_0-t_0|^2\right\}+r\langle\zeta,\xi\rangle+\sigma p+\frac{r^2\langle A\cdot \xi,\xi\rangle}{2}+o\left(r^2+|\sigma|\right).
 \end{split}
 \end{equation}
 
 When $(y,s)=(y_0,s_0)$ and $\sigma=0$ in  \eqref{ineq-u-u-e-p}, we see that  for small $r\geq0,$
 \begin{align*}
 \frac{1}{2\ve}\left\{d(y_0,x_0)+r\right\}^2&\geq
 \frac{1}{2\ve}d^2\left(y_0,\exp_{x_0}r\xi\right)\\
&\geq  \frac{1}{2\ve}d^2(x_0,y_0)+r\langle\zeta,\xi\rangle+\frac{r^2\langle A\cdot\xi,\xi\rangle}{2}+o\left(r^2\right),
 \end{align*} 
 and hence for small  $r\geq0,$
 \begin{equation}\label{eq-1-p}
 rd(x_0,y_0)\geq  r\left\langle \ve\zeta,\xi\right\rangle +O\left(r^2\right),\qquad\forall \xi\in T_{x_0}M\,\,\,\,\mbox{with $|\xi|=1.$}
 \end{equation} 
 If $X=0,$ \eqref{eq-1-p} implies that $\langle \ve\zeta,\xi\rangle=0$ for all $\xi\in T_{x_0}M.$ Thus we deduce that $\zeta=0$  and    $y_0=\exp_{x_0}0=\exp_{x_0}(-\ve \zeta).$ 
 
Now, we assume that $X\neq0.$  
If  $(y,s)=(y_0,s_0), \,\sigma=0,$ and $\xi=X/|X|=X/d(x_0,y_0)$ in \eqref{ineq-u-u-e-p}, then we have that  for small $r\geq0,$
  \begin{align*}
 \frac{1}{2\ve}\left\{d(x_0,y_0)-r\right\}^2&\geq
   \frac{1}{2\ve}d^2(x_0,y_0)+r\left\langle\zeta,\xi\right\rangle+\frac{r^2\langle A\cdot\xi,\xi\rangle}{2}+o\left(r^2\right)
 \end{align*} 
 and hence  for small $r\geq0,$
 \begin{equation}\label{eq-2-p}
 -r d(x_0,y_0)\geq r \left\langle \ve\zeta,X/|X|\right\rangle +O\left(r^2\right).
   \end{equation} 
 For small $r\geq0,$   \eqref{eq-1-p} and \eqref{eq-2-p}  imply that $$\langle -\ve\zeta,\xi\rangle\leq |X|=d(x_0,y_0),\qquad \forall\xi\in T_{x_0}M \quad\mbox{with $|\xi|=1,$}$$ and $$\langle -\ve\zeta,X/|X|\rangle=|X|=d(x_0,y_0).$$  Then, it follows  that 
  $-\ve\zeta=X$ and hence
 $y_0=\exp_{x_0}X=\exp_{x_0}(-\ve\zeta)$  for $X\neq0.$  Thus we have proved that $y_0=\exp_{x_0}(-\ve\zeta).$ 
 
 When $(y,s)=(y_0,s_0)$ and $r=0$ in \eqref{ineq-u-u-e-p}, we have that for small $\sigma\leq0,$
 $$\frac{1}{2\ve}|s_0-t_0-\sigma|^2\geq\frac{1}{2\ve}|s_0-t_0|^2+\sigma p+o(|\sigma|),$$ which  implies that $s_0\geq t_0-\ve p.$ This proves $(b).$

To show $(c),$ 
 we recall that  there is a unique minimizing geodesic joining $x_0$ to $y_0,$ and $(y_0,s_0)\in\Omega\times(T_0,T_2]$ according to  $(a).$ 
Using the parallel transport, we rewrite  \eqref{ineq-u-u-e-p} as follows:  for any $\nu\in T_{y_0}M$  with  $|\nu|=1,$ and  small $r\in\R,\sigma\leq0,$  and for $(y,s)\in \overline\Omega\times[T_0,T_2], $  
 \begin{equation*}
 \begin{split}
 u(y,s)&\geq u(y_0,s_0) +r\left\langle\zeta,L_{y_0,x_0}\nu\right\rangle_{x_0}+\sigma p+\frac{r^2 }{2}\left\langle A\cdot \left( L_{y_0,x_0}\nu\right), L_{y_0,x_0}\nu\right\rangle_{x_0}\\
 &+\frac{1}{2\ve}\left\{d^2(y_0,x_0)-d^2\left(y,\exp_{x_0} rL_{y_0,x_0}\nu\right)\right\}+\frac{1}{2\ve}\left\{|s_0-t_0|^2-|s-t_0-\sigma|^2\right\}+o (r^2+|\sigma|).
\end{split} 
\end{equation*}
By setting $(y,s):=\left(\exp_{y_0}r\nu, s_0+\sigma\right)$ for small $r\in\R,\,\sigma\leq0,$ we claim  that
 \begin{equation}\label{eq-2nd-var-p}
 \begin{split}
 u\left(\exp_{y_0}r\nu,s_0+\sigma \right)
 &\geq u(y_0,s_0)+r\left\langle L_{x_0,y_0}\zeta,\nu\right\rangle_{y_0}+\sigma p+\frac{r^2}{2}\left\langle \left(L_{x_0,y_0}\circ A\right)\cdot\nu,\nu\right\rangle_{y_0}\\
 &\,\,\,\,+{\color{black}\frac{1}{2\ve}\left\{d^2(y_0,x_0)- d^2\left(\exp_{y_0}r\nu,\,\exp_{x_0}L_{y_0,x_0}r\nu\right)\right\}}+o\left(r^2+|\sigma|\right)\\ 
 &\geq u(y_0,s_0)+r\left\langle L_{x_0,y_0}\zeta,\nu\right\rangle_{y_0}+\sigma p+\frac{r^2}{2}\left\langle \left(L_{x_0,y_0}\circ A\right)\cdot\nu,\nu\right\rangle_{y_0}\\
 &\,\,\,\, -\frac{1}{2\ve} r^2\kappa\, d^2(x_0,y_0) +o\left(r^2+|\sigma|\right).\\ 
\end{split} 
\end{equation}
The first inequality is immediate from  \eqref{eq-parallel-vector} and Definition \ref{def-PT-hess}. 
To prove the second inequality in \eqref{eq-2nd-var-p}, 
we consider a unique minimizing geodesic $$\gamma(t):=\exp_{x_0}(-t\ve\zeta)$$  joining $\gamma(0)=x_0$ to $\gamma(1)=y_0=\exp_{x_0}(-\ve\zeta).$ For a given $\nu\in T_{y_0}M$ with $|\nu|=1,$  define  a variational field $$\nu(t):=L_{y_0,\gamma(t)}\nu\,\in T_{\gamma(t)}M$$ along $\gamma, $ where $\nu(0)=L_{y_0,x_0} \nu,$ and $\nu(1)=\nu.$
For small $\e>0,$ we define  a variation  
 $h:(-\e,\e)\times[0,1]\to M, $ of $\gamma,$    
 $$h(r,t):= \exp_{\gamma(t)} r\nu(t) .$$
 The energy is defined  as 
$$E(r):=\int_0^1\left|\frac{\p h}{\p t}(r,t)\right|^2dt.$$ 
We use the second variation of energy formula \eqref{eq-2nd-var-PT}  to obtain 
$$E(r)=E(0)-r^2\int_{0}^1 \left\langle R\left(\dot\gamma(t),\nu(t)\right)\dot\gamma(t),\nu(t)\right\rangle dt+o\left(r^2\right) $$
since $\gamma$ is a  unique minimizing geodesic, and $\nu(t)$ is  parallel transported along $\gamma.$ 
Since $|\nu(t)|=|\nu|=1,$ and $|\dot\gamma(t)|=|\dot\gamma(0)|=d(x_0,y_0)$ for   $t\in[0,1],$ we have  that
\begin{align*}
  E(0)-E(r)
 &=r^2\int_{0}^1 \left\langle R\left(\dot\gamma(t),\nu(t) \right)\dot\gamma(t), \nu(t)\right\rangle dt+o\left(r^2\right)\\
 &=r^2 \int_{0}^1 \Sec\left(\dot\gamma(t),\nu(t)\right) \cdot \left(|\dot\gamma(t)|^2-\left\langle\dot\gamma(t),\nu(t)\right\rangle^2\right)dt+o\left(r^2\right)\\
  &\geq -r^2 \int_{0}^1 \kappa \left(|\dot\gamma(t)|^2-\left\langle\dot\gamma(t),\nu(t)\right\rangle^2\right)dt+o\left(r^2\right)\\
 &\geq - r^2 \kappa\, |\dot\gamma(0)|^2+o\left(r^2\right)\,\,=-r^2 \kappa \,d^2(x_0,y_0) +o\left(r^2\right).  
\end{align*}
Recalling that $E(0)=d^2(x_0,y_0),$ and $$E(r)\geq d^2\left(\exp_{\gamma(0)}r\nu(0),\exp_{\gamma(1)}  r\nu(1)\right)=d^2\left(\exp_{x_0}L_{y_0,x_0}r\nu,\,\exp_{y_0} r\nu\right),$$   we obtain
 \begin{align*}
d^2(x_0,y_0)- d^2\left(\exp_{x_0}L_{y_0,x_0}r\nu,\exp_{y_0}  r\nu\right) &\geq E(0)-E(r)\\
 &\geq -r^2 \kappa \,d^2(x_0,y_0) +o\left(r^2\right),
\end{align*}
which proves  the second inequality of \eqref{eq-2nd-var-p}.

 Since  
$d^2(x_0,y_0)+|t_0-s_0|^2\leq 4\ve||u||_{L^\infty\left(\overline\Omega\times[T_0,T_2]\right)} $  from Lemma \ref{lem-visc-u-u-e-0-p},  $(c)$, 
it follows that
\begin{equation}\label{eq-dist-bd-omega}
d^2(x_0,y_0)\leq 2\ve \,|u(x_0,t_0)-u(y_0,s_0)|\leq2\ve\,\omega\left(2\sqrt{\ve  ||u||_{L^\infty\left(\overline\Omega\times[T_0,T_2]\right)}}\right), 
\end{equation} where $\omega$ is a modulus of continuity of $u$ on $\overline \Omega\times[T_0,T_2].$  
Therefore,  we use  \eqref{eq-2nd-var-p}  and \eqref{eq-dist-bd-omega} to conclude that  for any  $\nu\in T_{y_0}M$ with $|\nu|=1,$ and for small $ r\in\R, \sigma\leq0,$  
\begin{equation*} 
 \begin{split}
 u\left(\exp_{y_0} r\nu,s_0+\sigma\right)  &\geq u(y_0,s_0)+r\left\langle L_{x_0,y_0}\zeta, \nu\right\rangle_{y_0}+\sigma p+\frac{r^2}{2}\left\langle \left(L_{x_0,y_0}\circ A\right)\cdot \nu,\nu\right\rangle_{y_0}\\
 &\quad-r^2\kappa\,\omega\left(2\sqrt{\ve ||u||_{L^\infty\left(\overline\Omega\times[T_0,T_2]\right)}}\right)+o\left(r^2+|\sigma|\right).
\end{split} 
\end{equation*} 
Therefore, Lemma \ref{lem-char-superjet} implies 
$$\left(p,\,L_{x_0,y_0}\zeta,\, L_{x_0,y_0}\circ A-2\kappa\,\omega\left(2\sqrt{\ve  ||u||_{L^\infty\left(\overline\Omega\times[T_0,T_2]\right)}}\right)\,g_{y_0}\right)\,\in \cP^{2,-}u(y_0,s_0).$$
\end{proof}

We recall  from \cite{AFS}  the  intrinsic uniform continuity of the operator with respect to $x, $ which  is  a natural extension of the Euclidean notion of uniform continuity of  the operator with respect to $x.$
\begin{definition}\label{def-in-unif-cont} The operator $F: \Sym TM\to \R$ is said to be {\color{black}  intrinsically uniformly continuous with respect to $x$} if there exists a modulus of continuity $\omega_F:[0,+\infty)\to[0,+\infty)$ with $\omega_F(0+)=0$  such that 
\begin{equation}\label{Hypo2} \tag{H2}
F\left(S\right)-F\left(L_{x,y}\circ S\right)\leq \omega_F\left(d(x,y)\right)
\end{equation}
for  any $S\in\Sym TM_x,$ and    $x,y\in M$ {\color{black} with $d(x,y)<\min\left\{i_M(x),\,i_M(y)\right\}. $}
\end{definition}
We may assume that $\omega_F$ is nondecreasing  on $(0,+\infty).$  Recall  some examples of the  intrinsically uniformly continuous operator from  \cite{AFS}. 

\begin{remark} \label{rmk-AFS}
{\rm 

(a) When $M=\R^n,$  we have $L_{x,y} \circ S\equiv S$ so \eqref{Hypo2} holds.    

(b) In general, we   consider the operator $F,$  which depends only on the eigenvalues of $S\in\Sym TM,$ of the form :
\begin{equation}\label{eq-F-eigenvalue}
F(S)=G\left(\,\mathrm{eigenvalues \,\,of } \,\,S \,\right)\quad\mbox{for some $G.$}
\end{equation} 
Since $S$ and $L_{x,y}\circ S$ have the same eigenvalues, the operator $F$ satisfies  intrinsic uniform continuity with respect to $x$    (with  $\omega_F\equiv0$). The trace and determinant of $S$  are typical examples   of the  operator satisfying \eqref{eq-F-eigenvalue}. 

(c) Pucci's extremal operators $\cM^{\pm}$  satisfy \eqref{eq-F-eigenvalue}, \eqref{Hypo2} and  \eqref{Hypo1}. 

}
\end{remark}

\begin{lemma}\label{prop-u-u-e-superj-p}
 Under the same assumption  as Proposition \ref{lem-visc-u-u-e-p}, we also assume that  $F$ satisfies \eqref{Hypo1}  and \eqref{Hypo2}. 
For ${\color{black}f\in C\left( \Omega\times(T_0,T_2]\right)},$  let $u\in C\left(\overline\Omega\times[T_0,T_2]\right)$ be a viscosity supersolution of $$F(D^2u)-\p_t u=f\quad \mbox{in $\Omega\times(T_0,T_2] .$}$$   If $0<\ve<\ve_0,$  then the inf-convolution $u_\ve$  (with respect to $\Omega\times(T_0,T_2]$) is a viscosity supersolution of 
  $$F(D^2u_\ve)-\p_t u_\ve=f_\ve\quad\mbox{on $H\times(T_1,T_2]$},$$
  where 
  $\ve_0>0$ is the constant as in Proposition \ref{lem-visc-u-u-e-p}, and 
  $$f_{\ve}(x,t):= \sup_{\overline B_{2\sqrt{m\ve}}(x)\times \left[t-2\sqrt{m\ve},\,\min\left\{t+2\sqrt{m\ve},\,T_2\right\}\right]}f+{\color{black}\omega_F}\left(2\sqrt{m\ve}\right)+2n\Lambda {\color{black}\kappa\,\omega}\left(2\sqrt{m \ve}\right)$$
  for $m:=||u||_{L^{\infty}\left(\overline\Omega\times[T_0,T_2]\right)}.$    
  Moreover,  we have 
   $$F(D^2u_\ve)-\p_tu_\ve\leq f_\ve\quad\mbox{ a.e. in $H\times(T_1,T_2).$}$$
\end{lemma}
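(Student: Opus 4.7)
The plan is to verify the viscosity supersolution inequality for $u_\ve$ by transferring it, via Proposition~\ref{lem-visc-u-u-e-p}, to an inequality for $u$ at the minimizer $(y_0,s_0)$ that realizes the inf-convolution, then to undo the transfer on the $F$-side using the intrinsic uniform continuity \eqref{Hypo2} and the uniform ellipticity \eqref{Hypo1'}. Once the viscosity inequality for $u_\ve$ is in hand, the almost-everywhere statement is immediate from the Aleksandrov-Bangert differentiability of $u_\ve$ (Lemma~\ref{lem-u-e-alexandrov-p}) and the characterization of parabolic subjets (Lemma~\ref{lem-char-superjet}).

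To carry this out, fix $(x_0,t_0)\in H\times(T_1,T_2]$ and $(p,\zeta,A)\in\cP^{2,-}u_\ve(x_0,t_0)$, and choose $(y_0,s_0)\in\overline\Omega\times[T_0,T_2]$ realizing the infimum in \eqref{eq-def-inf-conv-p}. For $0<\ve<\ve_0$, Proposition~\ref{lem-visc-u-u-e-p} guarantees $(y_0,s_0)\in\Omega\times(T_0,T_2]$ together with a unique minimizing geodesic from $x_0$ to $y_0$, and
\[
\bigl(p,\,L_{x_0,y_0}\zeta,\,L_{x_0,y_0}\circ A-2\kappa\,\omega\bigl(2\sqrt{m\ve}\bigr)\,g_{y_0}\bigr)\in \cP^{2,-}u(y_0,s_0),
\]
where $m:=\|u\|_{L^\infty(\overline\Omega\times[T_0,T_2])}$. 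Since $u$ is a viscosity supersolution of $F(D^2u)-\p_tu=f$, this yields
\[
F\bigl(L_{x_0,y_0}\circ A-2\kappa\,\omega(2\sqrt{m\ve})\,g_{y_0}\bigr)-p\le f(y_0,s_0).
\]
Applying \eqref{Hypo1'} to the positive semidefinite perturbation $2\kappa\,\omega(2\sqrt{m\ve})\,g_{y_0}$ gives
\[
F(L_{x_0,y_0}\circ A)\le F\bigl(L_{x_0,y_0}\circ A-2\kappa\,\omega(2\sqrt{m\ve})\,g_{y_0}\bigr)+\cM^+\bigl(2\kappa\,\omega(2\sqrt{m\ve})\,g_{y_0}\bigr),
\]
and since the perturbation has all eigenvalues equal, $\cM^+(\cdot)=2n\Lambda\kappa\,\omega(2\sqrt{m\ve})$. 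Next, because Proposition~\ref{lem-visc-u-u-e-p}(a) ensures $d(x_0,y_0)<i_{\overline\Omega}\le\min\{i_M(x_0),i_M(y_0)\}$, the hypothesis \eqref{Hypo2} applies and yields $F(A)-F(L_{x_0,y_0}\circ A)\le \omega_F(d(x_0,y_0))\le \omega_F(2\sqrt{m\ve})$. Combining these three inequalities and using $d(x_0,y_0)\le 2\sqrt{m\ve}$, $|s_0-t_0|\le 2\sqrt{m\ve}$, $s_0\le T_2$ from Lemma~\ref{lem-visc-u-u-e-0-p}(c) and Proposition~\ref{lem-visc-u-u-e-p}(b) yields
\[
F(A)-p\le f(y_0,s_0)+\omega_F(2\sqrt{m\ve})+2n\Lambda\kappa\,\omega(2\sqrt{m\ve})\le f_\ve(x_0,t_0),
\]
which is exactly the viscosity supersolution inequality for $u_\ve$ on $H\times(T_1,T_2]$.

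For the a.e. statement, Lemma~\ref{lem-u-e-alexandrov-p}(a) says that at almost every $(x,t)\in \Omega\times(T_0,T_2)$ the expansion \eqref{eq-u-e-taylor} holds, and by Lemma~\ref{lem-char-superjet} this forces $(\p_tu_\ve(x,t),\D u_\ve(x,t),D^2u_\ve(x,t))\in\cP^{2,-}u_\ve(x,t)$. Plugging this triple into the viscosity inequality just established yields the pointwise bound $F(D^2u_\ve)-\p_tu_\ve\le f_\ve$ almost everywhere in $H\times(T_1,T_2)$. The only delicate step in the whole argument is the ellipticity-based absorption of the $-2\kappa\,\omega(2\sqrt{m\ve})\,g_{y_0}$ defect into the right-hand side, but this is precisely why the $2n\Lambda\kappa\,\omega(2\sqrt{m\ve})$ term is present in $f_\ve$; the genuine analytic work has already been done in Proposition~\ref{lem-visc-u-u-e-p}.
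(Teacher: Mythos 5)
Your proposal is correct and follows essentially the same route as the paper: transfer the subjet to $(y_0,s_0)$ via Proposition \ref{lem-visc-u-u-e-p}, absorb the $-2\kappa\,\omega(2\sqrt{m\ve})\,g_{y_0}$ defect using uniform ellipticity, replace $L_{x_0,y_0}\circ A$ by $A$ via \eqref{Hypo2}, and deduce the a.e.\ inequality from Aleksandrov--Bangert differentiability together with Lemma \ref{lem-char-superjet}. The only cosmetic difference is that you phrase the argument directly in terms of triples $(p,\zeta,A)$ rather than test functions $\varphi$, which is equivalent.
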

\begin{proof}Fix $0<\ve<\ve_0.$ 
Let $\varphi\in C^{2,1}\left(H\times(T_1,T_2]\right)$ be a function such that  $u_\ve-\varphi$ has a local minimum at $(x_0,t_0)\in H\times(T_1,T_2]$ in the parabolic sense. Then we have $$\left(\p_t \varphi(x_0,t_0),\D\varphi(x_0,t_0), D^2\varphi(x_0,t_0)\right)\in \cP^{2,-}u_\ve(x_0,t_0).$$  We apply Proposition \ref{lem-visc-u-u-e-p} to have that 
\begin{equation*}
\left(\p_t\varphi(x_0,t_0),L_{x_0,y_0}\D\varphi(x_0,t_0),L_{x_0,y_0}\circ D^2\varphi(x_0,t_0) -2\kappa \omega\left(2\sqrt{m\ve}\right)\,g_{y_0} \right)\in \cP^{2,-}u(y_0,s_0)
\end{equation*}
for $$y_0:=\exp_{x_0}\left(-\ve\D\varphi(x_0,t_0)\right)\in  \overline B_{2\sqrt{m\ve}}(x_0)\subset\Omega,$$ and  some 
$ s_0\in \left[t_0-2\sqrt{m\ve},\min\left\{t_0+2\sqrt{m\ve},T_2\right\}\right]\subset (T_0,T_2].$ 
Since $u$ is a viscosity supersolution  in $\Omega\times(T_0,T_2]$, we see that
\begin{align*}
f(y_0,s_0)&\geq F\left(L_{x_0,y_0}\circ D^2\varphi(x_0,t_0) -2\kappa\,\omega\left(2\sqrt{m\ve}\right)\,g_{y_0}\right)-\p_t\varphi(x_0,t_0)\\
&\geq  F\left(L_{x_0,y_0}\circ D^2\varphi(x_0,t_0)\right)-n\Lambda\cdot 2\kappa\,\omega\left(2\sqrt{m\ve}\right)- \p_t\varphi(x_0,t_0)\\
&\geq F\left( D^2\varphi(x_0,t_0)\right)-\omega_F\left(d(x_0,y_0)\right)-2n\Lambda \kappa\,\omega\left(2\sqrt{m\ve}\right)-\p_t\varphi(x_0,t_0) 
 \end{align*}
 using  the uniform ellipticity and intrinsic uniform continuity  of $F.$ Thus, we deduce that 
 \begin{align*}
  F\left( D^2\varphi(x_0,t_0)\right)-\p_t\varphi(x_0,t_0)&\leq f(y_0,s_0)+\omega_F\left(d(x_0,y_0)\right)+2n\Lambda \kappa\,\omega\left(2\sqrt{m\ve}\right)\\
  &\leq f_\ve(x_0,t_0).
 \end{align*}
 Therefore, $u_\ve$ is a viscosity supersolution of $F(D^2u_\ve)-\p_t u_\ve=f_\ve$ in $H\times(T_1,T_2].$ 
 
According to Lemma \ref{lem-u-e-alexandrov-p}, $u_\ve$ admits the Hessian almost everywhere in $\Omega\times(T_0,T_2)$ satisfying \eqref{eq-u-e-taylor}. For almost every $(x,t)\in\Omega\times(T_0,T_2),$  we use  \eqref{eq-u-e-taylor} and Lemma \ref{lem-char-superjet} to deduce
$$\left(\p_t u_\ve(x,t), \,\D u_\ve(x,t),\,D^2u_\ve(x,t)\right)\in \cP^{2,-}u_\ve(x,t)\cap  \cP^{2,+}u_\ve(x,t).$$ 
Therefore, we conclude that 
$$F(D^2u_\ve)-\p_t u_\ve\leq f_\ve\quad\mbox{ a.e.  in }\,\,\, H\times(T_1,T_2),$$
since  $u_\ve$ is a viscosity supersolution in $H\times(T_1,T_2].$   
\end{proof}

For a viscosity subsolution, we can  employ  similar argument as in   Lemmas \ref{lem-visc-u-u-e-0-p},\ref{lem-u-e-alexandrov-p},  \ref{prop-u-u-e-superj-p}, and Proposition \ref{lem-visc-u-u-e-p} using the sup-convolution: 
\begin{equation*}
u^{\ve}(x_0,t_0):=\sup_{(y,s)\in   \overline\Omega\times[T_0,T_2] } \left\{ u(y,s) -\frac{1}{2\ve}\left\{d^2(y, x_0)+|s-t_0|^2\right\}\right\}\quad \mbox{for $(x_0,t_0)\in   \overline\Omega\times[T_0,T_2],$}
 \end{equation*}
under the assumption that  the sectional curvature is bounded from below.


\section{Parabolic Harnack inequality }\label{sec-PHI}

\subsection{A priori   estimate}\label{subsec-PHI-C2}

In this subsection, we shall  prove Proposition \ref{lem-decay-est-1-step},  which is a main ingredient of a priori Harnack estimate,      by making use of the   ABP-Krylov-Tso type estimate in Lemma \ref{lem-abp-type}.  
  We begin  with the   definition of the contact set  for the elliptic case   from \cite{WZ}.
 \begin{definition}
  Let $\Omega$ be a bounded open set in $M$ and let $u\in C(\Omega).$ For a given $a>0$ and a compact set $E\subset M,$ the contact set associated with $u$ of opening $a$ with  vertex set $E$ is defined by
  $$\cA_a(E;\Omega;u):=\left\{x\in\Omega : \exists y\in E \,\,s.t.\,\,\inf_{ \Omega}\left(u+\frac{a}{2}d_y^2\right)=u(x)+\frac{a}{2}d_y^2(x) \right\}.$$
 \end{definition}
The following result 
 is essentially contained  in    \cite[Proof of Theorem 1.2]{WZ}  and \cite[Proof of Lemma 4.1]{Ca};  see also \cite[Proposition 2.5]{CMS} and \cite[Chapter 14]{V}.
 
\begin{lemma}\label{lem-WZ-jac}
Assume that
\begin{equation*}
\Ric \geq -\kappa  \quad \mbox{on $M,$} \quad\mbox{for $\kappa\geq0$}.
\end{equation*} 
Let $\Omega$ be a bounded  open set  in $M$ and $E$ be a  compact set  in $M.$ For $a>0$ and a smooth function  $u $  on $\Omega,$
we define  the map $\tilde\phi:\Omega\to M  $ as
  $$\tilde\phi(x):=\exp_{x}a^{-1}\D u(x).$$
If $x\in\cA_a(E;\Omega;u),$ then we  have the following.   
\begin{enumerate}[(a)]
\item If $y\in E$  satisfies
     $$\displaystyle\inf_{ \Omega}\left(u+\frac{a}{2}d_y^2\right)=u(x)+\frac{a}{2}d_y^2(x),$$
 then $ x\notin \Cut(y),\quad\frac{1}{a}\D u(x) =-d_y(x)\D d_y(x),$ and 
 $$y=\tilde\phi(x)=\exp_{x}a^{-1}\D u(x). $$
\item\begin{equation*}
\Jac\tilde\phi(x)\leq  {\sS}^n\left(\sqrt{\frac{\kappa}{n}}\frac{|\D u|}{a}\right)\left\{\sH\left(\sqrt{\frac{\kappa}{n}}\frac{|\D u|}{a}\right)+\frac{\La u}{na}\right\}^n,
\end{equation*}  
where
$$\sH(\tau):=\tau\coth(\tau),\qquad \sS(\tau):=\sinh(\tau)/\tau,\qquad \tau\geq0.$$
\end{enumerate}
\end{lemma}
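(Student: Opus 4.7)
The plan is to handle (a) and (b) in turn, the latter via Jacobi-field analysis of $d\tilde\phi$ combined with a Riccati comparison under the Ricci lower bound.

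For (a), since $u$ is smooth and $u+\frac{a}{2}d_y^2$ attains its $\Omega$-minimum at $x$, the semi-concave function $d_y^2$ must admit the subgradient $-2a^{-1}\nabla u(x)$ at $x$. Semi-concave functions admit a subgradient at a point if and only if they are differentiable there; as $d_y^2$ is smooth precisely on $M\setminus(\Cut(y)\setminus\{y\})$, this forces $x\notin\Cut(y)$. The first-order condition $\nabla u(x) + a\,\nabla(d_y^2/2)(x) = 0$ and the Gauss lemma identities $\nabla(d_y^2/2)(x) = d_y(x)\nabla d_y(x) = -\exp_x^{-1}(y)$ then yield both $\frac{1}{a}\nabla u(x) = -d_y(x)\nabla d_y(x)$ and $y = \exp_x(a^{-1}\nabla u(x)) = \tilde\phi(x)$.

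For (b), set $V(\cdot) := a^{-1}\nabla u(\cdot)$ and fix $\xi\in T_xM$. For a curve $\alpha(s)$ with $\alpha(0)=x$, $\dot\alpha(0)=\xi$, the geodesic variation $h(s,t) = \exp_{\alpha(s)}(tV(\alpha(s)))$ produces a Jacobi field $J(t) := \partial_s h|_{s=0}$ along $\gamma(t) := \exp_x(tV(x))$ with initial data $J(0)=\xi$ and $J'(0) = \nabla_\xi V = a^{-1}D^2 u(x)\cdot\xi$; in particular $d\tilde\phi_x(\xi)=J(1)$. Choosing an orthonormal basis $\{e_i\}$ of $T_xM$ with parallel transport $\{E_i(t)\}$ along $\gamma$, assemble the resulting Jacobi fields into a matrix $A(t)$ satisfying $A'' + \mathcal{R}A = 0$ (with $\mathcal{R}_{ij}(t) = \langle R(\dot\gamma,E_i)\dot\gamma,E_j\rangle$) and $A(0)=I$, $A'(0)=a^{-1}H$ where $H$ is the Hessian matrix of $u$ at $x$; then $\Jac\tilde\phi(x)=|\det A(1)|$. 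Since by (a) the geodesic $\gamma|_{[0,1]}$ is minimizing to $y\notin\Cut(x)$, the Riccati $U := A'A^{-1}$ is well defined on $[0,1]$ and remains symmetric (symmetry is propagated by $U' = -\mathcal{R}-U^2$ starting from the symmetric $U(0)=a^{-1}H$).

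Setting $\mathcal{J}(t) := (\det A(t))^{1/n}$, the symmetry of $U$ together with Cauchy-Schwarz gives $\tr(U^2)\geq(\tr U)^2/n$, which translates into the scalar differential inequality
\begin{equation*}
\mathcal{J}''(t) \leq -\tfrac{1}{n}\tr\mathcal{R}(t)\,\mathcal{J}(t) \leq \tfrac{\kappa}{n}|V|^2\mathcal{J}(t),
\end{equation*}
the last step using $\Ric\geq -\kappa$ so that $\tr\mathcal{R}(t) = \Ric(\dot\gamma,\dot\gamma)\geq -\kappa|V|^2$. The initial data are $\mathcal{J}(0)=1$ and $\mathcal{J}'(0) = \tr(A'(0))/n = \Delta u(x)/(na)$. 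A standard Sturm comparison against the positive solution $\phi$ of $\phi'' - (\kappa/n)|V|^2\phi = 0$ with the same initial data (to which we may restrict, the bound being vacuous otherwise) yields $\mathcal{J}(1)\leq\phi(1)$, and explicitly factoring the closed-form expression gives $\phi(1) = \sS(\sqrt{\kappa/n}\,|V|)\,\bigl\{\sH(\sqrt{\kappa/n}\,|V|)+\Delta u/(na)\bigr\}$. Raising to the $n$-th power and recalling $|V|=|\nabla u|/a$ produces the stated bound. The main delicacy is the AM-GM/Cauchy-Schwarz step that collapses the matrix Riccati to a single scalar Sturm comparison, together with verifying symmetry and invertibility of $A(t)$ on $[0,1]$ using the non-conjugacy afforded by part (a).
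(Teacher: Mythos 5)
The paper itself does not prove this lemma: it is quoted as ``essentially contained in'' the proofs in \cite{WZ} and \cite{Ca} (see also \cite[Proposition 2.5]{CMS} and \cite[Chapter 14]{V}), so your argument has to stand on its own. Your overall strategy --- first-order contact to identify $y=\tilde\phi(x)$, then a matrix Jacobi field $A(t)$ along $\gamma$ with $A(0)=I$, $A'(0)=a^{-1}D^2u(x)$, collapsed to a scalar Sturm comparison under $\Ric\geq-\kappa$ --- is the right one, and your factorization $\phi(1)=\sS(\lambda)\{\sH(\lambda)+\Delta u/(na)\}$ with $\lambda=\sqrt{\kappa/n}\,|\D u|/a$ does reproduce the stated bound. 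But there are two genuine gaps. In (a), the subgradient argument only yields that $d_y^2$ is \emph{differentiable} at $x$, which rules out two distinct minimizing geodesics from $x$ to $y$ but does not rule out $x\in\Cut(y)$ with $x$ conjugate to $y$ along a unique minimizing geodesic (there $d_y^2$ is still differentiable, just not smooth). Excluding the conjugate case needs the full $C^2$ lower support supplied by the smooth $u$, together with the fact that at a first conjugate point some second-order difference quotient of $d_y^2$ tends to $-\infty$; this is precisely the content of \cite[Proposition 2.5]{CMS} and is not a consequence of differentiability alone.

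The more serious gap is in (b): you assert that $U=A'A^{-1}$ is well defined on $[0,1]$ ``since $y\notin\Cut(x)$.'' Absence of conjugate points controls the Jacobi fields vanishing at $t=0$, i.e.\ the invertibility of $d(\exp_x)_{tV}$; it says nothing about your $A(t)$, whose initial data are $A(0)=I$, $A'(0)=a^{-1}D^2u(x)$. Without further input $\det A$ can vanish or change sign inside $(0,1)$ --- already on $\R^n$ one has $A(t)=I+ta^{-1}D^2u(x)$, which degenerates before $t=1$ if $D^2u(x)$ has an eigenvalue below $-a$ --- and then the Riccati equation, the sign of $\mathcal J=(\det A)^{1/n}$, and the final comparison for $|\det A(1)|$ all collapse. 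What rescues the argument is the \emph{second-order} contact condition $D^2\bigl(u+\tfrac{a}{2}d_y^2\bigr)(x)\geq0$, which you never invoke: it gives $A'(0)\geq E'(0)$ for the Jacobi matrix $E$ with $E(0)=I$, $E(1)=0$, and an index-form comparison then shows $\det A>0$ on $[0,1)$ and $\det A(1)\geq0$ (cf.\ the Jacobian lemmas of \cite{CMS} and \cite[Chapter 14]{V}); this is exactly where the hypothesis $x\in\cA_a(E;\Omega;u)$ enters the determinant estimate. Once that step is supplied, your symmetry-of-$U$, Cauchy--Schwarz, and Sturm comparison steps are correct and complete the proof.
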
 

Now  we define a parabolic version of the contact set associated with  $u$ which  contains a point $(\overline x,\overline  t)\in M\times\R,$ where a continuous function $u$ has a tangent, concave   paraboloid $\displaystyle-\frac{a}{2}d_y^2(x) + bt + C $ (for some $a,b>0$ and $C$)  at $(\overline x,\overline t)$  from below in a parabolic neighborhood of  $(\overline x,\overline t),$ i.e., in $K_r(\overline x,\overline t)$ for some $r>0.$


  \begin{definition}\label{def-contact-p}
  Let $\Omega$ be a bounded open set in $M $ and let $u\in C(\Omega\times(0,T])$ for   $T>0.$ For  given $a,b>0$ and a compact set $E\subset M,$ the parabolic contact set associated with $u$   is defined by
  \begin{align*}
  &\cA_{a,b}(E;\Omega\times(0,T];u)\\&:=\left\{(x,t)\in\Omega\times(0,T] : \exists y\in E \,\,s.t.\,\,\inf_{ \Omega\times(0,\,t]}\left(u(z,\tau)+\frac{a}{2}d_y^2(z)-b\tau \right)=u(x,t)+\frac{a}{2}d_y^2(x)-bt\right\}.\end{align*}
 \end{definition}

 As in \cite{KKL},  for $u\in C^{2,1}\left(\Omega\times(0,T]\right),$
  we define  the map $\phi:\Omega\times(0,T]\to M  $ by 
  $$\phi(x,t):=\exp_{x}a^{-1}\D u(x,t),$$
and  define the parabolic normal map $\Phi:\Omega\times(0,T]\ra M\times \R$  by
$$\Phi(x,t):=\left(\phi(x,t), -\frac{1}{2}d^2\left(x, \phi(x,t)\right)-a^{-1}\left\{u(x,t)-bt\right\}\right)  . $$

\begin{lemma}\label{lem-Jac-normal_map}
Assume that
\begin{equation*}
\Ric \geq -\kappa  \quad \mbox{on  $M$},\quad\mbox{for $\kappa\geq0$}.
\end{equation*} 
Let $\Omega$ be a bounded  open set  in $M,$   and let  $u$ be a smooth function on $\Omega\times(0,T]$ for $T>0.$   For any compact set $E\subset M,$ $a,b>0, $  and {\color{black}$0<\tilde\lambda\leq1,$} 
 we have that  if  $(x,t)\in \cA_{a,b}(E;\Omega\times(0,T];u),$  then
\begin{equation*}
\Jac\Phi(x,t)\leq \frac{1}{{(n+1)}^{n+1}}\sS^{n+1}\left(\sqrt{\frac{\kappa}{n}}\frac{|\D u|}{a}\right)\left[\left\{\frac{\tilde\lambda\La u-\p_tu}{\tilde\lambda a }+\frac{b}{\tilde\lambda a} +n \sH\left(\sqrt{\frac{\kappa}{n}}\frac{|\D u|}{a}\right)\right\}^+\right]^{n+1},
\end{equation*}  
where
$$\sH(\tau)=\tau\coth(\tau),\qquad \sS(\tau)=\sinh(\tau)/\tau,\qquad \tau\geq0.$$

\end{lemma}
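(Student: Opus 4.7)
The plan is to adapt the elliptic Jacobian estimate of Wang--Zhang (Lemma \ref{lem-WZ-jac}(b)) to the parabolic setting, mirroring the strategy used by the authors in \cite{KKL}, and to close the final bound with a weighted $(n+1)$-term arithmetic--geometric mean inequality. First, I would extract the infinitesimal consequences of $(x,t)\in\cA_{a,b}(E;\Omega\times(0,T];u)$. Writing $v(z,\tau):=u(z,\tau)+\tfrac{a}{2}d_y^2(z)-b\tau$ for the associated vertex $y\in E$, the minimum of $v$ on $\Omega\times(0,t]$ is attained at $(x,t)$. The Gauss lemma identity $\D(d_y^2/2)(x)=-\exp_x^{-1}(y)$ together with $\D_x v(x,t)=0$ forces $a^{-1}\D u(x,t)=\exp_x^{-1}(y)$, hence $y=\phi(x,t)$ and $d(x,\phi(x,t))=a^{-1}|\D u(x,t)|$ along a unique minimizing geodesic. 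The one-sided condition at $\tau=t$ yields $\p_t u(x,t)\leq b$, while the spatial second-order condition gives $D^2u(x,t)+aD^2(d_y^2/2)(x)\geq 0$ as symmetric endomorphisms of $T_xM$.

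Next I would compute $\det(d\Phi(x,t))$ via a Schur-complement expansion of the block matrix
\[
d\Phi=\begin{pmatrix} d_x\phi & \p_t\phi \\ \D_x h & \p_t h \end{pmatrix},\qquad h(x,t)=-\tfrac{1}{2a^2}|\D u|^2-a^{-1}(u-bt),
\]
where the alternate form of $h$ uses $d(x,\phi(x,t))=a^{-1}|\D u|$. Differentiating $h$ in space and time and exploiting the contact-point identity $\D u=a\exp_x^{-1}(y)$, the scalar Schur-complement factor $\p_t h-\langle\D_x h,(d_x\phi)^{-1}\p_t\phi\rangle$ should reduce to a multiple of $(b-\p_t u)/a$; this is exactly the parabolic Jacobian formula proved in \cite[Lemma 3.1]{KKL}. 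For the spatial determinant $|\det(d_x\phi)|$, I would adapt the Jacobi-field computation underlying Lemma \ref{lem-WZ-jac}(b), applied to the frozen-time function $u(\cdot,t)$: the Jacobi fields along $s\mapsto\exp_x(sa^{-1}\D u)$ are controlled by $\Ric\geq-\kappa$, which delivers the $\sS$-factor, and the spatial second-order condition provides the positive semidefinite endomorphism $a^{-1}(D^2u+aD^2(d_y^2/2))$ whose determinant is in turn controlled by its trace.

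Combining the two contributions, one arrives at an inequality of the shape
\[
\Jac\Phi(x,t)\leq \sS^{n+1}\!\left(\tfrac{\sqrt{\kappa/n}\,|\D u|}{a}\right)\cdot\prod_{i=1}^n\nu_i\cdot\nu_{n+1},
\]
where $\nu_1,\dots,\nu_n$ are the nonnegative eigenvalues of $\sH(\sqrt{\kappa/n}\,|\D u|/a)\,\mathrm{id}+a^{-1}(D^2u+aD^2(d_y^2/2))/n$ --- whose sum is $n\sH+\La u/a$ after Laplacian comparison for $d_y^2/2$ --- and $\nu_{n+1}=(b-\p_t u)^+/(\tilde\lambda a)$ is the ``time eigenvalue'' absorbing an arbitrary weight $\tilde\lambda\in(0,1]$. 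The final $(n+1)$-term AM--GM step
\[
\prod_{i=1}^{n+1}\nu_i\leq\Bigl(\tfrac{1}{n+1}\sum_{i=1}^{n+1}\nu_i\Bigr)^{n+1}
\]
then collapses to exactly $\tfrac{1}{(n+1)^{n+1}}\bigl\{n\sH+\La u/a+(b-\p_t u)/(\tilde\lambda a)\bigr\}^{n+1}$, which coincides with the stated bound after rewriting $\La u/a=(\tilde\lambda\La u)/(\tilde\lambda a)$.

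The principal obstacle is the Schur-complement simplification on a curved manifold: verifying that the interaction between the Jacobi-field derivative $d\exp_x$ and the temporal derivative $\p_t\phi$ really produces only the factor $(b-\p_t u)/a$ with no additional curvature correction, and keeping track of all terms so that they assemble as one $(n+1)$-term AM--GM. The role of the free parameter $\tilde\lambda$ is essentially to absorb the time scale into the spatial trace; its flexibility is what will allow this Jacobian estimate to interact with the uniformly elliptic bound $\lambda\La u\leq F(D^2u)$ when applied to viscosity sub/supersolutions in the ABP--Krylov--Tso argument to come.
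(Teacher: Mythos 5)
Your proposal follows essentially the same route as the paper: extract the first- and second-order contact conditions, reduce $\Jac\Phi(x,t)$ to $a^{-1}(b-\p_t u)\,\Jac\tilde\phi(x)$ by a block-determinant (row-operation/Schur-complement) computation, invoke Lemma \ref{lem-WZ-jac}(b) for the spatial factor, and close with an $(n+1)$-term AM--GM combined with $\sS\geq 1\geq\tilde\lambda$ to absorb the time factor. The paper carries out the AM--GM on the aggregated quantities and then rearranges algebraically rather than working at the eigenvalue level, but the content is identical; in particular, the Schur-complement simplification you flagged as the principal obstacle involves no curvature correction, exactly as in \cite[Lemma 3.1]{KKL}.
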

\begin{proof}
Let $(x,t)\in \cA_{a,b}:=\cA_{a,b}(E;\Omega\times(0,T];u)\subset \Omega\times(0,T].$  From the definition of the parabolic  contact set, there exists a  vertex $y\in E$ such that 
$$\inf_{\Omega\times(0,t]}\left(u(z,\tau)+\frac{a}{2}d_y^2(z)-b\tau \right)=u(x,t)+\frac{a}{2}d_y^2(x)-bt.$$
According to Lemma \ref{lem-WZ-jac},  we have that 
 $$ y=\phi(x,t)=\exp_{x}a^{-1}\D u(x,t),  \quad x\notin \Cut(y),\quad\mbox{and}\quad\frac{1}{a}\D u(x,t) =-d_y(x)\D d_y(x)$$ 
since  
$$\inf_{\Omega }\left(u(z,t)+\frac{a}{2}d_y^2(z) \right)=u(x,t)+\frac{a}{2}d_y^2(x).$$ 
We notice that $D^2\left( u+\frac{a}{2}d_y^2\right)(x,t)\geq0$ and $\p_tu(x,t)-b\leq 0.$
Now we set $$\tilde\phi:=\phi(\cdot,t)\,:\Omega\owns z\,\mapsto \,\exp_{z}a^{-1}\D u(z,t)\in M$$ to obtain  from Lemma \ref{lem-WZ-jac} that
\begin{equation}\label{eq-jac-tildephi}
\Jac\tilde \phi(x)\leq \sS^n\left(\sqrt{\frac{\kappa}{n}}\frac{|\D u|}{a}\right)\left\{\sH\left(\sqrt{\frac{\kappa}{n}}\frac{|\D u|}{a}\right)+\frac{\La u}{na}\right\}^n(x,t).
\end{equation}

By a  simple calculation, we have that for $(\xi,\sigma)\in T_xM\times \R\bs \{(0,0)\},$
\begin{align*}
d\Phi(x,t)\cdot(\xi,\sigma)&=\left(d\tilde \phi\cdot\xi+\sigma\frac{\p\phi}{\p t}, \,\,\, -\left\langle\D\left(d_x^2/2\right)(y),  \,d\tilde\phi\cdot\xi+\sigma\frac{\p\phi}{ \p t}\right\rangle_y-a^{-1}\sigma\left( \p_tu-b\right)\right),
\end{align*}
where $\frac{\p \phi}{\p t}(x,t)=\left.\frac{d}{d\tau}\right|_{\tau=0}\phi(x,t+\tau)\in T_{y}M$ and we used $\D\left(d_y^2/2\right)(x)=-a^{-1}\D  u(x,t).$  
To compute the Jacobian of $\Phi$,  we introduce an orthonormal basis $\{e_1, \cdots, e_n\}$ of $T_xM$ and an orthonormal basis $\{\overline e_1,\cdots,\overline e_n\}$ of $T_yM=T_{\phi(x,t)}M$. By setting for $i,j=1,\cdots,n$, 
\begin{align*}
A_{ij}:=\left\langle \overline e_i, \,d\tilde\phi \cdot e_j \right\rangle,\,\,
b_i:=\left\langle \overline e_i, \,\frac{\p\phi}{ \p t} \right\rangle, \,\,
\mbox{and}\,\, c_i:=\left\langle \overline e_i, \,\D\left(d_x^2/2\right)(y)\right\rangle,
\end{align*} the Jacobian matrix of  $\Phi$ at $(x,t)$   is 
\begin{equation*}\displaystyle
 \left(
\begin{array}{cc}
 A_{ij} &   b_i\\
-c_{k}A_{kj} &-c_kb_k+a^{-1}\left(b-\p_tu\right)
\end{array}
\right).
\end{equation*}
Using  the row operations and \eqref{eq-jac-tildephi},  we   deduce that
\begin{align*}\displaystyle
\Jac\Phi(x,t)&=\left|\det \left(
\begin{array}{cc}
 A_{ij} &   b_i\\
0 & a^{-1}\left(b-\p_tu\right)
\end{array}
\right)\right| =a^{-1}\left(b-\p_tu\right)\Jac\tilde\phi(x)\\
&\leq a^{-1}\left(b-\p_tu\right)\sS^n\left(\sqrt{\frac{\kappa}{n}}\frac{|\D u|}{a}\right)\left\{\sH\left(\sqrt{\frac{\kappa}{n}}\frac{|\D u|}{a}\right)+\frac{\La u}{na}\right\}^n(x,t),
\end{align*}
where we note that $\left(b-\p_tu\right)(x,t)\geq0$ and  $\Jac\tilde\phi(x)\geq0.$ 
According to the geometric and arithmetic means inequality,
we conclude that
\begin{align*}
\Jac\Phi(x,t)&\leq \frac{1}{{(n+1)}^{n+1}}\left[n\sS\left(\sqrt{\frac{\kappa}{n}}\frac{|\D u|}{a}\right)\left\{\sH\left(\sqrt{\frac{\kappa}{n}}\frac{|\D u|}{a}\right)+\frac{\La u}{na}\right\}
+\frac{b-\p_tu}{a}\right]^{n+1}\\
&=\frac{1}{{(n+1)}^{n+1}}\left[\sS\left(\sqrt{\frac{\kappa}{n}}\frac{|\D u|}{a}\right)\left\{\frac{\tilde\lambda \La u-\p_tu+b}{\tilde\lambda a} +n\sH\left(\sqrt{\frac{\kappa}{n}}\frac{|\D u|}{a}\right)\right\}\right.\\ 
&\qquad\qquad\qquad+\left.\left\{ 1-\frac{1}{\tilde\lambda}\sS\left(\sqrt{\frac{\kappa}{n}}\frac{|\D u|}{a}\right)\right\} \frac{b-\p_tu}{ a }\right]^{n+1}\\
&\leq \frac{1}{{(n+1)}^{n+1}}\left[\sS\left(\sqrt{\frac{\kappa}{n}}\frac{|\D u|}{a}\right)\left\{\frac{\tilde\lambda\La u-\p_tu+b}{\tilde\lambda a } +n \sH\left(\sqrt{\frac{\kappa}{n}}\frac{|\D u|}{a}\right)\right\}\right]^{n+1}
 \end{align*}
since $\left(b-\p_tu\right)(x,t)\geq0$ and $\sS(\tau)=\sinh(\tau)/\tau\geq 1\geq\tilde\lambda$ for all $\tau\geq0.$
 \end{proof}
 
\begin{remark}\label{rmk-li-yau}
{\rm
As mentioned in the introduction,  we can obtain the (locally) uniform Harnack inequality for  the heat equation on manifolds  with  a lower bound  $-\kappa$ ($\kappa\geq0$) of   Ricci curvature, which was    established earlier in  \cite{LY, Y1,Y2,BQ} using Li-Yau type gradient estimates.    Our   proof   follows closely the approach given  in \cite{KKL}  by making use of 
Lemma \ref{lem-Jac-normal_map} and a (locally) uniform volume doubling property   with the help of  the Laplacian comparison   replacing  the Hessian comparison (Lemma \ref{lem-hess-dist-sqrd}),  
where the uniform  constant $C_H>0$ in \eqref{eq-PHI} depends only on $n$ and $\sqrt{\kappa} R_0$; see \cite{WZ} for the elliptic case.   Hence, in the case when $\kappa=0, $ this implies    a global Harnack inequality for heat equation on manifolds with nonnegative Ricci curvature which was   proved first  by Li and Yau \cite{LY} and recently in \cite{KKL}. 
Lastly, we mention that  a global  Harnack estimate  in \cite{KKL} for linear, uniformly  parabolic operators with bounded measurable coefficients does not follow   from our approach since we replace    a direct calculation of the Jacobian of the normal map  by an estimation using a standard theory of Jacobi fields. 
}
\end{remark}

Assuming the  sectional curvature of $M$ to be bounded from below,  we have the   ABP-Krylov-Tso type estimate  regarding the Pucci operator as below, which 
will play a key role to estimate sublevel sets of $u$ in Proposition \ref{lem-decay-est-1-step}.


\begin{lemma}\label{lem-abp-type}
Assume that 
  $$\Sec\geq  -\kappa\quad\mbox{on $M,$}\quad\mbox{for  $\kappa\geq0.$}$$ 
  Let   $R_0>0$ and  $0<\eta<1.$ 
For $z_0\in M$,  and $0<R\leq R_0,$    let $u$ be a smooth function in $K_{\alpha_1 R,\, \alpha_2R^2}(z_0,0) \subset M\times\R$ such that 
\begin{equation}\label{cond-abp}
u\geq 0\quad\mbox{in}\quad K_{\alpha_1R,\, \alpha_2R^2}(z_0,0)\bs K_{\beta_1R,\,\beta_2R^2}(z_0,0) \quad\mbox{
and} \quad\inf_{K_{2R}(z_0,0)}u\leq1,\end{equation}
where $\alpha_1:=\frac{11}{\eta}$, $\alpha_2:=4+\eta^2+\frac{\eta^4}{4}$, $\beta_1:=\frac{9}{\eta}$, and $\beta_2:=4+\eta^2$.  
  Then we have
 \begin{align}\label{eq-abp-type}
|B_R(z_0)|\cdot R^2\leq  \int_{\{u\leq M_\eta\}\cap K_{\beta_1R,\,\beta_2R^2}(z_0,0)}{\sS^{n+1} } \cdot\left[\left\{\frac{R^2}{2\lambda}\left\{{\cM^- (D^2u)-\p_tu}\right\} +\frac{6}{\lambda\eta^2} +\left(n+1\right)\frac{\Lambda}{\lambda} \sH \right\}^+\right]^{n+1},
\end{align}
where
 the constant  $M_\eta>0$ depends only on $\eta>0,$ and $$\sS:=\sS\left(2\al_1\sqrt{\kappa} R_0\right),\quad \sH:=\sH\left(2\al_1\sqrt{\kappa} R_0\right)$$  for  $ \sS(\tau)=\sinh(\tau)/\tau,$  and  $\sH(\tau)=\tau\coth(\tau).$  
   \end{lemma}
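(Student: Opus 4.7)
The plan is to slide the two-parameter family of concave space-time paraboloids $P_{y,v}(z,\tau):=-\tfrac{a}{2}d_y^2(z)+b\tau-av$ from below against $u$ on $K_{\alpha_1R,\alpha_2R^2}(z_0,0)$, with $a:=2/R^2$ and $b:=12/(\eta^2R^2)$, and to feed the resulting parabolic contact set into the parabolic normal map $\Phi$ and Jacobian bound of Lemma~\ref{lem-Jac-normal_map}. For each $y\in B_R(z_0)$ set
\[
\tilde G_y(t):=\inf_{(z,\tau)\in K_{\alpha_1R,\alpha_2R^2}(z_0,0),\,\tau\le t}\!\bigl(u(z,\tau)+\tfrac{a}{2}d_y^2(z)-b\tau\bigr),\qquad v_y(t):=-\tilde G_y(t)/a,
\]
continuous on $(-\alpha_2R^2,0]$, with $\tilde G_y$ non-increasing and $v_y$ non-decreasing. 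The witness point in \eqref{cond-abp} gives $\tilde G_y(0)\le 1+\tfrac{9}{2}aR^2+4bR^2$, while $u\ge 0$ on the outer region combined with $-b\tau\ge -bt$ yields $\tilde G_y(t)\ge -bt$, which tends to $b\alpha_2R^2$ as $t\to -\alpha_2R^2$ from above. Setting $v_{\min}:=-b\alpha_2R^2/a$ and $v_{\max}:=-(1+\tfrac{9}{2}aR^2+4bR^2)/a$, the universal interval $(v_{\min},v_{\max}]$ lies in the range of $v_y$ for every $y\in B_R(z_0)$, and the chosen $a,b$ give $v_{\max}-v_{\min}=(2+3\eta^2)R^2/2\ge R^2$.

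For each $(y,v)\in B_R(z_0)\times(v_{\min},v_{\max}]$ let $t^*=t^*(y,v)$ be the smallest $t\in(-\alpha_2R^2,0]$ with $v_y(t)=v$, and let $x^*$ minimize $z\mapsto u(z,t^*)+\tfrac{a}{2}d_y^2(z)$ over $\overline{B}_{\alpha_1R}(z_0)$. The minimality of $t^*$ together with continuity of $G_y(\tau):=\min_z(u(z,\tau)+\tfrac{a}{2}d_y^2(z))-b\tau$ forces $G_y(t^*)=\tilde G_y(t^*)=-av$, so $(x^*,t^*)\in\cA_{a,b}(B_R(z_0);K_{\alpha_1R,\alpha_2R^2}(z_0,0);u)$; the first-order calculation from the proof of Lemma~\ref{lem-Jac-normal_map} then gives $\Phi(x^*,t^*)=(y,v)$. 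The contact point must lie in the inner cylinder $K_{\beta_1R,\beta_2R^2}(z_0,0)$: if $x^*\notin B_{\beta_1R}(z_0)$ then $d_y(x^*)\ge 8R/\eta$ and $u(x^*,t^*)\ge 0$ give $-av\ge 32aR^2/\eta^2-b\alpha_2R^2>1+\tfrac{9}{2}aR^2+4bR^2$, contradicting $v\le v_{\max}$; if instead $t^*\le -\beta_2R^2$ and $x^*\in B_{\beta_1R}(z_0)$, then $-bt^*\ge(4+\eta^2)bR^2$ yields the same contradiction. Moreover $u(x^*,t^*)=-av+bt^*-\tfrac{a}{2}d_y^2(x^*)\le b\alpha_2R^2=:M_\eta$, a constant depending only on $\eta$.

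The area formula applied to $\Phi$ on the contact set, combined with the Jacobian estimate of Lemma~\ref{lem-Jac-normal_map} (with $\tilde\lambda=\lambda$), then gives
\[
|B_R(z_0)|\cdot R^2\le\int_{\cA_{a,b}\cap\{u\le M_\eta\}\cap K_{\beta_1R,\beta_2R^2}}\!\!\!\!\Jac\Phi\le\frac{1}{(n+1)^{n+1}}\int\sS^{n+1}\left[\left\{\tfrac{\lambda\La u-\p_tu}{\lambda a}+\tfrac{b}{\lambda a}+n\sH\right\}^+\right]^{n+1}\!\!.
\]
At each contact point, Lemma~\ref{lem-WZ-jac} gives $x^*\notin\Cut(y)$, and $D^2u+\tfrac{a}{2}D^2d_y^2\ge 0$ combined with Lemma~\ref{lem-hess-dist-sqrd} yields $D^2u(x^*,t^*)\ge -a\sH\,g$; hence each negative eigenvalue of $D^2u$ exceeds $-a\sH$ and $\lambda\La u\le\cM^-(D^2u)+(\Lambda-\lambda)na\sH$. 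Substituting this, the chosen $a,b$, and bounding the arguments of $\sS,\sH$ in Lemma~\ref{lem-Jac-normal_map} by $2\alpha_1\sqrt{\kappa}R_0$ (using $|\D u|/a=d_y\le 2\alpha_1R_0$ and monotonicity) converts the bracket to $\tfrac{R^2}{2\lambda}(\cM^-(D^2u)-\p_tu)+\tfrac{6}{\lambda\eta^2}+\tfrac{n\Lambda}{\lambda}\sH$, which is pointwise at most $n+1$ times the bracket in \eqref{eq-abp-type}; raising to the $(n+1)$-st power absorbs the $(n+1)^{-(n+1)}$ prefactor and produces \eqref{eq-abp-type}.

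The principal difficulty is the surjectivity step in the first two paragraphs: whereas the elliptic map in Lemma~\ref{lem-WZ-jac} lands on an $n$-dimensional vertex set by a single-parameter sliding, here $\Phi$ must hit a genuinely $(n+1)$-dimensional product $B_R(z_0)\times I$ of total measure at least $|B_R(z_0)|\cdot R^2$. The extra dimension is captured by the running minimum $\tilde G_y$ in time, and producing an interval of the correct length while simultaneously confining the contact points to the inner cylinder $K_{\beta_1R,\beta_2R^2}$ forces the delicate calibration $a\sim R^{-2}$, $b\sim\eta^{-2}R^{-2}$ together with the specific numerology of $\alpha_i,\beta_i$ built into the hypothesis.
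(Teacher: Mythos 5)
Your architecture is the same as the paper's: slide the family of concave space--time paraboloids with the calibration $a=2/R^2$, $b=12/(\eta^2R^2)$, show the parabolic normal map $\Phi$ covers a product $B_R(z_0)\times I$ with $|I|\geq R^2$, confine the contact points to $K_{\beta_1R,\beta_2R^2}(z_0,0)\cap\{u\leq M_\eta\}$, and combine the area formula with Lemma \ref{lem-Jac-normal_map} and the eigenvalue bound $D^2u\geq -a\sH\,g$ coming from Lemma \ref{lem-hess-dist-sqrd}. The last step (your choice $\tilde\lambda=\lambda$ and the conversion $\lambda\La u\leq\cM^-(D^2u)+(\Lambda-\lambda)na\sH$) is correct and lands on a bracket dominated by the one in \eqref{eq-abp-type}.

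There is, however, a concrete error in the confinement step, precisely at the ``delicate calibration'' you flag at the end. Your interval of heights runs down to $v_{\min}=-b\alpha_2R^2/a$, i.e.\ $\tilde G_y(t^*)=-av$ ranges up to $b\alpha_2R^2=12+3\eta^2+48/\eta^2$. For the temporal case $t^*\leq-\beta_2R^2$ you only get $\tilde G_y(t^*)\geq -bt^*\geq b\beta_2R^2=12+48/\eta^2$, which does \emph{not} contradict $\tilde G_y(t^*)<b\alpha_2R^2$: heights in the sliver $[\,b\beta_2R^2,\,b\alpha_2R^2)$, of $v$-length $3\eta^2R^2/2$, may be realized at times $t^*\leq-\beta_2R^2$, outside the cylinder over which \eqref{eq-abp-type} integrates. (Relatedly, your displayed chain $-av\geq 32aR^2/\eta^2-b\alpha_2R^2>1+\tfrac{9}{2}aR^2+4bR^2$ is false --- $\tfrac{16}{\eta^2}-12-3\eta^2<10+\tfrac{48}{\eta^2}$ --- and the contradiction in the spatial case is with the endpoint $v>v_{\min}$, i.e.\ $\tilde G_y(t^*)<b\alpha_2R^2<64/\eta^2$, not with $v\leq v_{\max}$.) The fix is to trim the interval to $(v_{\min}+\tfrac{3}{2}\eta^2R^2,\,v_{\max}]$, equivalently to require $\tilde G_y(t^*)<b\beta_2R^2=12+48/\eta^2$; this still has length exactly $R^2$, both exclusions then go through, and it reproduces exactly the paper's choice of the interval $(A_\eta R^2,(A_\eta+1)R^2)$ with $A_\eta=5+24/\eta^2$. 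With that adjustment (and the correspondingly adjusted $M_\eta$) the proof is complete and coincides with the paper's.
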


\begin{figure}
  \centering
    \includegraphics[width=0.92\textwidth]{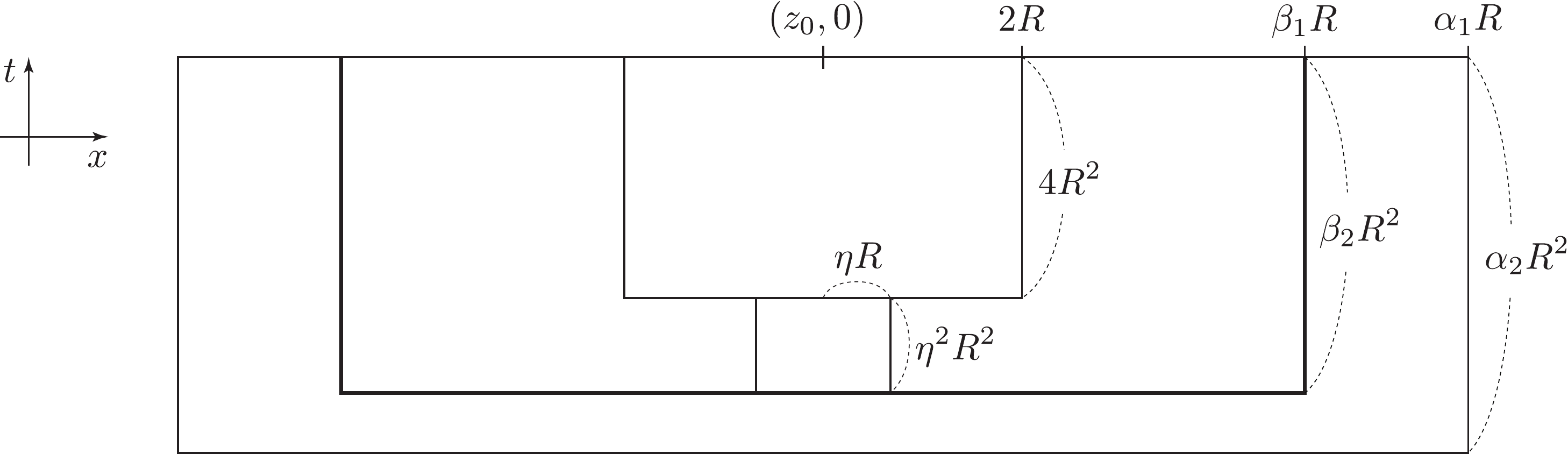}
      \caption{  $\alpha_1:=\frac{11}{\eta},\,\, \alpha_2:=4+\eta^2+\frac{\eta^4}{4}, 
      \,\,\beta_1:=\frac{9}{\eta},\,\,\beta_2:=4+\eta^2$}\label{boxes}
\end{figure}

\begin{proof}

We  consider the parabolic contact set
$$\cA_{a,b}\left(\overline B_R(z_0);  K_{\al_1R,\,\al_2R^2}(z_0,0);u\right)\quad\mbox{ for  $\,a:=\frac{2}{R^{2}}\,$ and $\,b:=\frac{12}{\eta^2R^2},$} $$  which will be  denoted by $\cA$ for simplicity.
As in the proof of  \cite[Lemma 3.2]{KKL},  for any $\overline y\in B_R(z_0)$,  we define  
$$w_{\overline y}(x,t):=\frac{1}{2}R^2u(x,t)+\frac{1}{2}d_{\overline y}^2(x)-C_{\eta}t,\quad \,C_\eta:=\frac{b}{a}=\frac{6}{\eta^2}.$$
From the assumption \eqref{cond-abp},  we see that 
$$\inf_{K_{2R}(z_0,0)}w_{\overline y}\leq
\left( 5+\frac{24}{\eta^2}\right)R^2=:A_\eta R^2, $$
and $$w_{\overline y}\geq\left(6+\frac{24}{\eta^2}\right)R^2=(A_\eta +1)R^2\quad\mbox{on}\,\,\,K_{\alpha_1R,\,\alpha_2R^2}(z_0,0)\bs K_{\beta_1R,\,\beta_2R^2}(z_0,0). $$
Then we deduce  that  
for any $\displaystyle \left(\overline y, \overline h\right)\in B_R(z_0)\times\left(A_\eta R^2,(A_\eta +1)R^2\right)$,  
there exists a time  $\displaystyle \overline t\in\left(-\beta_2R^2,0\right)$ such that 
$$\overline h=\inf_{B_{\al_1R}(z_0)\times\left(-\al_2R^2,\overline t\right]}w_{\overline y}\, =\,w_{\overline y}\left(\overline x,\overline t\right),$$
where  the infimum is achieved at an interior point $\overline x\in B_{\beta_1 R}(z_0).$
This means that  $(\overline x,\overline t)$ is a parabolic contact point, i.e.,  $(\overline x,\overline t)\in\cA .$  
According to Lemma \ref{lem-WZ-jac}, we observe that 
$$\overline y=\exp_{\overline x}\left(\frac{1}{2}R^2\,\D u\left(\overline x, \overline t\right)\right),\quad\mbox{and }\quad \overline x\not\in\Cut(\overline y). $$

Now,  we define  the map $\phi:K_{\alpha_1R,\,\alpha_2R^2}(z_0,0)\to M  $ as 
  $$\phi(x,t):=\exp_x \left(\frac{1}{2}R^2\D u(x,t)\right),$$
and  the map $\Phi:K_{\alpha_1R,\,\alpha_2R^2}(z_0,0)\to M\times\R$    as
\[
\Phi(x,t):=\left(\phi(x,t), -\frac{1}{2}d^2\left(x,\phi(x,t) \right)-\frac{1}{2}R^2u(x,t)+C_\eta t\right).
\]
We also define  
\begin{align*}
\tilde\cA:=\left\{(x,t)\in K_{\beta_1R,\,\beta_2R^2}(z_0,0)\,:\, \exists y\in B_{R}(z_0)\,\,\,\mbox{s.t.}\,\,\,\displaystyle w_{y}(x,t)=\inf_{B_{\al_1R}(z_0)\times\left(-\al_2R^2,t\right]}w_y\leq(A_\eta+1 )R^2  \,\right\}.\end{align*}
According to  the argument above,  we have proved that for any $(y,s)\in B_R(z_0)\times\left(-(A_\eta +1)R^2,-A_\eta R^2\right)$,  there exists a point $(x,t)\in \tilde\cA$ such that
$(y,s)=\Phi(x,t),$  that is,    $$B_R(z_0)\times\left(-(A_\eta +1)R^2,-A_\eta R^2\right)\subset \Phi(\tilde\cA) .$$  Thus, the area formula provides
\begin{align}\label{eq-abp-area}
|B_R(z_0)|\cdot  R^2\leq \int_{M\times\R} \cH^0\left[\tilde \cA\cap\Phi^{-1}(y,s)\right] dV(y,s)= \int_{\tilde\cA}\Jac \Phi (x,t)dV(x,t).
\end{align}
We note that   \begin{align}\label{eq-abp-contact-set}
\tilde\cA \subset \cA\cap K_{\be_1R,\be_2R^2}(z_0,0)\cap \{u\leq M_\eta\}
\end{align}
 for $M_\eta:=2(A_\eta +1)$  since $\frac{1}{2}R^2u(x,t)\leq w_y(x,t) \leq (A_\eta +1)R^2$ for $(x,t)\in\tilde\cA.$ 

Next, we   claim that for $(x,t)\in\cA,$
\begin{equation}\label{eq-abp-jac-est}
\begin{split}
\Jac\Phi(x,t)&\leq\frac{1}{(n+1)^{n+1}} \sS^{n+1}\left( 2\al_1\sqrt{{\kappa}}\,R_0 \right)\left[\left\{\frac{nR^2}{2\lambda}\left({\frac{\lambda}{n}\La u-\p_tu}\right) +\frac{6n}{\lambda \eta^2}+n \sH\left (2\al_1\sqrt{ {\kappa} } \,R_0\right)\right\}^+\right]^{n+1}.
\end{split}\end{equation}
 From Lemma \ref{lem-WZ-jac},  if $(x,t)\in\cA,$  then we have
\begin{equation*}
\frac{R^2}{2}\D u(x,t) =-d_y(x)\D d_y(x)\quad\mbox{for }\,\,\,
y:=\phi(x,t)\in \overline B_R(z_0)
;\quad x\notin \Cut(y),
\end{equation*}
and hence
 \begin{equation}\label{eq-gradient-contact-pt}
\frac{R^2}{2}|\D u(x,t)|=d_y(x)\leq d(y,z_0)+d(z_0,x)\leq R+\al_1R\leq 2\al_1R_0.
\end{equation}
Using Lemma \ref{lem-Jac-normal_map} (with $\tilde\lambda=\lambda/n$) and \eqref{eq-gradient-contact-pt}, we deduce that for $(x,t)\in \cA,$
\begin{equation*}
\begin{split}
\displaystyle(n+1)&\Jac\Phi(x,t)^{\frac{1}{n+1}}\\&\leq\sS \left(\sqrt{\frac{(n-1)\kappa}{n}}\frac{R^2|\D u|}{2}\right)\left\{\frac{nR^2}{2\lambda}\left({\frac{\lambda}{n}\La u-\p_tu}\right) +\frac{6n}{\lambda \eta^2} +n \sH\left(\sqrt{\frac{(n-1)\kappa}{n}}\frac{R^2|\D u|}{2}\right)\right\}^+ \\
 &\leq 
\sS\left( 2\al_1\sqrt{{\kappa}}\,R_0 \right)\left\{\frac{nR^2}{2\lambda}\left({\frac{\lambda}{n}\La u-\p_tu}\right) +\frac{6n}{\lambda \eta^2}+n \sH\left (2\al_1\sqrt{ {\kappa} } \,R_0\right)\right\}^+,
\end{split}\end{equation*}
 since   $\sH(\tau)$ and $\sS(\tau)$ are nondecreasing for  $\tau\geq0.$   This proves \eqref{eq-abp-jac-est}.

Lastly, 
we shall show that  for $(x,t)\in \cA,$
\begin{equation}\label{eq-abp-la-pucci}
\frac{\lambda}{n} \La u\leq  \cM^-(D^2u)+\frac{2n\Lambda}{R^2}\sH\left( 2\al_1\sqrt{\kappa}\,R_0\right). 
\end{equation}
Indeed,  for $(x,t)\in \cA, $  
we recall Lemma \ref{lem-WZ-jac} again  to see 
$$D^2\left(u+\frac{1}{R^2}d_y^2\right)(x,t)\geq0 \quad\mbox{  for $y:=\phi(x,t)$} ;\quad x\notin \Cut(y),$$ 
i.e., the Hessian of $R^2 u+d_y^2$ at $(x, t)$ is positive semidefinite.  
From  Lemma \ref{lem-hess-dist-sqrd} and \eqref{eq-gradient-contact-pt}, it follows that  
$$D^2u(x,t)\geq -\frac{2}{R^2}D^2\left(\frac{1}{2}d_y^2\right)(x)\geq -\frac{2}{R^2}\sH\left(\sqrt{\kappa}\,d_y(x)\right)\,g_x  \geq -\frac{2}{R^2}\sH\left( 2\al_1\sqrt{\kappa}\,R_0\right)g_x.$$
Let $\mu_1$ be the largest eigenvalue of $D^2 u(x,t).$
If  $\mu_1\geq0,$  then  we have
\begin{align*}
\cM^-(D^2u(x,t))
&\geq \lambda \mu_1-(n-1)\Lambda \frac{2}{R^2}\sH\left( 2\al_1\sqrt{\kappa}R_0\right)\\
&\geq \frac{\lambda}{n} \La u-n\Lambda \frac{2}{R^2}\sH\left( 2\al_1\sqrt{\kappa}R_0\right).
\end{align*} 
If $\mu_1<0,$ then we have
\begin{align*}
\cM^-(D^2u(x,t))=\Lambda \La u &\geq -n\Lambda \frac{2}{R^2}\sH\left( 2\al_1\sqrt{\kappa}R_0\right)\geq \frac{\lambda}{n} \La u-n\Lambda \frac{2}{R^2}\sH\left( 2\al_1\sqrt{\kappa}R_0\right),
\end{align*} 
 which proves  \eqref{eq-abp-la-pucci}  for $(x,t)\in\cA.$ 
Therefore, the ABP-Krylov-Tso type estimate  \eqref{eq-abp-type} follows from \eqref{eq-abp-area}, \eqref{eq-abp-contact-set} \eqref{eq-abp-jac-est}  and \eqref{eq-abp-la-pucci}.
\end{proof}


As in \cite{KKL}, we modify the barrier function of \cite{W}  to construct a barrier function in the Riemannian setting.  First, we fix some constants that will be used frequently (see Figure \ref{boxes}); for a given $0<\eta<1$, 
$$\alpha_1:=\frac{11}{\eta},\,\alpha_2:=4+\eta^2+\frac{\eta^4}{4},\, \beta_1:=\frac{9}{\eta}\,\,\,\,\mbox{and}\,\,\,\beta_2:=4+\eta^2. $$   
\begin{lemma}\label{lem-barrier} 
  Assume that 
  $$\Sec\geq -\kappa\quad\mbox{on $M,$}\quad\mbox{for $\kappa\geq0. $}$$ 
Let $R_0>0$ and $0<\eta<1$.  For  $z_0\in M,$ and   $0<R\leq R_0, $ 
 there exists a continuous function $v_{\eta}(x,t)$ in $\displaystyle K_{\alpha_1R,\,\alpha_2R^2}(z_0,\beta_2R^2) 
 $,  which is smooth in $\left(M\bs \Cut (z_0)\right)\cap K_{\alpha_1R,\,\alpha_2R^2}(z_0,\beta_2R^2),$ such that  
\begin{enumerate}[(a)]
\item $v_{\eta}(x,t)\geq 0$ in $K_{\alpha_1R,\,\alpha_2R^2}(z_0, \beta_2R^2)\,\bs\,  K_{\beta_1R,\,\beta_2R^2} (z_0,\beta_2R^2)$, 
\item $v_{\eta}(x,t)\leq 0$ in $K_{2R}(z_0,\beta_2R^2)$,
\item {  $R^2\left\{\cM^+(D^2 v_{\eta})-\p_t v_\eta\right\}+\frac{12}{\eta^2}+2(n+1)\Lambda\sH\left( 2\al_1\sqrt{\kappa}\,R_0\right)\leq0 $} a.e. in $K_{\beta_1R, \,\beta_2R^2}(z_0, \beta_2R^2)\bs  K_{\frac{\eta}{2} R} (z_0, \frac{\eta^2}{4}R^2), $
\item $R^2\left\{\cM^+(D^2 v_{\eta})-\p_t v_\eta\right\}\leq C_{\eta}\quad a.e. $ in $K_{\beta_1R,\, \beta_2R^2}(z_0, \beta_2R^2)$,
\item  $v_{\eta}(x,t)\geq -C_{\eta}$ in $K_{\alpha_1R,\,\alpha_2R^2}(z_0,  \beta_2R^2)$,
\end{enumerate}
where   $\sH(\tau)=\tau\coth(\tau),$  
 and  the constant $C_{\eta}>0$  depends only on $\eta, n,\lambda,\Lambda, \sqrt{\kappa}R_0$ (independent of  $R$ and $z_0$).
\end{lemma}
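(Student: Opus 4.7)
The plan is to adapt the Euclidean Pucci barrier construction of Wang~\cite{W} to the Riemannian setting, mirroring the strategy carried out in~\cite{KKL} for the linear case. The natural substitution is $|x|^2 \leadsto d_{z_0}^2(x)$; the only Riemannian input needed in the Pucci computation is the upper Hessian bound
$D^2(\tfrac12 d_{z_0}^2) \leq \sH(\sqrt{\kappa}\,d_{z_0})\,g$
from Lemma~\ref{lem-hess-dist-sqrd}, which on $B_{\alpha_1 R}(z_0)$ is controlled by $\sH(\alpha_1\sqrt{\kappa}R_0)\,g$ and is what forces the curvature dependence of $C_\eta$.

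Specifically, I would seek $v_\eta$ of the radial form
\begin{equation*}
v_\eta(x,t) \;=\; M\,\Phi\!\Bigl(\tfrac{d_{z_0}^2(x)}{R^2},\ \tfrac{t}{R^2}\Bigr),
\end{equation*}
with $M = M(\eta, n, \lambda, \Lambda, \sqrt{\kappa}R_0)$ and $\Phi(\rho,s)$ a fixed smooth function that is strictly decreasing and strictly convex in $\rho$, with an upward drift in $s$. A convenient concrete choice is the model $\Phi(\rho,s) = (\rho + \sigma s + \tau)^{-p} - C_0$ with exponent $p$ comparable to $n\Lambda/\lambda$, the parameters $\sigma, \tau$ chosen as fixed powers of $\eta$, and $C_0$ tuned so that $\Phi \geq 0$ on $\{\rho \geq \beta_1^2\}$ while $\Phi \leq 0$ on $\{\rho \leq 4,\ s \geq \eta^2\}$. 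Properties (a), (b), (e) follow at once from evaluating $\Phi$ on the relevant level sets; note that $v_\eta$ depends only on the continuous function $d_{z_0}(x)$, so it extends continuously across $\Cut(z_0)$.

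For the PDE inequality, off $\Cut(z_0)$ the chain rule (with $r = d_{z_0}$) gives
\begin{equation*}
D^2 v_\eta \;=\; \tfrac{2M}{R^2}\,\Phi_\rho\cdot D^2\!\bigl(\tfrac12 d_{z_0}^2\bigr) + \tfrac{4M}{R^4}\,\Phi_{\rho\rho}\,r^2\,\nabla r\otimes\nabla r.
\end{equation*}
Since $\Phi_\rho<0$, $\Phi_{\rho\rho}>0$, $|\nabla r|=1$, and $D^2(\tfrac12 d_{z_0}^2)$ is bounded above by $\sH(\alpha_1\sqrt{\kappa}R_0)\,g$, monotonicity of $\cM^+$ (Lemma~\ref{lem-prop-pucci-op}) yields an upper bound for $R^2\cM^+(D^2 v_\eta)$ of the form $-c_1\lambda M|\Phi_\rho| + c_2\Lambda M\,\Phi_{\rho\rho}\,r^2/R^2$, plus a positive additive term proportional to $M\Lambda\,\sH(2\alpha_1\sqrt{\kappa}R_0)$. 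Subtracting $R^2\partial_t v_\eta = M\sigma\,\Phi_\rho$ and using that $\rho + \sigma s$ is bounded below by a universal function of $\eta$ on the complement of $K_{\eta R/2}(z_0,\eta^2R^2/4)$, one tunes $p$ and $\sigma$ so that the negative $|\Phi_\rho|$ contribution dominates by at least $12/\eta^2 + 2(n+1)\Lambda\,\sH(2\alpha_1\sqrt{\kappa}R_0)$, giving (c). Property (d) is then a global bound on $\cM^+(D^2 v_\eta) - \partial_t v_\eta$ arising from boundedness of $\Phi$ and its derivatives on the compact image of the cylinder, and (c), (d) extend to the full cylinder a.e.\ since $|\Cut(z_0)|=0$.

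The main obstacle is the simultaneous calibration of the parameters $(M, p, \sigma, \tau, C_0)$ so that the competing conditions---the sign requirements (a), (b), the outer bound (e), and the quantitative PDE inequality (c)---all hold with the Riemannian correction $\sH(\alpha_1\sqrt{\kappa}R_0)$ absorbed by the dominant $-c_1\lambda|\Phi_\rho|$ term and with the global bound $C_\eta$ in (d) depending only on $\eta, n, \lambda, \Lambda, \sqrt{\kappa}R_0$. This balancing is the one place where the Riemannian setting departs meaningfully from the Euclidean prototype, but it can be carried out explicitly by following the ordered sequence of choices in~\cite{W} and~\cite{KKL}.
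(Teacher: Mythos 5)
Your overall strategy --- a radial ansatz in $d_{z_0}^2(x)/R^2$ and $t/R^2$, the chain rule off $\Cut(z_0)$, the Hessian comparison of Lemma \ref{lem-hess-dist-sqrd} as the only Riemannian input, and continuity across the cut locus plus $|\Cut(z_0)|=0$ for the a.e.\ statements --- is exactly the paper's, which simply transplants the barrier of \cite[Lemma 3.22]{W} and \cite[Lemma 4.1]{KKL}. The gap is in the profile $\Phi$ itself; the one you propose cannot satisfy the required sign conditions, and no profile of the form $f(\rho+\sigma s)$ can. First, a function \emph{strictly decreasing} in $\rho$ with $\Phi\geq0$ on $\{\rho\geq\beta_1^2\}$ forces $\Phi(4,s)>\Phi(\beta_1^2,s)\geq0$, contradicting $\Phi\leq0$ on $\{\rho\leq4,\ s\geq\eta^2\}$ (recall $\beta_1^2=81/\eta^2>4$): the barrier must be \emph{increasing} in $\rho$, negative near the center and nonnegative on the outer annulus. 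Second, and more fundamentally, condition (a) is imposed on all of $K_{\alpha_1R,\alpha_2R^2}(z_0,\beta_2R^2)\setminus K_{\beta_1R,\beta_2R^2}(z_0,\beta_2R^2)$, which contains the initial time slab $B_{\alpha_1R}(z_0)\times(-\tfrac{\eta^4}{4}R^2,0]$ (since $\alpha_2=\beta_2+\tfrac{\eta^4}{4}$); your verification of (a) addresses only the spatial annulus. Writing $\ell=\rho+\sigma s$, the three points $(\rho,s)=(0,0)$, $(0,\eta^2)$, $(\beta_1^2,\beta_2)$ must receive the signs $+,-,+$, and one checks that for $\sigma>0$ this forces $\beta_1^2\leq\sigma(\eta^2-\beta_2)=-4\sigma<0$, a contradiction; for $\sigma\leq0$ the sign pattern is achievable only with $\Phi$ increasing in $\ell$, but then $-\partial_t v_\eta=-\tfrac{M\sigma}{R^2}\Phi_\ell>0$ and $D^2v_\eta$ is positive semidefinite near $\rho=0$ (the radial second derivative of $\tfrac12 d_{z_0}^2$ equals $1$, and $\Phi_{\ell\ell}$ is multiplied by $\rho$), so $\cM^+(D^2v_\eta)-\partial_tv_\eta>0$ at the spatial center for $s>\eta^2/4$ --- a region where (c) demands strict negativity.

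What is missing is the genuinely parabolic, self-similar structure of the barrier: negativity must emanate from a neighborhood of $(z_0,0^+)$ and spread to all of $B_{2R}(z_0)$ by time $\eta^2R^2$, while the function stays nonnegative on the initial slab away from the center. This requires level sets of the form $\rho/s=\mathrm{const}$, not $\rho+\sigma s=\mathrm{const}$. The paper therefore takes
$h(s,t)=-Ae^{-mt}\bigl(1-s/\beta_1^2\bigr)^l(4\pi t)^{-n/2}\exp(-\alpha s/t)$ and $\vp_\eta=h+\widetilde Ct$ with $\widetilde C=12/\eta^2+2(n+1)\Lambda\sH(2\alpha_1\sqrt{\kappa}R_0)$, setting $v_\eta(x,t)=\vp_\eta(d_{z_0}^2(x)/R^2,t/R^2)$: the factor $(4\pi t)^{-n/2}e^{-\alpha s/t}$ vanishes to infinite order as $t\to0^+$ for $s>0$ (giving (a) on the initial slab), the cutoff $(1-s/\beta_1^2)^l$ gives (a) on the annulus, the damping $e^{-mt}$ with $m$ large makes $-\partial_th=mh+\cdots$ strictly negative at the spatial center where the Hessian term is useless, and $\alpha\sim1/\lambda$, $l\sim n\Lambda\sH(2\alpha_1\sqrt{\kappa}R_0)/(\lambda\eta^2)$ absorb the curvature correction from Lemma \ref{lem-hess-dist-sqrd}. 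Your parameter-balancing paragraph would go through once the profile is replaced by this one (and once $\vp_\eta$ is arranged to be nondecreasing in $s$, which is needed later for the approximation across $\Cut(z_0)$ in Lemma \ref{lem-barrier-approx}); as written, the construction does not produce the stated barrier.
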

\begin{proof}
As in   \cite[Lemma 3.22]{W} and \cite[Lemma 4.1]{KKL}, we consider  $$h(s,t):=-Ae^{-mt}\left(1-\frac{s}{\beta_1^2}\right)^l\frac{1}{(4\pi t)^{n/2}}\exp\left(-\alpha\frac{s}{t}\right)\quad\mbox{for $t>0$,}$$ 
 and   define 
  $$\vp_\eta(s,t):= h(s,t)+\widetilde C t\quad\mbox{in $[0,\beta_1^2]\times[0,\beta_2]\bs [0,\frac{\eta^2}{4}]\times[0,\frac{\eta^2}{4}]$, }$$ where $\widetilde C:=12/\eta^2+2(n+1)\Lambda\sH\left( 2 \al_1\sqrt{\kappa}R_0\right),$  and  the positive constants $A, m, l, \al $ (depending only on  $\eta,n,\lambda,\Lambda, \sqrt{\kappa}R_0$) will be chosen
later.  
  Extending $\vp_\eta$ smoothly   in $[0, \alpha_1^2]\times[-\frac{\eta^4}{4}, \beta_2]$ to satisfy $(a)$ and $(e),$ we define
$$v_\eta(x,t) :=\vp_\eta\left(\frac{d^2_{z_0}(x)}{R^2},\frac{t}{R^2}\right) \quad\mbox{for }\,\,\,\,(x,t)\in K_{\alpha_1R,\,\alpha_2R^2}(z_0, \beta_2R^2) , $$ where $d_{z_0}$ is the  distance function to $z_0.$ We may assume that $\vp_\eta(s,t)$ is nondecreasing with respect to $s$ in $[0, \alpha_1^2]\times[-\frac{\eta^4}{4}, \beta_2].$

We recall that  
\begin{align*}
\left\langle D^2\left(d_{z_0}^2/2\right)(x)\cdot \xi,\xi\right\rangle=\left\langle d_{z_0}D^2d_{z_0}(x)\cdot \xi,\xi\right\rangle+\left\langle\D d_{z_0}(x),\xi\right\rangle^2,\quad\forall\xi\in T_xM,\,\,\, x\not\in \Cut(z_0),
\end{align*}
and  $$\cM^+\left(D^2\left(d_{z_0}^2/2\right)(x)\right)\leq n\Lambda \sH\left(\sqrt\kappa d_{z_0}(x)\right)\leq n\Lambda \sH\left(\al_1\sqrt\kappa R_0\right),\quad\forall  x\in B_{\beta_1 R}(z_0)\bs\Cut(z_0),
$$
from Lemma \ref{lem-hess-dist-sqrd}.
 Following  the proof of \cite[Lemma 4.1]{KKL}, and using Lemma \ref{lem-prop-pucci-op} $(a),$    we can select  positive constants $A,m,l,\al$,  depending only on $\eta, n,\lambda,\Lambda, \sqrt{\kappa}R_0,$ such that  $(b),(c),$ and $(d)$ hold.  For the details, we refer to the proof of   \cite[Lemma 4.1]{KKL} (see also  \cite[Lemma 3.22]{W}).
 \end{proof}

We   approximate the barrier function  $v_\eta$ by a sequence of  smooth functions as Cabr\'{e}'s approach in \cite{Ca} since $v_\eta(x,t)=\vp_\eta\left(\frac{d_{z_0}^2(x)}{R^2},\frac{t}{R^2}\right)$ is not smooth on  $\Cut(z_0).$  
 We note that the cut locus of $z_0$ is  closed and has measure zero.  It is not hard to verify the following lemma, and        refer to \cite[ Lemmas 5.3, 5.4]{Ca} for the elliptic case. 

\begin{lemma}\label{lem-barrier-approx}
 Let $z_0\in M,\, R>0$ and let $\vp:\R^+\times[0,T]\to\R$ be a smooth  function such that 
$\vp(s,t)$ is nondecreasing with respect to $s$ for any $t\in[0,T]$. Let $v(x,t):=\vp\left({d^2_{z_0}(x)} , t\right)$. Then there exist a smooth function $\psi$ on $M$ satisfying  
$$0\leq\psi\leq1\quad\mbox{on $M,$}\quad\psi\equiv1\quad\mbox{in $B_{\beta_1R}(z_0),$\quad and }\quad \supp\psi\subset B_{\frac{10}{\eta}R}(z_0), $$ and a sequence $\{w_k\}_{k=1}^\infty$ of smooth functions in $M\times[0,T] $  such that
\begin{equation*} 
\left\{
\begin{array}{ll}
 w_k\to \psi v\qquad &\mbox{uniformly in $M\times[0,T]$, }\\ 
 \p_tw_k\to \psi\p_tv\qquad &\mbox{uniformly in $M\times[0,T]$, }\\
 D^2 w_k\leq C  g_x\qquad &\mbox{in $M\times[0,T]$, }\\
 D^2w_k\to D^2v\qquad &\mbox{a.e. in $B_{\beta_1R}(z_0)\times[0,T]$, }
 \end{array}\right. \qquad \qquad
\end{equation*}
where the constant $C>0$ is independent of $k$.
\end{lemma}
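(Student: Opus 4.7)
The plan is to build $w_k$ by standard mollification of $\psi v$ inside local exponential charts, glued via a partition of unity, and then verify the four listed properties using that $\psi v$ is globally continuous on $M\times[0,T]$ and globally semi-concave in the spatial variable with a constant independent of $t$.

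First I would fix the cutoff $\psi$ with $0\le\psi\le 1$, $\psi\equiv 1$ on $B_{\beta_1R}(z_0)$ and $\supp\psi\subset B_{10R/\eta}(z_0)$. Since $d^2_{z_0}$ is Lipschitz and $\vp$ is smooth, $\psi v$ is continuous on $M\times[0,T]$, and since $\Cut(z_0)$ has measure zero, $\psi v$ is smooth almost everywhere. The crucial point is that $\vp(\cdot,t)$ is nondecreasing. Combined with Lemma \ref{lem-hess-dist-sqrd}, this yields that for every $(x,t)\in\supp\psi\times[0,T]$ and every unit $X\in T_xM$,
\[
\limsup_{r\to0}\frac{v(\exp_xrX,t)+v(\exp_x-rX,t)-2v(x,t)}{r^2}\le C_0,
\]
with a constant $C_0$ depending only on $\vp$, $R$ and the compact set $\supp\psi$, uniformly in $t$. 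Lemma \ref{lem-semiconcave} then gives that $v(\cdot,t)$ is semi-concave on $\supp\psi$ uniformly in $t\in[0,T]$; pulled back through any fixed normal coordinate chart, this says that $\psi v$ is a classically semi-concave function of the Euclidean spatial coordinates, with a constant independent of $t$.

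Next I would cover $\supp\psi$ by finitely many normal coordinate charts $(U_j,\Phi_j)$ with a subordinate partition of unity $\{\chi_j\}$, mollify $\chi_j\psi v$ in the Euclidean space variables by a standard compactly supported mollifier $\rho_{1/k}$ and independently in $t$ by a smooth mollifier of mass one supported in $(-1/k,1/k)$ (after extending $v$ smoothly in $t$ past the endpoints so no boundary effects appear), push forward, and sum to obtain $w_k$. Then uniform convergence $w_k\to\psi v$ on $M\times[0,T]$ follows from uniform continuity of $\psi v$ on $\supp\psi\times[0,T]$; uniform convergence $\p_t w_k\to\psi\,\p_t v$ follows the same way from joint continuity of $\p_t v(x,t)=(\p_t\vp)(d^2_{z_0}(x),t)$; the uniform bound $D^2 w_k\le C\,g$ follows because mollification of a classically semi-concave function in a fixed Euclidean chart preserves the semi-concavity constant, and the coordinate change back to $M$ contributes only a bounded correction through the smooth Christoffel symbols on $U_j$; finally, almost-everywhere convergence $D^2 w_k\to D^2 v$ on $B_{\beta_1R}(z_0)\times[0,T]$ follows because $v$ is smooth on the full-measure open set $(B_{\beta_1R}(z_0)\setminus\Cut(z_0))\times[0,T]$, and standard mollification converges in $C^2$ on compact subsets of any open set where the underlying function is smooth.

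The main obstacle I expect is the uniform Hessian upper bound across $\Cut(z_0)$, where $v$ need not be differentiable at all. The resolution is to avoid differentiating $v$ pointwise and instead derive the bound from the Aleksandrov--Bangert-style semi-concavity characterization of Lemma \ref{lem-semiconcave}; the monotonicity hypothesis on $\vp(\cdot,t)$ is precisely what is needed for the one-sided Hessian estimate of Lemma \ref{lem-hess-dist-sqrd} on $d^2_{z_0}$ to pass through the composition $v=\vp(d^2_{z_0},t)$ and yield semi-concavity of $v$ globally, including across the cut locus. Once that semi-concavity is established, the rest is standard mollification bookkeeping in the spirit of \cite[Lemmas 5.3, 5.4]{Ca}, with the additional parameter $t$ causing no essential change.
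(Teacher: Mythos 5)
Your proposal is correct and follows essentially the route the paper itself intends: the paper disposes of this lemma by citing Cabr\'e's approximation lemmas, i.e. uniform semi-concavity of $v=\vp(d_{z_0}^2,t)$ (obtained exactly as you do, by pushing the one-sided bound of Lemma \ref{lem-hess-dist-sqrd} through the composition using $\p_s\vp\geq0$) followed by mollification in charts with a partition of unity. Your handling of the cut locus, the uniform Hessian upper bound, and the a.e.\ $C^2$-convergence on the full-measure set where $v$ is smooth matches that argument, so no gap remains.
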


The following measure estimate of the  sublevel set  is obtained by 
applying  Lemma \ref{lem-abp-type} to $u+v_\eta$ with $v_\eta,$ constructed in Lemma \ref{lem-barrier} and   translated in time,  with the help of the approximation  lemma above.

\begin{prop} \label{lem-decay-est-1-step}  Assume that 
  $$\Sec\geq -\kappa\quad\mbox{on $M$},\quad\mbox{for $\kappa\geq0, $} $$ and  that   $F$ satisfies 
\eqref{Hypo1} with $F(0)=0.$ 
      Let   $0<\eta<1,$  $0<R\leq R_0,$ and $K_{\alpha_1R,\,\alpha_2R^2}(z_0, 4R^2)\subset K_{R_0}(x_0,t_0)\subset M\times\R.$   
Let $u$ be a smooth function on  $K_{\alpha_1R,\, \alpha_2R^2}(z_0,4R^2)$ such that $$F(D^2u)-\p_tu \leq f\quad\mbox{in}\quad K_{\be_1R,\, \be_2R^2}(z_0,4R^2), $$
$$u\geq 0\quad\mbox{in}\quad K_{\alpha_1R,\,\alpha_2R^2}(z_0, 4R^2)\bs  K_{\beta_1R,\,\beta_2R^2}(z_0,4R^2), $$
and $$\inf_{K_{2R}(z_0,4R^2)}u\leq1. $$
 Then, there exist uniform constants $M_\eta>1, 0<\mu_\eta<1$,  and $0<\e_\eta<1$ such that 
\begin{equation*}
\frac{\left|\left\{u\leq M_{\eta}\right\}\cap K_{\eta R}(z_0 ,0)\right|}{\left|K_{\be_1R,\, \be_2R^2}(z_0, 4R^2)\right|}\geq \mu_\eta,
\end{equation*}
provided 
\begin{equation*}
\left( \fint_{K_{\be_1R,\,\be_2R^2}(z_0,4R^2)} \left|\be_1^2R^2f^+\right|^{n\theta+1}\right)^{\frac{1}{n\theta+1}}\leq \e_\eta;\quad f^+:=\max(f,0),
\end{equation*}
where  {\color{black}$\theta:=1+\log_2\cosh (4\sqrt\kappa R_0),$}
and $ M_\eta>0,\, 0<\mu_\eta, \e_\eta <1 $ depend only on $\eta, n, \lambda,\Lambda$ and $\sqrt{\kappa}R_0.$  
\end{prop}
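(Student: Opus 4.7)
The plan is to add a time-shifted version of the barrier $v_\eta$ from Lemma~\ref{lem-barrier} to $u$ and apply the ABP--Krylov--Tso estimate (Lemma~\ref{lem-abp-type}) to the sum. Set $\tilde v_\eta(x,t) := v_\eta(x, t + \eta^2 R^2)$, so that $\tilde v_\eta$ is defined on $K_{\al_1 R,\al_2 R^2}(z_0, 4R^2)$ and the ``bad'' central region from property $(c)$ of Lemma~\ref{lem-barrier} becomes $\tilde K := K_{\eta R/2}(z_0, -3\eta^2 R^2/4)\subset K_{\eta R}(z_0,0)$. Since $v_\eta$ is non-smooth on $\mathrm{Cut}(z_0)$, I would approximate $\tilde v_\eta$ by smooth functions $\tilde w_k$ via Lemma~\ref{lem-barrier-approx} (with $D^2 \tilde w_k\leq C g$ uniformly and a.e.\ Hessian convergence on $B_{\be_1 R}(z_0)$). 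Put $\tilde u_k := u + \tilde w_k + \delta_k$ with $\delta_k\downarrow 0$ chosen so that properties $(a),(b)$ of the barrier yield $\tilde u_k \geq 0$ on the outer annulus $K_{\al_1 R,\al_2 R^2}\setminus K_{\be_1 R,\be_2 R^2}$ and $\inf_{K_{2R}(z_0,4R^2)}\tilde u_k \leq 1+o_k(1)$.

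\medskip
\noindent\textbf{ABP and splitting.} Lemma~\ref{lem-abp-type} applied to $\tilde u_k$ (with reference time $4R^2$) yields
\[
|B_R(z_0)|\,R^2 \,\leq\, \int_{\{\tilde u_k\leq M_\eta\}\cap K_{\be_1 R,\be_2 R^2}(z_0,4R^2)} \sS^{n+1}\bigl[\{I_k\}^+\bigr]^{n+1},
\]
where $I_k := \frac{R^2}{2\lambda}\bigl(\cM^-(D^2\tilde u_k) - \p_t \tilde u_k\bigr) + \frac{6}{\lambda\eta^2} + \frac{(n+1)\Lambda}{\lambda}\sH$. Subadditivity of $\cM^-$ (Lemma~\ref{lem-prop-pucci-op}(c)) together with $\cM^-(D^2 u)-\p_t u\leq f$ (which follows from \eqref{Hypo1'} and $F(0)=0$) bounds $I_k$ above by $\tfrac{R^2}{2\lambda}f + \tfrac{R^2}{2\lambda}\bigl(\cM^+(D^2\tilde w_k)-\p_t\tilde w_k\bigr) + \tfrac{6}{\lambda\eta^2} + \tfrac{(n+1)\Lambda}{\lambda}\sH$. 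On the exterior $A_2 := K_{\be_1 R,\be_2 R^2}(z_0,4R^2)\setminus \tilde K$, barrier property $(c)$ cancels exactly the constant terms in the limit $k\to\infty$, leaving $\{I_k\}^+\leq \frac{R^2}{2\lambda} f^+$; on $\tilde K$, property $(d)$ yields the crude bound $\{I_k\}^+\leq C_\eta(1+R^2 f^+)$ with $C_\eta$ depending only on $\eta,n,\lambda,\Lambda,\sqrt{\kappa}R_0$.

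\medskip
\noindent\textbf{Conclusion.} Splitting the ABP integral over $\tilde K$ and $A_2$, applying H\"older's inequality on $K_{\be_1 R,\be_2 R^2}$ to pass from $L^{n+1}$ to $L^{n\theta+1}$ (which is where the smallness hypothesis $\bigl(\fint |\be_1^2 R^2 f^+|^{n\theta+1}\bigr)^{1/(n\theta+1)}\leq \ve_\eta$ enters), and then invoking Bishop--Gromov (Theorem~\ref{lem-bishop}) to bound $|B_R(z_0)|/|B_{\be_1 R}(z_0)|$ from below by a positive constant $c_\eta = c_\eta(n,\eta,\sqrt{\kappa}R_0)$, yields
\[
c_\eta \,\leq\, C_\eta\,\frac{|\{\tilde u_k\leq M_\eta\}\cap \tilde K|}{|K_{\be_1 R,\be_2 R^2}(z_0,4R^2)|} + C_\eta\,\ve_\eta^{n+1}.
\]
Choosing $\ve_\eta$ small enough to absorb the last term into $c_\eta/2$ and letting $k\to\infty$ (using $\tilde w_k\to\tilde v_\eta$ uniformly and the lower bound $\tilde v_\eta\geq -C_\eta$ from property $(e)$) gives $\{u+\tilde v_\eta\leq M_\eta\}\subset\{u\leq M_\eta+C_\eta\}$; combined with $\tilde K\subset K_{\eta R}(z_0,0)$, this produces the desired measure estimate after redefining $M_\eta$.

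\medskip
\noindent\textbf{Main obstacle.} The delicate step is passing to the limit $k\to\infty$ inside the ABP integrand, which depends on $\cM^-(D^2\tilde u_k)$ and therefore on the Hessians of the non-smooth barrier. The uniform upper Hessian bound and the a.e.\ Hessian convergence supplied by Lemma~\ref{lem-barrier-approx}, combined with the sharp sign/size control furnished by barrier properties $(c),(d)$, should let Fatou's lemma transfer the smooth-case inequality to $u+\tilde v_\eta$; it is precisely this step that dictates the somewhat intricate form of the barrier in Lemma~\ref{lem-barrier} and the careful choice of its constants.
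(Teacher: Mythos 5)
Your proposal follows essentially the same route as the paper's proof: translate the barrier of Lemma \ref{lem-barrier} in time by $-\eta^2R^2$, approximate it by smooth functions via Lemma \ref{lem-barrier-approx}, apply the ABP--Krylov--Tso estimate of Lemma \ref{lem-abp-type} to the sum $u+w_k$, split the contact integral into the central box and its complement using barrier properties $(c)$--$(e)$ together with Lemma \ref{lem-prop-pucci-op} and \eqref{Hypo1'}, and conclude with H\"older's inequality and Bishop--Gromov. The one small correction: the limit $k\to\infty$ inside the ABP integrand goes the wrong way for Fatou (you need $\limsup_k\int \leq \int\lim$, not $\int\liminf\leq\liminf\int$), so one should invoke dominated convergence, which is available precisely because of the uniform bound $D^2w_k\leq Cg$ --- and this is what the paper does.
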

\begin{proof}
Let $v_\eta$ be the barrier function  as in Lemma \ref{lem-barrier} after translation in time (by $-\eta^2R^2$) and let $\{w_k\}_{k=1}^\infty$ be a sequence of smooth functions approximating $v_\eta$ from  Lemma \ref{lem-barrier-approx}. We notice that $u+v_\eta\geq0 $ in $K_{\alpha_1R, \,\alpha_2R^2}(z_0, 4R^2)\bs K_{\beta_1R, \,\beta_2R^2}(z_0, 4R^2)$ and $\displaystyle\inf_{K_{2R}(z_0,4R^2)} (u+v_{\eta})\leq1$. 
 We can apply Lemma  \ref{lem-abp-type} to  $u+w_k$  after a  slight   modification as in the proof of \cite[Lemma 4.3]{KKL},   
 and use the dominated convergence theorem to let $k$ go to $+\infty$ due to  Lemma \ref{lem-barrier-approx}. Thus  we obtain 
\begin{align*}
|B_R(z_0)|\cdot R^2\leq C_1\int_{\{u+v_\eta\leq M_\eta\}\cap K_{\beta_1R, \beta_2R^2}(z_0, 4R^2)}\left[\left\{R^2\left\{\cM^-(D^2u+D^2v_\eta)-\p_t(u+v_\eta)\right\}+ C_2 \right\}^+ \right]^{n+1},
 \end{align*}
where
$C_1:=\sS^{n+1}\left(2\sqrt{\kappa} R_0\right)/(2\lambda)^{n+1},\,\,$ and $ C_2:=12/\eta^2+2(n+1)\Lambda\sH\left( 2\sqrt{\kappa}\,R_0\right).$
Using      Lemma \ref{lem-prop-pucci-op},  \eqref{Hypo1'} and the properties $(c),(d)$ of $v_\eta$ in Lemma \ref{lem-barrier},  we have 
\begin{align*}
|B_R(z_0)|\cdot R^2
&\leq C_1\int_{E_1\cup E_2}\left[\left\{R^2\left\{\cM^- (D^2u )-\p_tu\right\}+R^2\left\{\cM^+ (D^2v_\eta )-\p_tv_\eta\right\}+ C_2 \right\}^+ \right]^{n+1}\\
&\leq C_1\int_{E_1\cup E_2}\left[\left\{R^2\left\{F (D^2u )-\p_tu\right\}+R^2\left\{\cM^+ (D^2v_\eta )-\p_tv_\eta\right\}+ C_2 \right\}^+ \right]^{n+1}\\
&\leq C_1\int_{K_{\be_1R,\,\be_2R^2}(z_0,4R^2)}\left|R^2f^++(C_\eta+C_2)\,\chi_{E_2} \right|^{n+1},
 \end{align*}
where   $E_1:=\{u+v_\eta\leq M_\eta\}\cap \left(K_{\beta_1R,\,\beta_2R^2}(z_0,4R^2)\bs K_{\eta R}(z_0,0)\right) $ and $E_2:=\{u+v_\eta\leq M_\eta\}\cap K_{\eta R}(z_0,0)$.
Then, it follows that 
 \begin{align*}
\frac{|B_R(z_0)|\cdot R^2}{\left|K_{\be_1R,\,\be_2R^2}(z_0,4R^2)\right| } 
&\leq C_3\fint_{K_{\be_1R,\,\be_2R^2}(z_0,4R^2)}\left|\be_1^2R^2f^++\chi_{E_2} \right|^{n+1}\\
&\leq C_3\left( \fint_{K_{\be_1R,\,\be_2R^2}(z_0,4R^2)} \left|\be_1^2R^2f^+\right|^{n\theta+1}\right)^{\frac{n+1}{n\theta+1}}
+C_3\frac{\left|E_2\right|^\frac{n+1}{n\theta+1}}{{\left|K_{\be_1R,\,\be_2R^2}(z_0,4R^2)\right|^{\frac{n+1}{n\theta+1}} } }
 \end{align*}
for {\color{black}$\theta:=1+\log_2\cosh (4\sqrt\kappa R_0)\geq 1,$}  where a uniform constant $C_3>0$  depending only on $\eta, n,\lambda,\Lambda$ and $\sqrt{\kappa}R_0$   may change from line to line. Therefore, Bishop-Gromov's   Theorem \ref{lem-bishop}
implies that \begin{align*}
\frac{\left|E_2\right|^\frac{n+1}{n\theta+1}}{{\left|K_{\be_1R,\,\be_2R^2}(z_0,4R^2)\right|^{\frac{n+1}{n\theta+1}} } }
+\left( \fint_{K_{\be_1R,\,\be_2R^2}(z_0,4R^2)} \left|\be_1^2R^2f^+\right|^{n\theta+1}\right)^{\frac{n+1}{n\theta+1}}
&\geq \frac{1}{C_3} \frac{|B_R(z_0)|\cdot R^2}{\left|K_{\be_1R,\,\be_2R^2}(z_0,4R^2)\right| } \\
&\geq \frac{1}{C_3}\frac{1}{\cD}\left(\frac{1}{\be_1}\right)^{\log_2\cD}\frac{1}{\be_2}=: 2\mu_\eta^\frac{n+1}{n\theta+1}
 \end{align*}
for {\color{black}$\cD:=2^{n}\cosh^{n-1}(2\sqrt\kappa R_0).$}  
 By selecting $\e_\eta:=\mu_\eta^{\frac{1}{n\theta+1}},$ we conclude that 
\begin{align*}
\mu_\eta \leq \frac{\left|\left\{u+v_\eta\leq M_\eta\right\}\cap K_{\eta R }(z_0,0)\right| }{\left|K_{\be_1R,\,\be_2R^2}(z_0,4R^2)\right| }
 \leq \frac{\left|\left\{u \leq \tilde M_\eta\right\}\cap K_{\eta R }(z_0,0)\right| }{\left|K_{\be_1R,\,\be_2R^2}(z_0,4R^2)\right| }
 \end{align*}
 for   $\tilde M_\eta:=M_\eta+C_\eta$ depending only on $\eta,n,\lambda,\Lambda$ and $\sqrt{\kappa}R_0$ since $v_\eta\geq -C_\eta$ in $K_{\be_1R,\,\be_2R^2}(z_0,4R^2)$ from Lemma \ref{lem-barrier}.
  \end{proof}
  
  \begin{remark}\label{rmk-Harnack-const}  
 {\rm 
 Let         $n,\lambda, \Lambda,$   and $\eta\in(0,1)$  be given, where  $\eta$   is fixed as a universal  constant in the sequel. Constructing  the  barrier function  in Lemma \ref{lem-barrier},  we have chosen   a  uniform constant  $C_\eta=C_\eta\left(\sqrt{\kappa}R_0\right)$   such that
  $$C_{\eta}\left(\sqrt{\kappa}R_0\right)\gtrsim \exp\left(  c\left(1+\sqrt{\kappa}R_0\right)^2\right), $$ for large $  \sqrt{\kappa}R_0,$
 where      $c>0$ is  a universal constant  and we notice that $\sH(\tau)\leq 1+\tau$ for $\tau\geq0.$  
 Due to this choice of $C_{\eta}\left(\sqrt{\kappa}R_0\right),$ 
    the positive constants $M_\eta\left(\sqrt{\kappa}R_0\right), \mu_\eta\left(\sqrt{\kappa}R_0\right),$  $ \e_\eta\left(\sqrt{\kappa}R_0\right) $  in  
  Proposition \ref{lem-decay-est-1-step}   have been  selected so   that for large $\sqrt{\kappa}R_0>1, $ 
   $$M_\eta\left(\sqrt{\kappa}R_0\right)\gtrsim \exp\left(  c\left(1+\sqrt{\kappa}R_0\right)^2\right),$$   
  $$ \mu_ \eta\left(\sqrt{\kappa}R_0\right) \lesssim \exp\left(-  c\left(1+\sqrt{\kappa}R_0\right)^3\right) $$ and  
   $$ \e_ \eta\left(\sqrt{\kappa}R_0\right) \lesssim \exp\left(-  c\left(1+\sqrt{\kappa}R_0\right)^2\right). $$ 
   Once Proposition \ref{lem-decay-est-1-step} is established,   a priori Harnack estimate follows from  the  same procedure as in \cite{KKL}  using  Bishop and Gromov's volume comparison theorem. 
Thus an examination of the procedure  
asserts    
 that the  uniform constant 
  $C_H=C_H\left(\sqrt{\kappa}R_0\right)$ in Theorems  \ref{thm-PHI} and \ref{thm-weak-PHI}   grows      faster  than   $  \exp \left(1+ {\kappa}R_0^2\right)$ as $\sqrt{\kappa}R_0\to+\infty$. In fact, $C_H(\sqrt{\kappa}R_0)$ behaves like the exponential composed with a polynomial.  
It is a    rough Harnack estimate  compared to    the results of      \cite{Y1,Y2, BQ}       which  obtained   differential Haranck   estimates  for the heat equation  on a Rimannian manifold $M$ with Ricci curvature bounded by   $-\kappa$ ($\kappa\geq0$)   along the line of Li and Yau \cite{LY}.      A sharp Li-Yau type  Harnack  inequality for  the heat equation on such a manifold   \cite{LX} states that for  any $x_1,x_2\in M$ and $0<t_1<t_2<+\infty$
  $$u(x_1,t_1)\leq u(x_2,t_2)\left(\frac{t_2}{t_1}\right)^{\frac{n}{2}}\exp\left(\frac{d^2(x_2,x_1)}{4(t_2-t_1)}\left(1+\frac{1}{3}\kappa(t_2+t_1)\right)+\frac{n}{4}\kappa(t_2-t_1)\right),$$ 
  from which  a  sharp Haranck   constant    $C_H(\sqrt{\kappa}R_0)$   for the heat equation   has asymptotic growth rate of  $ \exp \left(   1+ {\kappa}R_0^2\right)$ as $\sqrt{\kappa}R_0$ tends to infinity.   


}
 \end{remark}

 \subsection{Parabolic Harnack inequalities for viscosity solutions}
  Now we  prove  Proposition \ref{lem-decay-est-1-step-viscosity}  which   is a
  counterpart of Proposition \ref{lem-decay-est-1-step}  for viscosity solutions,  and a 
   key ingredient in proving Theorems \ref{thm-PHI} and \ref{thm-weak-PHI}. 
  

 \begin{prop} \label{lem-decay-est-1-step-viscosity} 
   Assume that 
  $$\Sec\geq -\kappa\quad\mbox{on $M,$}\quad\mbox{for $\kappa\geq0,$} $$
  and  that   $F$ satisfies \eqref{Hypo1} with $F(0)=0.$ 
  Let  $0<\eta<1,$ $0<R\leq R_0,$ and $K_{\alpha_1R,\,\alpha_2R^2}(z_0, 4R^2)\subset K_{R_0}(x_0,t_0)\subset M\times\R.$ 
     For  $f\in C\left( K_{R_0}(x_0,t_0)\right),$ let $u\in C\left(  K_{R_0}(x_0,t_0)\right)$ be a viscosity supersolution of     $$F(D^2u)-\p_tu = f\quad\mbox{in}\,\,\,\,  K_{\alpha_1R,\, \alpha_2R^2}(z_0,4R^2), $$ such that 
$$u\geq 0\quad\mbox{in}\quad K_{\alpha_1R,\,\alpha_2R^2}(z_0, 4R^2)\bs  K_{\beta_1R,\,\beta_2R^2}(z_0,4R^2), $$
and $$\inf_{K_{2R}(z_0,4R^2)}u\leq1. $$
 Then, there exist uniform constants $M_\eta>1, 0<\mu_\eta<1$,  and $0<\e_\eta<1$ such that 
\begin{equation*}
\frac{\left|\left\{u\leq M_{\eta}\right\}\cap K_{\eta R}(z_0 ,0)\right|}{\left|K_{\alpha_1R,\, \alpha_2R^2}(z_0, 4R^2)\right|}\geq \mu_\eta,
\end{equation*}
provided 
\begin{equation}\label{cond-f}
\left(\fint_{K_{R_0}(x_0,t_0)} \left|R_0^2f^+\right|^{n\theta+1}\right)^{\frac{1}{n\theta+1}}\leq \e_\eta,
\end{equation}
where   
 $\theta:=1+\log_2\cosh (4\sqrt\kappa R_0), $ 
and $ M_\eta>0,\, 0<\mu_\eta, \e_\eta <1 $ depend only on $\eta, n, \lambda,\Lambda$ and $\sqrt{\kappa}R_0.$  \end{prop}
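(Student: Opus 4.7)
The strategy is to reduce the viscosity statement to the a priori (smooth) version in Proposition \ref{lem-decay-est-1-step} by a double regularization: first inf-convolution, then mollification, and finally pass to the limit.

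\textbf{Step 1: Set up the inf-convolution.} Choose a bounded open set $\Omega \subset M$ and times $T_0 < T_1 < T_2$ such that the cylinder of interest $K_{\alpha_1 R,\alpha_2 R^2}(z_0,4R^2) \subset H\times(T_1,T_2]$ and $\overline H \times [T_1,T_2]\subset \Omega\times(T_0,T_2]\subset K_{R_0}(x_0,t_0)$ for some open $H$ with $\overline H\subset\Omega$. Form the inf-convolution $u_\ve$ with respect to $\Omega\times(T_0,T_2]$. By Lemmas \ref{lem-visc-u-u-e-0-p} and \ref{lem-u-e-alexandrov-p}, $u_\ve$ is Lipschitz, semi-concave, and $u_\ve \uparrow u$ uniformly on $\overline\Omega\times[T_0,T_2]$. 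By Lemma \ref{prop-u-u-e-superj-p}, for all sufficiently small $\ve>0$, the function $u_\ve$ is a viscosity supersolution of $F(D^2 u_\ve)-\partial_t u_\ve = f_\ve$ in $H\times(T_1,T_2]$ and the same inequality holds a.e., where $f_\ve \to f$ uniformly on compact subsets as $\ve\to 0^+$.

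\textbf{Step 2: Mollify $u_\ve$.} Apply Lemma \ref{lem-u-e-alexandrov-p}(c) to produce smooth functions $w_k$ on $M\times(-\infty,T_2]$ with $w_k \to \psi u_\ve$ uniformly, $|\nabla w_k|+|\partial_t w_k|\leq C$, $D^2 w_k\leq Cg$, and both $\partial_t w_k\to \partial_t u_\ve$ and $D^2 w_k\to D^2 u_\ve$ a.e.\ on the sub-cylinder containing $K_{\alpha_1R,\alpha_2 R^2}(z_0,4R^2)$. Define $\tilde f_k:=\bigl(F(D^2 w_k)-\partial_t w_k\bigr)^+$. Since the Hessian of $w_k$ is bounded above uniformly in $k$ and $F$ is uniformly elliptic, $\tilde f_k$ is uniformly bounded, and by the a.e.\ convergence together with the pointwise inequality $F(D^2 u_\ve)-\partial_t u_\ve\leq f_\ve$ a.e., we obtain $\limsup_{k\to\infty}\tilde f_k \leq f_\ve^+$ a.e. By dominated convergence,
\[
\limsup_{k\to\infty}\left(\fint_{K_{R_0}(x_0,t_0)}\bigl|R_0^2\tilde f_k\bigr|^{n\theta+1}\right)^{\frac{1}{n\theta+1}} \leq \left(\fint_{K_{R_0}(x_0,t_0)}\bigl|R_0^2 f_\ve^+\bigr|^{n\theta+1}\right)^{\frac{1}{n\theta+1}},
\]
and the right side is as close to $(\fint |R_0^2 f^+|^{n\theta+1})^{1/(n\theta+1)}$ as we like for $\ve$ small, by uniform convergence $f_\ve\to f$.

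\textbf{Step 3: Apply the smooth case.} For fixed small $\ve$ and large $k$, the smooth function $w_k$ satisfies $F(D^2 w_k)-\partial_t w_k\leq \tilde f_k$ pointwise. From the hypotheses $u\geq 0$ on the annular region and $\inf_{K_{2R}(z_0,4R^2)}u\leq 1$, together with uniform convergences $w_k\to \psi u_\ve\to u$, we arrange that $w_k$ (up to negligible perturbations that can be absorbed into $\tilde f_k$ via a sub-universal constant) satisfies the hypotheses of Proposition \ref{lem-decay-est-1-step}. Applying that proposition to $w_k$ gives
\[
\frac{|\{w_k\leq M_\eta\}\cap K_{\eta R}(z_0,0)|}{|K_{\alpha_1 R,\alpha_2 R^2}(z_0,4R^2)|}\geq \mu_\eta,
\]
provided the averaged $L^{n\theta+1}$ norm of $\tilde f_k$ (scaled by $R_0^2$) is below $\e_\eta$, which holds for $\ve$ and $k$ in the right regime thanks to Step 2 and the assumption \eqref{cond-f} (with a universal constant factor to absorb the small excess).

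\textbf{Step 4: Pass to the limit.} Using $w_k\to \psi u_\ve$ uniformly, then $u_\ve\uparrow u$ uniformly, and the standard fact that sublevel sets $\{v\leq M\}$ are upper semicontinuous in $v$ under uniform convergence (so $\limsup_{k}|\{w_k\leq M\}\cap K|\leq |\{u_\ve\leq M+\delta\}\cap K|$ for any $\delta>0$, and similarly in $\ve$), we deduce the same lower bound, with a slightly enlarged $M_\eta$, for $u$ itself.

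\textbf{Main obstacle.} The delicate point is controlling the right-hand side $\tilde f_k$ produced by the mollification: one must show that the error introduced by replacing $D^2 u_\ve$ by $D^2 w_k$ does not spoil the $L^{n\theta+1}$ smallness hypothesis. This relies crucially on the uniform upper Hessian bound $D^2 w_k\leq C g$ (which converts the one-sided a.e.\ convergence into a dominated convergence of the \emph{positive part} $\tilde f_k$) and on the pointwise viscosity inequality upgraded to an a.e.\ inequality via semi-concavity and Aleksandrov--Bangert. Everything else (geometric setup, passage to the limit in sublevel sets) is routine given the tools assembled in Sections \ref{sec-pre} and \ref{sec-supinf-conv}.
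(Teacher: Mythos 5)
Your strategy---inf-convolution, mollification via Lemma \ref{lem-u-e-alexandrov-p}(c), application of the a priori Proposition \ref{lem-decay-est-1-step} to the smooth approximations, and passage to the limit using the uniform upper Hessian bound to dominate the positive part of the residual---is exactly the paper's proof (after the standard reduction to $F=\cM^-$ via \eqref{Hypo1'}, which you should state). There is, however, one concrete misconfiguration in Step 1 that would break the argument as written: you ask for $K_{\alpha_1R,\alpha_2R^2}(z_0,4R^2)\subset H\times(T_1,T_2]$ with $\overline H\times[T_1,T_2]\subset\Omega\times(T_0,T_2]$, so that $\Omega\times(T_0,T_2]$ strictly contains $K_{\alpha_1R,\alpha_2R^2}(z_0,4R^2)$; but Lemma \ref{prop-u-u-e-superj-p} requires $u$ to be a viscosity supersolution on all of $\Omega\times(T_0,T_2]$, and the hypothesis supplies this only on $K_{\alpha_1R,\alpha_2R^2}(z_0,4R^2)$, so your choice of domains is inadmissible. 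The fix is dictated by Proposition \ref{lem-decay-est-1-step} itself, which needs the differential inequality only on the inner box $K_{\beta_1R,\beta_2R^2}(z_0,4R^2)$ (the outer annulus enters only through the sign condition $u\ge0$, which requires no equation): take $H\times(T_1,T_2]:=K_{\beta_1R,\,\beta_2R^2}(z_0,4R^2)$ and $\Omega\times(T_0,T_2]$ an intermediate cylinder such as $K_{\tilde\alpha_1R,\,\tilde\alpha_2R^2}(z_0,4R^2)$ with $\tilde\alpha_i:=(\alpha_i+\beta_i)/2$, which is what the paper does. Two further small points you gloss over: (i) Proposition \ref{lem-decay-est-1-step} gives the density estimate with denominator $\left|K_{\beta_1R,\,\beta_2R^2}(z_0,4R^2)\right|$, so the stated denominator $\left|K_{\alpha_1R,\,\alpha_2R^2}(z_0,4R^2)\right|$ requires one application of the volume doubling property (Theorem \ref{lem-bishop}) at the end; and (ii) the perturbations of the sign and infimum conditions should be handled by applying Proposition \ref{lem-decay-est-1-step} to a normalized function such as $(w_k+2\delta)/(1+4\delta)$ rather than ``absorbed into $\tilde f_k$,'' since the a priori proposition's hypotheses $u\ge0$ and $\inf u\le1$ are not relaxable through the right-hand side.
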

 
   \begin{proof}
 It suffices to prove the proposition for $F=\cM^-$   owing to the uniform ellipticity \eqref{Hypo1'}. 
Setting $\tilde\al_1:=(\al_1+\be_1)/2,$ and $\tilde \al_2:=(\al_2+\be_2)/2, $
   we define 
 $$\Omega\times(T_0,T_2]:=K_{\tilde\al_1R,\,\tilde\al_2R^2}(z_0,4R^2),\,\,\,\,\mbox{and }\,\,\,\, H\times(T_1,T_2]:= K_{\be_1R,\,\be_2R^2}(z_0,4R^2).$$   
 We note that $u$ and $f $ belong to $C\left(\overline\Omega\times[T_0,T_2]\right),$ and  we denote by 
 $\omega$ the modulus of continuity of $u$ on $\overline\Omega\times[T_0,T_2],$ which is nondecreasing with $\omega(0+)=0.$

  For   $\ve>0,$ let $u_\ve$ be  the inf-convolution  of $u$ with respect to  $\Omega\times(T_0,T_2]$ as in \eqref{eq-def-inf-conv-p}. 
   According to  Lemma \ref{prop-u-u-e-superj-p},
   there exists $\ve_0>0$ such that if $0<\ve<\ve_0,$ then  $u_\ve$ satisfies 
\begin{equation*}
\cM^-(D^2u_\ve)-\p_tu_\ve\leq f_\ve \quad\mbox{ \,a.e. in $\,\,  K_{\be_1R,\,\be_2R^2}(z_0,4R^2),$}
\end{equation*} 
 where  $f_{\ve}$ is defined  as follows: 
  for $(x,t)\in K_{\be_1R,\,\be_2R^2}(z_0,4R^2),$
 $$f_{\ve}(x,t):= \sup_{\overline B_{2\sqrt{m\ve}}(x)\times \left[t-2\sqrt{m\ve},\min\left\{t+2\sqrt{m\ve},T_2\right\}\right]}f+2n\Lambda {\color{black}\kappa\,\omega}\left(2\sqrt{m \ve}\right)
 ;\,\,m:=||u||_{L^{\infty}\left(\overline K_{\tilde\al_1R,\,\tilde\al_2R^2}(z_0,4R^2)\right)},$$
 and we recall that $\cM^-$ is intrinsically uniformly continuous with respect to $x$ with $\omega_{\cM^-}\equiv0.$ 
Using  \eqref{eq-weight-int-para} and \eqref{cond-f},  we have    that
 \begin{equation*}
 \begin{array}{ll}
\left(\fint_{K_{\be_1R,\,\be_2R^2}(z_0, 4R^2)} \left|\be_1^2R^2f^+\right|^{n\theta+1}\right)^{\frac{1}{n\theta+1}} &\leq 2\left(\frac{\be_2}{\be_1^2}\right)^{-\frac{1}{n\theta+1}}\left(\fint_{K_{R_0}(x_0,t_0)} |R_0^2f^+|^{n\theta+1}\right)^{\frac{1}{n\theta+1}}\\&\leq 2\left(\frac{\be_2}{\be_1^2}\right)^{-\frac{1}{n\theta+1}}\e_\eta\leq 2\left(\frac{\be_2}{\be_1^2}\right)^{-\frac{1}{n +1}}\e_\eta=:\tilde\e_\eta,
  \end{array}
  \end{equation*} 
and hence  for small $\ve>0,$
\begin{equation}\label{eq-f-av-vol-ue}
 \begin{array}{ll}
 \left(\fint_{K_{\be_1R,\,\be_2R^2}(z_0, 4R^2)} \left|\be_1^2R^2\left\{\cM^-(D^2u_\ve)-\p_tu_\ve\right\}^+\right|^{n\theta+1}\right)^{\frac{1}{n\theta+1}}
 &\leq  \left(\fint_{K_{\be_1R,\,\be_2R^2}(z_0, 4R^2)} \left|\be_1^2R^2f_\ve^+\right|^{n\theta+1}\right)^{\frac{1}{n\theta+1}} \\
 &\leq  2\tilde\e_\eta,
\end{array}
\end{equation}
since $f_\ve$ converges uniformly to $f$ in     $K_{\be_1R,\,\be_2R^2}(z_0,4R^2).$   
For a fixed $\delta>0,$ 
we may assume that  for small $\ve>0, $
 $$u_\ve\geq- \delta  \quad\mbox{in}\quad K_{\tilde\alpha_1R,\,\tilde\alpha_2R^2}(z_0, 4R^2)\bs  K_{\beta_1R,\,\beta_2R^2}(z_0,4R^2),$$ and  $$\displaystyle\inf_{K_{2R}(z_0,4R^2)}  u_\ve\leq 1+\delta$$   since $u_\ve$ converges uniformly to $u$ in $K_{\tilde\al_1R,\,\tilde\al_2R^2}(z_0,4R^2)$ from  Lemma \ref{lem-visc-u-u-e-0-p}. 

Now, we   fix a small $\ve>0.$
According to Lemma \ref{lem-u-e-alexandrov-p}, $(c),$ there is a smooth function $\psi$ on $M\times(-\infty,T_2]$ satisfying    $0\leq\psi\leq1$ on $M\times (-\infty,T_2]$,
$$\psi\equiv1 \quad\mbox{in $\overline K_{\beta_1R,\,\beta_2R^2}(z_0,4R^2),\quad$ and }\quad\supp\psi\subset K_{\tilde\alpha_1R,\,\tilde\alpha_2R^2}(z_0, 4R^2), $$  and  we find a sequence $\{ w_k\}_{k=1}^\infty$ of  smooth functions on $M\times(-\infty,T_2]$ satisfying 
\begin{equation*} 
\left\{
\begin{array}{ll}
 w_k\to  \psi u_\ve\qquad &\mbox{uniformly in $M\times (-\infty,T_2]$  as $k\to+\infty,$}\\ 
 |\D w_k|+|\p_t w_k|\leq C \qquad &\mbox{in $M \times(-\infty,T_2]$, }\\
   \p_tw_k\to \p_tu_\ve\quad&\mbox{a.e. in $K_{\beta_1R,\,\beta_2R^2}(z_0,4R^2)$ as $k\to+\infty,$}\\
     D^2w_k\leq C  g\qquad &\mbox{in $M \times(-\infty,T_2]$, }\\
 D^2 w_k\to D^2u_\ve\quad&\mbox{a.e. in $K_{\beta_1R,\,\beta_2R^2}(z_0,4R^2)$ as $k\to+\infty,$}\\
 \end{array}\right. \qquad \qquad
\end{equation*}
where the constant $C>0$ is independent of $k$. 
For large $k,$ we may assume that
$$ w_k\geq -2\delta\,\,\,\mbox{in $K_{\alpha_1R,\,\alpha_2R^2}(z_0, 4R^2)\bs  K_{\beta_1R,\,\beta_2R^2}(z_0,4R^2)$},\quad\displaystyle\inf_{K_{2R}(z_0,4R^2)}\frac{w_k+2\delta}{1+4\delta}\leq 1,$$
 and 
 $$\left(\fint_{K_{\be_1R,\,\be_2R^2}(z_0, 4R^2)}
  \left|\be_1^2R^2\left\{\cM^-(D^2w_k)-\p_tw_k\right\}^+\right|^{n\theta+1}\right)^{\frac{1}{n\theta+1}}\leq 4\tilde\e_\eta,$$
where we used  the dominated convergence theorem to obtain the last estimate from \eqref{eq-f-av-vol-ue}.   

Selecting $\e_\eta>0$ small enough, we apply Proposition \ref{lem-decay-est-1-step} to $\displaystyle \frac{w_k+2\delta}{1+4\delta} $   (for large $k$) to  obtain  
  \begin{equation*}
  \frac{\left|\left\{w_k+2\delta\leq(1+4\delta) M_{\eta}\right\}\cap K_{\eta R}(z_0 ,0)\right|}{\left|K_{\be_1R,\, \beta_2R^2}(z_0, 4R^2)\right|}\geq \mu_\eta.
  \end{equation*} 
By letting $k\to+\infty, $ we have 
  \begin{equation*}
\frac{\left|\left\{u_\ve+\delta\leq (1+4\delta)M_{\eta}\right\}\cap K_{\eta R}(z_0 ,0)\right|}{\left|K_{\be_1R,\, \be_2R^2}(z_0, 4R^2)\right|}\geq \mu_\eta. 
\end{equation*}  
Since $u_\ve$ converges uniformly to $u$ in $K_{\tilde\alpha_1R,\, \tilde\alpha_2R^2}(z_0, 4R^2),$   
  we let $ \ve\to0$ and  $\delta\to0,$ and use Bishop-Gromov's Theorem \ref{lem-bishop} to deduce that 
  \begin{equation*}
\frac{\left|\left\{u\leq M_{\eta}\right\}\cap K_{\eta R}(z_0 ,0)\right|}{\left|K_{\alpha_1R,\, \alpha_2R^2}(z_0, 4R^2)\right|}\geq\frac{1}{\cD}\left(\frac{\be_1}{\al_1}\right)^{\log_2\cD}\frac{\be_2}{\al_2}\frac{\left|\left\{u\leq M_{\eta}\right\}\cap K_{\eta R}(z_0 ,0)\right|}{\left|K_{\be_1R,\, \be_2R^2}(z_0, 4R^2)\right|}\geq \frac{1}{\cD}\left(\frac{\be_1}{\al_1}\right)^{\log_2\cD}\frac{\be_2}{\al_2}\mu_\eta>0
\end{equation*} for {\color{black}$\cD:=2^{n}\cosh^{n-1}(4\sqrt\kappa R_0),$}   which finishes the proof.
 \end{proof}

 \textbf{Sketch of proof of Theorems  \ref{thm-PHI} and \ref{thm-weak-PHI}  }

  Theorems  \ref{thm-PHI} and \ref{thm-weak-PHI}  follow from   Proposition \ref{lem-decay-est-1-step-viscosity} and 
   a standard covering argument 
using  Bishop and Gromov's Theorem \ref{lem-bishop}.  
Indeed,  we first prove a decay estimate for the  distribution function of a viscosity supersolution $u$ to $\cM^-(D^2u)-\p_t u\leq f^+$ in $K_{2R}(x_0,4R^2).$ The main tools of the proof are     Proposition \ref{lem-decay-est-1-step-viscosity}, and  a parabolic version of the Calder\'on-Zygmund decomposition in \cite{Ch} according to   Bishop and Gromov's Theorem \ref{lem-bishop}.   Then, the weak Harnack inequality in Theorem \ref{thm-weak-PHI}   follows.   To complete the proof of Theorem \ref{thm-PHI} for  the viscosity solution $u\in\cS_P^*(f)$, we apply  
Proposition \ref{lem-decay-est-1-step-viscosity}, and  obtain the same decay estimate for  $w:=C_1-C_2u$ (for   $C_1,C_2>0$), which satisfies 
\begin{align*}
\cM^-(D^2w)-\p_tw= -C_2 \left\{\cM^+(D^2u)- \p_tu\right\}\leq C_2|f| 
\end{align*}
in the viscosity sense. 
For the detailed proofs, we refer to \cite{KKL,W}.

\section{Elliptic Harnack inequality }\label{sec-EHI}
  Using  the sup and  inf-convolutions, we  will prove  Harnack inequalities of continuous viscosity solutions to elliptic equations from a priori estimates. 
We recall viscosity solutions for uniformly elliptic operators. 

\begin{definition}[Viscosity sub and super-differentials, \cite{AFS}] Let $\Omega\subset M$ be open and 
let $u: \Omega\to\R$ be a lower semi-continuous function. We define the second order subjet of $u$ at $x\in \Omega $ by
\begin{align*}
\cJ^{2,-}u(x):=&\left\{\left(\D\varphi(x),D^2\varphi(x)\right)\in T_xM\times\Sym TM_x: \varphi\in C^2(\Omega), \right.\\
&\left. u-\varphi \,\,\,\mbox{has a local minimum at $x$}\right\}.
\end{align*}
If $(\zeta,A)\in \cJ^{2,-}u(x),$ $\zeta$ and $A$ are  called a first order subdifferential  and a second order subdifferential  of $u$ at $x,$ respectively.  

Similarly, for a upper semi-continuous function
 $u: \Omega\to \R,$ we define the second order superjet of $u$ at $x\in \Omega$ by
\begin{align*}
\cJ^{2,+}u(x):=&\left\{\left(\D\varphi(x),D^2\varphi(x)\right)\in T_xM\times\Sym TM_x: \varphi\in C^2(\Omega), \right.\\
&\left.u-\varphi \,\,\,\mbox{has a local maximum at $x$}\right\}. 
\end{align*}
\end{definition}
  
We quote the following  local   characterization of  $\cJ^{2,-}u$  from \cite[Proposition 2.2]{AFS}. 
\begin{lemma} Let $u: \Omega\to \R$ be a lower semi-continuous function and $x\in M.$
The following statements are equivalent:
\begin{enumerate}[(a)]
\item $(\zeta,A)\in \cJ^{2,-}u(x).$
\item  $u\left(\exp_x\xi\right)\geq u(x)+\langle\zeta,\xi\rangle+\frac{1}{2}\langle A\xi, \xi\rangle+o(|\xi|^2)\,\,$ as $\,\, T_xM\owns\xi\to0.$
\end{enumerate}
\end{lemma}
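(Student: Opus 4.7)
The plan is to mirror the proof of the parabolic analog (Lemma \ref{lem-char-superjet}) while exploiting the fact that normal coordinates at $x$ trivialize the Riemannian structure to first and second order, reducing each implication to its familiar Euclidean counterpart. Fix normal coordinates via $\psi := \exp_x : B_r(0) \subset T_xM \to B_r(x)$, and recall that at $x$ the metric components satisfy $g_{ij}(x)=\delta_{ij}$ and the Christoffel symbols $\Gamma^k_{ij}(x)$ vanish. Consequently, for any $C^2$ function $\varphi$, the Riemannian gradient and Hessian at $x$ coincide with the Euclidean gradient and Hessian of $\varphi\circ\psi$ evaluated at $0$.

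For the implication (a) $\Rightarrow$ (b), take a test function $\varphi\in C^2(\Omega)$ with $\D\varphi(x)=\zeta$, $D^2\varphi(x)=A$, and $u-\varphi$ having a local minimum at $x$. The Taylor expansion of $\varphi\circ\psi$ at $0\in T_xM$ gives
\[
\varphi(\exp_x\xi) = \varphi(x) + \langle\zeta,\xi\rangle + \frac{1}{2}\langle A\cdot\xi,\xi\rangle + o(|\xi|^2),
\]
and combining with $u(\exp_x\xi)-\varphi(\exp_x\xi)\geq u(x)-\varphi(x)$ for small $\xi$ yields (b).

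For (b) $\Rightarrow$ (a), I would transplant the problem to $T_xM$ via $\tilde u := u\circ\psi$, which satisfies $\tilde u(\xi)\geq \tilde u(0)+\langle\zeta,\xi\rangle+\tfrac{1}{2}\langle A\cdot\xi,\xi\rangle+o(|\xi|^2)$. Define the nonnegative function
\[
\rho(r) := \sup_{|\xi|\leq r}\left\{\frac{\big(\tilde u(0)+\langle\zeta,\xi\rangle+\tfrac{1}{2}\langle A\cdot\xi,\xi\rangle-\tilde u(\xi)\big)^+}{|\xi|^2}\right\},
\]
which is nondecreasing with $\rho(0+)=0$. Choose a $C^2$ function $h:[0,\infty)\to[0,\infty)$ with $h(0)=h'(0)=h''(0)=0$ and $h(r)\geq r^2(\rho(r)+r)$ for small $r\geq 0$ (for instance, set $\tilde\rho(r):=\rho(r)+r$ and let $h(r):=\int_0^r\int_0^s \tilde\rho(\tau)\,d\tau\,ds$, smoothed if needed). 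Then the function
\[
\tilde\varphi(\xi):=\tilde u(0)+\langle\zeta,\xi\rangle+\frac{1}{2}\langle A\cdot\xi,\xi\rangle - h(|\xi|)
\]
is $C^2$ near $0$ with $\tilde\varphi(0)=\tilde u(0)$, $\nabla\tilde\varphi(0)=\zeta$, $D^2\tilde\varphi(0)=A$, and by construction $\tilde u-\tilde\varphi\geq 0$ in a neighborhood of $0$. Pulling back, $\varphi:=\tilde\varphi\circ\psi^{-1}$ extends smoothly into $\Omega$ via a cutoff, and by the normal-coordinate identification of gradient and Hessian at $x$, we obtain $(\zeta,A)\in\cJ^{2,-}u(x)$.

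The only mild obstacle is confirming $C^2$-regularity of $\tilde\varphi$ at the origin despite the presence of $h(|\xi|)$: the requirements $h'(0)=h''(0)=0$ ensure that $h(|\xi|)$ is twice continuously differentiable at $0$ with vanishing Hessian there, so the Euclidean gradient and Hessian of $\tilde\varphi$ at $0$ reduce cleanly to $\zeta$ and $A$. All other steps are routine once the reduction to normal coordinates has been made.
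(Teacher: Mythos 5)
Your overall strategy is sound and is essentially the standard one: the paper does not prove this lemma at all (it simply quotes it from \cite[Proposition 2.2]{AFS}), and your reduction to normal coordinates, the $(a)\Rightarrow(b)$ direction via the second-order Taylor expansion of a $C^2$ test function along geodesics, and the cutoff extension back to $\Omega$ are all correct.

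There is, however, a concrete error in the construction of $h$ for $(b)\Rightarrow(a)$. You need $h(r)\geq r^2\tilde\rho(r)$ with $\tilde\rho$ nondecreasing, but the formula $h(r):=\int_0^r\int_0^s\tilde\rho(\tau)\,d\tau\,ds$ gives the \emph{reverse} inequality: since $\tilde\rho$ is nondecreasing, $\int_0^s\tilde\rho(\tau)\,d\tau\leq s\,\tilde\rho(s)$, hence $h(r)\leq\frac{r^2}{2}\tilde\rho(r)$, and if $\tilde\rho$ has a jump at $r$ the required domination genuinely fails. The standard repair is to average \emph{forward} rather than backward: set, say, $h(r):=r^2\int_1^2\int_1^2\int_1^2\tilde\rho(rstu)\,ds\,dt\,du$, which dominates $r^2\tilde\rho(r)$ because the integrand is everywhere $\geq\tilde\rho(r)$, is bounded above by $r^2\tilde\rho(8r)=o(r^2)$, and (after the three iterated averagings, each of which gains one degree of regularity from the monotone function $\tilde\rho$) is $C^2$ on $(0,\delta)$ with $h'(r)=o(r)$ and $h''(r)=o(1)$, so that $\xi\mapsto h(|\xi|)$ is indeed $C^2$ at the origin with vanishing gradient and Hessian. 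With this replacement the rest of your argument goes through; without it, the touching inequality $\tilde u\geq\tilde\varphi$ near $0$ is not established.
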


\begin{definition}[Viscosity solution] \label{def-visc-sol}
(i) Let $F:M\times \R\times TM\times \Sym TM\to \R$ and let $\Omega\subset M$ be an open set.
We say that a  upper semi-continuous function $u: \Omega\to\R$ is a viscosity subsolution of the equation $F(x,u,\D u, D^2u)=0$ on $\Omega$ if $$F\left(x,u(x),\zeta,A\right)\geq 0$$
for any $x\in\Omega$ and $(\zeta,A)\in \cJ^{2,+}u(x).$  
Similarly,  a  lower semi-continuous function $u: \Omega  \to\R $  is said to be 
a  viscosity supersolution of the equation $ F(x,u , \D u , D^2u)=0$ on $\Omega$ if  
$$F\left(x,u(x),\zeta,A\right)\leq 0$$ 
for any $x\in\Omega$ and $(\zeta,A)\in \cJ^{2,-}u(x).$  
We say $u$ is  a viscosity solution if $u$ is  both a viscosity subsolution and a viscosity supersolution.

(ii) Let $\Omega\subset M$  be open, and  
let $0<\lambda\leq \Lambda. $
 We denote by    $
\overline \cS_E\left(\lambda,\Lambda,f\right)$ a class of a viscosity  supersolution    $u\in C(\Omega)$  satisfying
$$\cM^-(D^2u)\leq f\quad\mbox{in $\Omega$} $$  in the viscosity sense. 
Similarly,  a class $\underline\cS_E\left(\lambda,\Lambda,f\right)$ of subsolutions  is defined as the set of     $u\in C(\Omega)$ such that 
$$\cM^+(D^2u)\geq f\quad\mbox{ in $\Omega$} $$ in the viscosity sense. 
We also define $$\cS_E^*\left(\lambda,\Lambda,f\right):=\overline \cS_E\left(\lambda,\Lambda,|f|\right)\cap \underline \cS_E\left(\lambda,\Lambda,-|f|\right).$$ We write shortly $\overline \cS_E(f),\underline \cS_E(f), $and $  \cS_E^*(f)$ for  $\overline \cS_E\left(\lambda,\Lambda,f\right),\underline \cS_E\left(\lambda,\Lambda,f\right),  $ and $\cS_E^*\left(\lambda,\Lambda,f\right),$   
 respectively. 
\end{definition}

As in the parabolic case, we use  the sup and inf-convolutions   to approximate continuous viscosity solutions.  Let $\Omega\subset M$  be  a bounded open set,
and    $u$ be a continuous function on $\overline \Omega.$    For $\ve>0,$  let $u_\ve$ denote 
  the inf-convolution  of $u$   (with respect to $ \Omega $),   defined as follows: for $x_0\in  \overline\Omega,$
\begin{equation*}
u_{\ve}(x_0 ):=\inf_{y\in  \overline\Omega  } \left\{ u(y) +\frac{1}{2\ve}d^2(y, x_0)\right\}.
 \end{equation*}
If $y_0$ is a point  to realize the above infimum, then we have 
$$u_\ve(x_0)=u(y_0)+\frac{1}{2\ve}d^2(y_0, x_0)\leq  u(y) +\frac{1}{2\ve}d^2(y, x_0)\quad \forall y\in \Omega,$$ which means 
$u$ has a  touching  paraboloid  $-\frac{1}{2\ve}d^2(y, x_0)+u(y_0)+\frac{1}{2\ve}d^2(y_0, x_0)$ at $y_0$ from below.

Now we state  the elliptic analogue of the results in  Section \ref{sec-supinf-conv}   without proof. 

\begin{lemma} \label{lem-visc-u-u-e-0}For $u\in C\left(\overline \Omega \right),$ let $u_\ve$ be  the inf-convolution of $u$ with respect to $  \Omega $ and let $x_0\in\overline\Omega .$
 \begin{enumerate}[(a)]
 \item If $0<\ve<\ve',$ then  $u_{\ve'}(x_0)\leq u_\ve(x_0) \leq u(x_0).$
 \item  There exists $\displaystyle \, y_0\in \overline \Omega $ such that $u_\ve(x_0)=u(y_0)+\frac{1}{2\ve}d^2(y_0, x_0).$
 \item $\displaystyle d^2(y_0,x_0) \leq 2\ve |u(x_0)-u(y_0)|\leq 4\ve ||u||_{L^\infty(\Omega)}.$
\item $u_\ve \uparrow u$ uniformly in $\overline\Omega.$ 
 \item $u_\ve$ is Lipschitz continuous in $\overline\Omega$:   for $x_0, x_1\in \overline\Omega,$
 $$|u_{\e}(x_0)-u_\ve(x_1)|\leq \frac{3}{2\ve}\diam(\Omega) d(x_0,x_1).$$
 \end{enumerate}
\end{lemma}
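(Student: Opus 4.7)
The plan is to mirror the proof of the parabolic analogue Lemma \ref{lem-visc-u-u-e-0-p} with obvious simplifications, since there is no time variable to deal with. Parts (a) and (b) come immediately from the definition: for (a), any competitor $y$ in the infimum defining $u_{\ve'}(x_0)$ satisfies $u(y)+\frac{1}{2\ve'}d^2(y,x_0)\leq u(y)+\frac{1}{2\ve}d^2(y,x_0)$ since $\ve<\ve'$, and the choice $y=x_0$ gives $u_\ve(x_0)\leq u(x_0)$; for (b), the function $y\mapsto u(y)+\frac{1}{2\ve}d^2(y,x_0)$ is continuous on the compact set $\overline\Omega$, so its infimum is attained at some $y_0\in\overline\Omega$.

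Next, (c) will follow from (a) and (b) by the chain
\[
\tfrac{1}{2\ve}d^2(y_0,x_0)=u_\ve(x_0)-u(y_0)\leq u(x_0)-u(y_0),
\]
which yields the first inequality; the second inequality is just $|u(x_0)-u(y_0)|\leq 2\|u\|_{L^\infty(\Omega)}$. Then (d) follows from (c) combined with uniform continuity of $u$ on the compact set $\overline\Omega$: if $\omega$ is a modulus of continuity of $u$, then $0\leq u(x_0)-u_\ve(x_0)\leq u(x_0)-u(y_0)\leq \omega(d(x_0,y_0))\leq \omega\bigl(2\sqrt{\ve\|u\|_{L^\infty(\Omega)}}\bigr)$, uniformly in $x_0$, which tends to $0$ as $\ve\to 0^+$. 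Monotonicity in $\ve$ is (a).

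For (e), I would follow the same triangle-inequality computation as in Lemma \ref{lem-visc-u-u-e-0-p}: given any competitor $y$ for $u_\ve(x_0)$, use $d(y,x_0)\leq d(y,x_1)+d(x_0,x_1)$ and expand the square to get
\[
u(y)+\tfrac{1}{2\ve}d^2(y,x_0)\leq u(y)+\tfrac{1}{2\ve}d^2(y,x_1)+\tfrac{1}{2\ve}\bigl(2\diam(\Omega)\,d(x_0,x_1)+d^2(x_0,x_1)\bigr),
\]
and then take the infimum over $y$ on the right and swap the roles of $x_0,x_1$ to obtain the Lipschitz bound $|u_\ve(x_0)-u_\ve(x_1)|\leq \frac{3}{2\ve}\diam(\Omega)\,d(x_0,x_1)$ for $x_0,x_1$ close enough (absorbing the lower-order $d^2$ term).

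There is no serious obstacle here; the argument is purely a consequence of compactness, continuity of $u$, and the triangle inequality, none of which involves any geometric input beyond the fact that $(M,d)$ is a metric space with $\overline\Omega$ compact. In particular, unlike Lemma \ref{lem-u-e-alexandrov-p} and Proposition \ref{lem-visc-u-u-e-p}, no curvature bound is needed for this lemma, and the proof is in fact strictly shorter than its parabolic counterpart because the time variable and the $|s-t_0|^2$ term drop out entirely.
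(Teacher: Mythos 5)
Your proposal is correct and follows exactly the route the paper intends: the lemma is stated without proof as the elliptic analogue of Lemma \ref{lem-visc-u-u-e-0-p}, and your argument is that parabolic proof with the time variable stripped out. The only small imprecision is in (e), where the restriction to ``$x_0,x_1$ close enough'' is unnecessary --- since $d(x_0,x_1)\leq\diam(\Omega)$ for all $x_0,x_1\in\overline\Omega$, one has $d^2(x_0,x_1)\leq\diam(\Omega)\,d(x_0,x_1)$ everywhere, so the stated Lipschitz bound holds globally on $\overline\Omega$, as in the paper's parabolic computation.
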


\begin{lemma}\label{lem-u-e-alexandrov}
  Assume that $$\Sec
  \geq -\kappa\quad\mbox{on $M,\quad$  for $\kappa\geq0$.}$$
For $u\in C\left(\overline\Omega\right),$ let $u_\ve$ be  the inf-convolution of $u$ with respect to $ \Omega, $ where $\Omega\subset M$ is a bounded open set.
\begin{enumerate}[(a)]
 \item $u_\ve$ is semi-concave in $\Omega.$  Moreover,  for almost every $x\in\Omega,$ $u_\ve$ is differentiable at $x,$ and there exists  the Hessian  $D^2u_\ve(x)$ (in the sense of Aleksandrov-Bangert's Theorem \ref{thm-AB})  such that
 \begin{equation*}
 u_\ve\left(\exp_x \xi \right)=u_\ve(x)+\left\langle\D u_\ve(x),\xi\right\rangle+\frac{1}{2}{\left\langle A(x)\cdot\xi,\xi\right\rangle}+o\left(|\xi|^2\right)
 \end{equation*} 
  as $\xi\in T_xM\to 0.$  
   \item $\displaystyle D^2u_\ve(x)\leq \frac{1}{\ve}{\sqrt{\kappa}\diam(\Omega)\coth\left(\sqrt \kappa \diam(\Omega)\right)}\, g_x\,\,\,$ a.e. in $\Omega.$
 \item  Let $H$  be an  open set such that $  \overline H\subset\Omega.$ 
  Then, there exist a smooth function $\psi$ on $M$ satisfying    $$\mbox{$0\leq\psi\leq1$ on $M$},\,\,\,\,\,\psi\equiv1 \,\,\,\,\mbox{in $\overline H\,\,\,\,\,$ and }\,\,\,\,\supp\psi\subset \Omega, $$ and a sequence $\{w_k\}_{k=1}^\infty$ of smooth functions on $M$  such that
\begin{equation*} 
\left\{
\begin{array}{ll}
 w_k\to \psi u_\ve\qquad &\mbox{uniformly in $M\,\,$  as $k\to+\infty,$}\\ 
 |\D w_k| \leq C \qquad &\mbox{in $M$, }\\
      D^2w_k\leq C  g\qquad &\mbox{in $M$, }\\
 D^2 w_k\to D^2u_\ve\quad&\mbox{a.e. in $H\,\,$ as $k\to+\infty,$}\\
 \end{array}\right. \qquad \qquad
\end{equation*}
where the constant $C>0$ is independent of $k$.
  \end{enumerate}

\end{lemma}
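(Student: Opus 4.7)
The plan is to follow the same strategy as in the proof of Lemma \ref{lem-u-e-alexandrov-p}, replacing parabolic neighborhoods by spatial balls and dropping the time variable. The key geometric input remains Lemma \ref{lem-hess-dist-sqrd}, which controls the second difference quotient of $d_y^2$ under the lower sectional curvature bound $-\kappa$.

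First, to prove (a), I would fix $x_0\in\Omega$ and, by Lemma \ref{lem-visc-u-u-e-0}(b), choose $y_0\in\overline\Omega$ realizing the infimum in the definition of $u_\ve(x_0)$. For any unit vector $\xi\in T_{x_0}M$ and small $r\in\R$, the definition of the inf-convolution directly yields
\[
u_\ve(\exp_{x_0}r\xi)+u_\ve(\exp_{x_0}-r\xi)-2u_\ve(x_0)\leq \frac{1}{2\ve}\left\{d_{y_0}^2(\exp_{x_0}r\xi)+d_{y_0}^2(\exp_{x_0}-r\xi)-2d_{y_0}^2(x_0)\right\}.
\]
Dividing by $r^2$ and taking $\limsup_{r\to 0}$, Lemma \ref{lem-hess-dist-sqrd} combined with the monotonicity of $\tau\coth\tau$ and the bound $d_{y_0}(x_0)\leq\diam(\Omega)$ gives the upper bound $\frac{1}{\ve}\sqrt\kappa\diam(\Omega)\coth(\sqrt\kappa\diam(\Omega))$ uniformly in $\xi$. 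Since $u_\ve$ is Lipschitz on $\overline\Omega$ by Lemma \ref{lem-visc-u-u-e-0}(e), Lemma \ref{lem-semiconcave} then implies that $u_\ve$ is semi-concave on $\Omega$, and the existence of the Hessian a.e. with the required Taylor expansion follows from the Aleksandrov--Bangert Theorem \ref{thm-AB}. Assertion (b) is an immediate consequence: inserting the Taylor expansion into the semi-concavity inequality above forces
\[
\langle D^2u_\ve(x)\cdot\xi,\xi\rangle\leq \frac{1}{\ve}\sqrt\kappa\diam(\Omega)\coth(\sqrt\kappa\diam(\Omega))
\]
for all unit $\xi\in T_xM$ at a.e. $x\in\Omega$.

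For (c), I would mimic the construction in the proof of Lemma 5.3 in \cite{Ca}: choose a cutoff $\psi\in C_c^\infty(\Omega)$ with $0\leq\psi\leq1$ and $\psi\equiv1$ on $\overline H$, cover $\supp\psi$ by finitely many normal coordinate charts, and mollify $\psi u_\ve$ locally via a partition of unity. The uniform Lipschitz bound from Lemma \ref{lem-visc-u-u-e-0}(e) yields the uniform bound on $|\D w_k|$; the one-sided upper Hessian bound from (b) survives mollification with nonnegative kernels, giving $D^2w_k\leq C\,g$; and a.e. Hessian convergence on $H$ follows from the a.e. second-order differentiability established in (a) by the standard argument for Aleksandrov-type functions under uniform one-sided Hessian bounds.

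The main obstacle, as in the parabolic case, is that the semi-concavity estimate must be intrinsically geometric rather than coordinate-based, and this is precisely where the sectional curvature lower bound enters through Lemma \ref{lem-hess-dist-sqrd}: it quantifies how much the squared distance function on $M$ can fail to be Euclidean. Once this one-sided bound on $d_y^2$ is in hand, the rest of the argument is a direct and slightly simpler elliptic analogue of Lemma \ref{lem-u-e-alexandrov-p}, since there is no time variable to track and the inf-convolution is taken purely on $\Omega$.
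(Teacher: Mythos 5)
Your proposal is correct and follows essentially the same route the paper intends: the paper states this lemma without proof as the elliptic analogue of Lemma \ref{lem-u-e-alexandrov-p}, whose proof proceeds exactly as you describe — bounding the second difference quotient of $u_\ve$ by that of $\frac{1}{2\ve}d_{y_0}^2$, invoking Lemma \ref{lem-hess-dist-sqrd} and the monotonicity of $\tau\coth\tau$, then applying Lemma \ref{lem-semiconcave} and Theorem \ref{thm-AB}, with part (c) obtained by the Cabr\'e-style mollification with a partition of unity.
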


  \begin{prop}\label{lem-visc-u-u-e}  
Assume that $$\Sec
  \geq -\kappa\qquad\mbox{on $M,\qquad$ for $\kappa\geq0$.}$$
  Let $H\subset M$  be a bounded  open set    such that $  \overline H\subset\Omega.$   Let  $u\in C\left(\overline\Omega\right),$ and let $\omega$ denote  a modulus of continuity of $u$ on $\overline \Omega,$ which is nondecreasing  on $(0,+\infty)$ with $\omega(0+)=0.$  
  For $\ve>0,$ let $u_\ve$ be  the inf-convolution of $u$ with respect to $ \Omega.$  
Then, there exists $\ve_0>0$ depending only  on $||u||_{L^{\infty}\left(\overline\Omega\right)} , H,$ and $\Omega,$   such that   if $0<\ve<\ve_0,$  then the following statements  hold: let  $x_0\in \overline H,$  and let $y_0\in\overline\Omega$ satisfy  
$$u_\ve(x_0)=u(y_0)+\frac{1}{2\ve} d^2(y_0,x_0).$$
\begin{enumerate}[(a)]
\item We have that   $y_0\in\Omega,$ and there is a unique minimizing geodesic joining  $x_0$ to $y_0.$
\item If $(\zeta,A)\in \cJ^{2,-}u_\ve(x_0),$ 
then  we have 
$$y_0=\exp_{x_0}(-\ve\zeta).$$
\item If $(\zeta,A)\in \cJ^{2,-}u_\ve(x_0),$  then we have 
 $$ \left(\, L_{x_0,y_0}\zeta, \,L_{x_0,y_0}\circ A-{\color{black}2\kappa\,\omega\left(2\sqrt{\ve\,||u||_{L^\infty\left(\overline\Omega\right)} }\right)}\,  g_{y_0}\right)\,\in \cJ^{2,-}u(y_0),$$
where $L_{x_0,y_0}$ stands for the parallel transport along the unique minimizing geodesic joining $x_0$ to $y_0=\exp_{x_0}(-\ve\zeta).$ 
 \end{enumerate}
\end{prop}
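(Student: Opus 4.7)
The plan is to transplant the argument of Proposition \ref{lem-visc-u-u-e-p} to the time-independent setting by dropping every $\p_t$, $\sigma$, and $[T_0,T_2]$ occurrence. Since all three conclusions are purely spatial, no genuinely new geometry is required; the proof should be a careful translation of the parabolic one, with the sole advantage that we no longer have to track the $s_0$-coordinate of the minimizing point.

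For part (a), I would first use Lemma \ref{lem-visc-u-u-e-0}(c) to get $d(x_0,y_0)\le 2\sqrt{m\,\ve}$ with $m=\|u\|_{L^\infty(\overline\Omega)}$. Because $\overline\Omega$ is compact and $x\mapsto i_M(x)$ is continuous, $i_{\overline\Omega}:=\inf_{x\in\overline\Omega} i_M(x)>0$, so choosing
\[
\ve_0:=\tfrac{1}{8\,m}\min\left\{d(\overline H,\p\Omega)^2,\; i_{\overline\Omega}^{\,2}\right\}
\]
forces $y_0\in\Omega$ and $d(x_0,y_0)<\min\{i_M(x_0),i_M(y_0)\}$, yielding the unique minimizing geodesic.

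For part (b), I would mimic the variational inequality used in the parabolic proof. Writing $y_0=\exp_{x_0}X$ with $|X|=d(x_0,y_0)$, the definition of the subjet and the inf-convolution give, for each unit $\xi\in T_{x_0}M$ and small $r\in\R$,
\[
\tfrac{1}{2\ve}d^2(y_0,\exp_{x_0} r\xi)\ge \tfrac{1}{2\ve}d^2(y_0,x_0)+r\langle\zeta,\xi\rangle+\tfrac{r^2}{2}\langle A\cdot\xi,\xi\rangle+o(r^2).
\]
Using $d(y_0,\exp_{x_0}r\xi)\le d(y_0,x_0)+r$ on one side (for any $\xi$) and the reverse $d(y_0,\exp_{x_0}r X/|X|)\le |X|-r$ on the other, I obtain the two scalar inequalities $\langle -\ve\zeta,\xi\rangle\le|X|$ for all unit $\xi$ and $\langle -\ve\zeta,X/|X|\rangle=|X|$, which force $X=-\ve\zeta$ (the case $X=0$ giving $\zeta=0$ directly).

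For part (c), I would repeat the second-variation-of-energy computation verbatim. Let $\gamma(t)=\exp_{x_0}(-t\ve\zeta)$ be the unique minimizer from $x_0$ to $y_0$, and for $\nu\in T_{y_0}M$ with $|\nu|=1$ let $\nu(t):=L_{y_0,\gamma(t)}\nu$ be its parallel transport. Apply \eqref{eq-2nd-var-PT} to the variation $h(r,t)=\exp_{\gamma(t)}(r\nu(t))$ to obtain
\[
d^2(x_0,y_0)-d^2(\exp_{y_0}r\nu,\,\exp_{x_0}L_{y_0,x_0}r\nu)\ge -r^2\kappa\,d^2(x_0,y_0)+o(r^2),
\]
exactly as in \eqref{eq-2nd-var-p}; the bound on the sectional curvature and $|\nu(t)|=|\dot\gamma(t)|/d(x_0,y_0)$ are all that is used. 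Inserting this into the subjet inequality for $u_\ve$ applied to $y=\exp_{y_0}r\nu$, combined with $d^2(x_0,y_0)\le 2\ve\,\omega(2\sqrt{\ve m})$ from Lemma \ref{lem-visc-u-u-e-0}(c), produces
\[
u(\exp_{y_0}r\nu)\ge u(y_0)+r\langle L_{x_0,y_0}\zeta,\nu\rangle+\tfrac{r^2}{2}\langle (L_{x_0,y_0}\!\circ\! A)\cdot\nu,\nu\rangle-r^2\kappa\,\omega(2\sqrt{\ve m})+o(r^2),
\]
which is precisely the characterization of the claimed element of $\cJ^{2,-}u(y_0)$.

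Since the parabolic analogue is already in hand and the time coordinate drops out trivially, there is no real obstacle; the only points that require mild care are verifying that $\ve_0$ can be chosen uniformly using compactness of $\overline\Omega$, and confirming that the curvature correction in (c) reduces to the single term $2\kappa\omega(2\sqrt{\ve m})g_{y_0}$ once the redundant time-derivative bookkeeping is removed.
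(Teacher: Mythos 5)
Your proposal is correct and is essentially the proof the paper intends: the paper states the elliptic results of Section \ref{sec-EHI} without proof precisely because they follow by deleting the time variable from the arguments of Lemmas \ref{lem-visc-u-u-e-0-p}--\ref{lem-u-e-alexandrov-p} and Proposition \ref{lem-visc-u-u-e-p}, which is exactly what you do (the choice of $\ve_0$, the two-sided distance comparison forcing $X=-\ve\zeta$, and the second-variation estimate \eqref{eq-2nd-var-PT} combined with $d^2(x_0,y_0)\leq 2\ve\,\omega\left(2\sqrt{\ve m}\right)$ all match the parabolic proof with $\sigma=0$).
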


\begin{lemma}\label{prop-u-u-e-superj}
 Under the same assumption  as Proposition \ref{lem-visc-u-u-e}, we also assume that  $F$ satisfies \eqref{Hypo1}  and \eqref{Hypo2}. 
For ${\color{black}f\in C\left( \Omega \right)},$  let $u\in C\left(\overline\Omega\right)$ be a viscosity supersolution of $$F(D^2u)=f\quad \mbox{in $\Omega.$}$$  
If $0<\ve<\ve_0,$ then  the inf-convolution $u_\ve$ (with respect to $\Omega$) is a viscosity supersolution of 
  $$F(D^2u_\ve) =f_\ve\quad\mbox{on $H$},$$
  where 
  $\ve_0>0$ is the constant as in Proposition \ref{lem-visc-u-u-e},  and 
  $$f_{\ve}(x):= \sup_{\overline B_{2\sqrt{m\ve}}(x)}f+{\color{black}\omega_F}\left(2\sqrt{m\ve}\right)+2n\Lambda {\color{black}\kappa\,\omega}\left(2\sqrt{m \ve}\right);\quad m:=||u||_{L^{\infty}\left(\overline\Omega\right)}.$$  
 Moreover,  we have 
   $$F(D^2u_\ve) \leq f_\ve\quad\mbox{ a.e. in $H.$}$$
\end{lemma}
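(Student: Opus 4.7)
The proof will mirror the parabolic argument in Lemma \ref{prop-u-u-e-superj-p}, but with all time-related terms suppressed. The plan is to show the viscosity inequality by unfolding the definition of $\cJ^{2,-}u_\ve$ and transferring subjets of $u_\ve$ at a test point $x_0$ to subjets of $u$ at the corresponding contact point $y_0$, then exploit ellipticity and intrinsic continuity of $F$ to control the resulting error.

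More precisely, first I would fix $0<\ve<\ve_0$ (with $\ve_0$ from Proposition \ref{lem-visc-u-u-e}) and take any $\varphi\in C^2(H)$ such that $u_\ve-\varphi$ attains a local minimum at $x_0\in H$, so that $(\D\varphi(x_0),D^2\varphi(x_0))\in \cJ^{2,-}u_\ve(x_0)$. Let $y_0\in\overline\Omega$ realize the inf-convolution at $x_0$. By Proposition \ref{lem-visc-u-u-e}, we have $y_0=\exp_{x_0}(-\ve\D\varphi(x_0))\in \overline B_{2\sqrt{m\ve}}(x_0)\subset \Omega$ (with $m:=\|u\|_{L^\infty(\overline\Omega)}$), a unique minimizing geodesic joins $x_0$ to $y_0$, and
\[
\bigl(L_{x_0,y_0}\D\varphi(x_0),\,L_{x_0,y_0}\!\circ D^2\varphi(x_0)-2\kappa\,\omega\!\left(2\sqrt{m\ve}\right)g_{y_0}\bigr)\in \cJ^{2,-}u(y_0).
\]

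Since $u$ is a viscosity supersolution of $F(D^2u)=f$ in $\Omega$, this yields
\[
F\!\left(L_{x_0,y_0}\!\circ D^2\varphi(x_0)-2\kappa\,\omega\!\left(2\sqrt{m\ve}\right)g_{y_0}\right)\leq f(y_0).
\]
Applying the uniform ellipticity \eqref{Hypo1} (equivalently \eqref{Hypo1'}) to absorb the multiple of $g_{y_0}$ costs at most $2n\Lambda\kappa\,\omega(2\sqrt{m\ve})$, so $F(L_{x_0,y_0}\!\circ D^2\varphi(x_0))\leq f(y_0)+2n\Lambda\kappa\,\omega(2\sqrt{m\ve})$. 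Next the intrinsic uniform continuity hypothesis \eqref{Hypo2} gives
\[
F(D^2\varphi(x_0))-F(L_{x_0,y_0}\!\circ D^2\varphi(x_0))\leq \omega_F(d(x_0,y_0))\leq \omega_F\!\left(2\sqrt{m\ve}\right),
\]
where the use of \eqref{Hypo2} is legitimate because $d(x_0,y_0)<\min\{i_M(x_0),i_M(y_0)\}$ by Proposition \ref{lem-visc-u-u-e}(a) (after shrinking $\ve_0$ if necessary, just as in the parabolic case). Combining the two bounds gives $F(D^2\varphi(x_0))\leq f(y_0)+\omega_F(2\sqrt{m\ve})+2n\Lambda\kappa\,\omega(2\sqrt{m\ve})\leq f_\ve(x_0)$, proving $u_\ve$ is a viscosity supersolution of $F(D^2u_\ve)=f_\ve$ in $H$.

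For the pointwise a.e.\ statement, I would invoke Lemma \ref{lem-u-e-alexandrov}(a): $u_\ve$ is semi-concave in $\Omega$, hence by Aleksandrov-Bangert is twice differentiable almost everywhere with a second-order Taylor expansion. At any such point $x\in H$, the Taylor expansion places $(\D u_\ve(x),D^2u_\ve(x))$ simultaneously in $\cJ^{2,-}u_\ve(x)$ and $\cJ^{2,+}u_\ve(x)$ (via the local characterization of subjets), so the just-established viscosity inequality yields $F(D^2u_\ve(x))\leq f_\ve(x)$ a.e.\ in $H$. There is no genuine obstacle beyond this routine translation of the parabolic argument; the only mild care needed is to choose $\ve_0$ small enough that $y_0$ stays in $\Omega$ and inside the injectivity radius simultaneously, which is exactly what Proposition \ref{lem-visc-u-u-e} provides.
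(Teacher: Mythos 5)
Your proposal is correct and follows exactly the route the paper intends: the paper states this lemma without proof as the elliptic analogue of Lemma \ref{prop-u-u-e-superj-p}, and your argument is precisely that parabolic proof with the time variable suppressed — transfer the subjet via Proposition \ref{lem-visc-u-u-e}, absorb the $2\kappa\,\omega\,g_{y_0}$ correction using \eqref{Hypo1'} at cost $2n\Lambda\kappa\,\omega(2\sqrt{m\ve})$, apply \eqref{Hypo2}, and then use the Aleksandrov--Bangert expansion for the a.e.\ pointwise inequality. The only superfluous remark is the suggestion to shrink $\ve_0$ further for the injectivity-radius condition; Proposition \ref{lem-visc-u-u-e}(a) already guarantees $d(x_0,y_0)<\min\{i_M(x_0),i_M(y_0)\}$ for $\ve<\ve_0$.
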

In particular, Lemma \ref{prop-u-u-e-superj} holds for Pucci's extremal operators according to Remark \ref{rmk-AFS}.

The following proposition is quoted from the proof of  \cite[Proposition 4.1]{WZ},  which is a main ingredient in the proof of a priori Harnack estimate.

\begin{prop}\label{thm-WZ}
Assume that $$\Sec
  \geq -\kappa\quad\mbox{on $M,\quad$ for $ \kappa\geq0,$}$$ and $F$  satisfies \eqref{Hypo1} with $F(0)=0.$    Let $0<R\leq R_0$ and  $u  $ be a smooth function  satisfying  $F(D^2u)\leq f$ in $B_{2R}(x_0)\subset M.$ Then, there exist uniform constants $M_0>0, 0<\mu_0<1,$ and $0<\e_0<1,$ depending only on $n,\lambda,\Lambda$ and $\sqrt{\kappa}R_0,$ such that if
  for any $B_r(z_0)\subset B_{2R}(x_0),$
\begin{equation*}
u\geq0\quad\mbox{in $B_r(z_0),$} \quad
  \inf_{B_{r/2}(z_0)}u\leq 1,
  \end{equation*}
   and  $$\left(\fint_{B_{3r/4}(z_0)}|r^2f^+|^{n\theta}\right)^{\frac{1}{n\theta}} \leq \e_0;\quad f^+:=\max(f,0),$$
then we have
  \begin{equation*}
 \frac{\left|\left\{u\leq M_0\right\}\cap B_{r/8}(z_0)\right|}{|B_{r}(z_0)|}\geq \mu_0,
  \end{equation*}
  where $\theta:=1+\log_2\cosh\left(8\sqrt{\kappa}R_0\right).$
\end{prop}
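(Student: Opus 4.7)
The plan is to prove an elliptic analogue of Proposition \ref{lem-decay-est-1-step}, following \cite{WZ}. First I would reduce to $F = \cM^-$ via \eqref{Hypo1'}, so that $\cM^-(D^2 u) \leq f^+$. The strategy is then to combine a radial barrier with an elliptic ABP-type inequality based on the contact set $\cA_a$ of opening $a := 2/r^2$ with vertex set $\overline{B_r}(z_0)$, exploiting the Jacobian bound from Lemma \ref{lem-WZ-jac}.

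The first ingredient would be a radial barrier $v_0(x) = \varphi(d_{z_0}^2(x)/r^2)$ on $B_{3r/4}(z_0)$, constructed as the time-independent analogue of Lemma \ref{lem-barrier}. I would choose $\varphi$ so that $v_0 \geq 0$ on $B_{3r/4}(z_0) \setminus B_{r/4}(z_0)$, $v_0 \leq 0$ on $B_{r/2}(z_0)$, $|v_0| \leq C_0$, and
\[
r^2 \cM^+(D^2 v_0) + C' \sH(C_0 \sqrt{\kappa} R_0) \leq 0 \quad \text{on} \quad B_{3r/4}(z_0) \setminus B_{r/16}(z_0),
\]
using Lemma \ref{lem-hess-dist-sqrd} to control $D^2(d_{z_0}^2)$. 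The singularity of $v_0$ on $\Cut(z_0)$ would be handled by approximating $\psi v_0$ by smooth $w_k$ exactly as in Lemma \ref{lem-barrier-approx}.

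The second ingredient is the elliptic ABP applied to $w := u + v_0$. For any $y_0 \in B_r(z_0)$ there would be an interior contact point $\bar x \in B_{3r/4}(z_0)$ realizing $\inf_{B_{3r/4}(z_0)}(w + \tfrac{a}{2} d_{y_0}^2) \leq 0$, so $B_r(z_0) \subset \tilde\phi(\tilde\cA)$ for $\tilde\phi(x) = \exp_x(a^{-1} \nabla w(x))$ and $\tilde\cA \subset \{w \leq M_0\} \cap B_{3r/4}(z_0)$. The contact estimate $d(\bar x, y_0) \leq 2R_0$ yields $|\nabla w|/a \lesssim R_0$ on $\tilde\cA$, so the area formula, Lemma \ref{lem-WZ-jac}(b), and the Pucci estimate $\La w \leq n\cM^-(D^2 w)/\lambda + C \sH(C_0 \sqrt{\kappa} R_0)/r^2$ (the analogue of \eqref{eq-abp-la-pucci} obtained from Lemma \ref{lem-hess-dist-sqrd}) combine to give
\[
|B_r(z_0)| \leq C_1 \int_{\tilde\cA} \bigl(r^2 (\cM^-(D^2 w))^+ + C_2\bigr)^n.
\]
Using $\cM^-(D^2 w) \leq f^+$ on $\tilde\cA \setminus E_2$ and $\cM^-(D^2 w) \leq f^+ + C_0/r^2$ on $E_2 := \tilde\cA \cap B_{r/16}(z_0)$, then dividing by $|B_{3r/4}(z_0)|$, applying H\"older's inequality together with Lemma \ref{lem-weight-int-ellip} to upgrade the exponent from $n$ to $n\theta$, and using Bishop-Gromov (Theorem \ref{lem-bishop}) to relate the volumes of $B_r$, $B_{3r/4}$, and $B_{r/8}$, one absorbs the $f^+$ term when $\epsilon_0$ is small and concludes the density bound; the constant $M_0$ is then adjusted using $v_0 \geq -C_0$.

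The main obstacle is the barrier construction on a curved manifold: one must balance the required strong negativity of $\cM^+(D^2 v_0)$ against the $\sH(\sqrt{\kappa} d) \lesssim 1 + \sqrt{\kappa} R_0$ contribution from Lemma \ref{lem-hess-dist-sqrd}, and this, combined with the doubling constant from Bishop-Gromov, forces $M_0, \mu_0, \epsilon_0$ to depend on $\sqrt{\kappa} R_0$, which ultimately produces the growth rate of the Harnack constant noted in Remark \ref{rmk-Harnack-const}.
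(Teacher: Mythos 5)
First, a point of reference: the paper does not actually prove this proposition --- it is explicitly ``quoted from the proof of \cite[Proposition 4.1]{WZ}'', so you are reconstructing the Wang--Zhang argument rather than competing with a proof in the text. Your strategy is the correct one and is exactly the elliptic counterpart of what the paper does in the parabolic case (barrier as in Lemma \ref{lem-barrier}, ABP on the contact set with the Jacobian bound of Lemma \ref{lem-WZ-jac}, the Pucci--Laplacian comparison \eqref{eq-abp-la-pucci}, then H\"older, Lemma \ref{lem-weight-int-ellip} and Bishop--Gromov). The reduction to $\cM^-$, the localization of the constant term to $E_2$ via the barrier, and the source of the $\sqrt{\kappa}R_0$-dependence are all correctly identified.

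There is, however, a genuine gap in the geometric setup of the ABP step. You take the vertex set to be $\overline{B_r}(z_0)$ while the infimum of $w+\frac{a}{2}d_{y_0}^2$ is taken over the \emph{smaller} ball $B_{3r/4}(z_0)$. For a vertex $y_0\in B_r(z_0)\setminus B_{3r/4}(z_0)$, the quantity $\frac{a}{2}d_{y_0}^2$ is nearly zero at the point of $\partial B_{3r/4}(z_0)$ closest to $y_0$, where $w\geq 0$ gives only the value $\approx 0$, whereas the interior competitor in $B_{r/2}(z_0)$ only gives $w+\frac{a}{2}d_{y_0}^2\leq 1+\frac{a}{2}(3r/2)^2$; so the infimum can be attained on the boundary and no interior contact point need exist. (Your claim that the infimum is $\leq 0$ is also unjustified --- one only gets $\leq 1+O(1)$, and the whole point is to show the boundary values strictly exceed this.) Relatedly, the stated barrier conditions are inconsistent: $v_0\geq 0$ on $B_{3r/4}\setminus B_{r/4}$ and $v_0\leq 0$ on $B_{r/2}$ force $v_0\equiv 0$ on $B_{r/2}\setminus B_{r/4}$, where the required strict inequality $r^2\cM^+(D^2v_0)+C'\sH\leq 0$ then fails. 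The fix, as in \cite{WZ} and in Lemma \ref{lem-abp-type}, is to rebalance the radii: take the vertex set to be a small ball (comparable to the target ball $B_{r/8}(z_0)$) well inside the domain, choose the opening $a=c/r^2$ and the barrier's positive lower bound on the outer annulus large enough that $\inf_{\partial}(w+\frac{a}{2}d_y^2)$ strictly exceeds the interior minimum for every admissible vertex $y$, and make the two sign regions of the barrier disjoint. Then $|E|\leq\int_{\tilde\cA}\Jac\tilde\phi$ with $|E|\simeq|B_r(z_0)|$ by Bishop--Gromov, and the rest of your argument goes through.
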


\begin{lemma}\label{lem-key-EHI}
Assume that $$\Sec
  \geq -\kappa\quad\mbox{on $M,\quad$ for $\kappa\geq0$},$$
  and $F$  satisfies \eqref{Hypo1} with $F(0)=0.$ 
Let $0<R\leq R_0.$ For  $f\in C\left(B_{2R}(x_0)\right),$    let $u\in C\left(B_{2R}(x_0)\right) $ be  a   viscosity   supersolution of  
  $$F(D^2u)=f\quad\mbox{ in $B_{2R}(x_0).$}$$ Then, there exist uniform constants $M_0>0, 0<\mu_0<1,$ and $0<\e_0<1,$ depending only on $n,\lambda,\Lambda$ and $\sqrt{\kappa}R_0,$ such that if
   for any $B_r(z_0)\subset B_{2R}(x_0),$
\begin{equation*}
u\geq0\quad\mbox{in $B_r(z_0),$}\quad\mbox{and}\quad
  \inf_{B_{r/2}(z_0)}u\leq 1,
  \end{equation*}
then we have
  \begin{equation*}
 \frac{\left|\left\{u\leq M_0\right\}\cap B_{r/8}(z_0)\right|}{|B_{r}(z_0)|}\geq \mu_0,
  \end{equation*}
 provided that $$\left(\fint_{B_{2R}(x_0)}|R^2f^+|^{n\theta}\right)^{\frac{1}{n\theta}} \leq \e_0;\quad f^+:=\max(f,0),$$
 where $\theta:=1+\log_2\cosh\left(8\sqrt{\kappa}R_0\right).$\end{lemma}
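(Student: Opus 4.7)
The plan is to mimic, in the elliptic setting, the argument used for the parabolic counterpart Proposition \ref{lem-decay-est-1-step-viscosity}, combining the inf-convolution regularization from Section \ref{sec-supinf-conv} with the a priori estimate of Proposition \ref{thm-WZ}. By the uniform ellipticity \eqref{Hypo1'}, any viscosity supersolution of $F(D^2u)=f$ also satisfies $\cM^-(D^2u)\le f^+$ in the viscosity sense, so it is enough to prove the lemma when $F=\cM^-$ (which moreover satisfies \eqref{Hypo2} with $\omega_F\equiv0$).

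First I would choose bounded open sets $H\subset\overline H\subset\Omega\subset\overline\Omega\subset B_{2R}(x_0)$ with $B_r(z_0)\subset H$, and for small $\ve>0$ form the inf-convolution $u_\ve$ of $u$ with respect to $\Omega$. Applying Lemma \ref{prop-u-u-e-superj} one gets a function $f_\ve$ that converges uniformly to $f$ on $\overline H$ as $\ve\to0$ (here the intrinsic uniform continuity of $\cM^-$ and the continuity of $u$ are used to bound the correction terms $\omega_F(2\sqrt{m\ve})$ and $2n\Lambda\kappa\,\omega(2\sqrt{m\ve})$), and $u_\ve$ satisfies
\[
\cM^-(D^2u_\ve)\le f_\ve\quad\mbox{a.e. in }H.
\]
By Lemma \ref{lem-visc-u-u-e-0}, $u_\ve\uparrow u$ uniformly on $\overline\Omega$, so for any fixed $\delta>0$ and small enough $\ve$ we have $u_\ve+\delta\ge 0$ on $B_r(z_0)$ and $\inf_{B_{r/2}(z_0)}(u_\ve+\delta)\le 1+\delta$.

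Next, fix a small $\ve$ and invoke Lemma \ref{lem-u-e-alexandrov}, (c), to produce a smooth cutoff $\psi$ with $\psi\equiv1$ on $\overline H$ and a sequence of smooth functions $w_k\to\psi u_\ve$ uniformly with $D^2w_k\to D^2u_\ve$ a.e.\ on $H$. Thanks to the uniform bound $D^2 w_k\le C\,g$, the dominated convergence theorem yields
\[
\limsup_{k\to\infty}\left(\fint_{B_{3r/4}(z_0)}\bigl|r^2\{\cM^-(D^2w_k)\}^+\bigr|^{n\theta}\right)^{\!\frac{1}{n\theta}}\le \left(\fint_{B_{3r/4}(z_0)}|r^2f_\ve^+|^{n\theta}\right)^{\!\frac{1}{n\theta}},
\]
and the elliptic analogue of Lemma \ref{lem-weight-int-ellip} (i.e., the inequality \eqref{eq-weight-int-ellip}) together with the hypothesis on $f$ lets us make the right-hand side less than $\e_0$ by choosing $\e_\eta$ in the statement small enough depending only on $n,\lambda,\Lambda,\sqrt{\kappa}R_0$.

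With these ingredients in place, I would apply the a priori Proposition \ref{thm-WZ} to the normalized smooth function $(w_k+2\delta)/(1+4\delta)$ on $B_r(z_0)$ (a trivial rescaling preserves the hypotheses), obtaining
\[
\frac{|\{w_k+2\delta\le(1+4\delta)M_0\}\cap B_{r/8}(z_0)|}{|B_r(z_0)|}\ge\mu_0.
\]
Letting $k\to\infty$, then $\ve\to 0$, then $\delta\to 0$, and using the uniform convergence of $w_k$ and $u_\ve$ gives the desired density lower bound for $\{u\le M_0'\}\cap B_{r/8}(z_0)$ with a slightly enlarged constant $M_0'$ still depending only on $n,\lambda,\Lambda,\sqrt{\kappa}R_0$. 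The main obstacle is not any single step in isolation but the careful bookkeeping when passing from the a.e.\ inequality for $u_\ve$ to a genuine smooth supersolution via $w_k$: one must ensure that the $L^{n\theta}$-smallness of $f^+$ on the large ball $B_{2R}(x_0)$ is inherited by $(\cM^-(D^2w_k))^+$ on the small ball $B_{3r/4}(z_0)$ without the constant $\e_0$ degenerating, which is where the volume doubling property (Theorem \ref{lem-bishop}) and the weighted integral estimate \eqref{eq-weight-int-ellip} are essential.
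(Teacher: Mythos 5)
Your proposal is correct and follows essentially the same route as the paper: reduce to $\cM^-$ via \eqref{Hypo1'}, regularize by inf-convolution (Lemma \ref{prop-u-u-e-superj}), pass to the smooth approximants $w_k$ of Lemma \ref{lem-u-e-alexandrov}, apply Proposition \ref{thm-WZ} to $(w_k+2\delta)/(1+4\delta)$, and take limits in $k$, $\ve$, $\delta$, using \eqref{eq-weight-int-ellip} and volume doubling to keep the $L^{n\theta}$-smallness of $f^+$ uniform. The one adjustment needed is in your choice of domains: since $B_r(z_0)$ need not be compactly contained in $B_{2R}(x_0)$, you cannot always arrange $B_r(z_0)\subset H\subset\overline H\subset\Omega\subset\overline\Omega\subset B_{2R}(x_0)$; the paper instead takes $H=B_{3r/4}(z_0)$ and $\Omega=B_{7r/8}(z_0)$, which suffices because the differential inequality is only needed on $B_{3r/4}(z_0)$ and the cutoff $\psi$ makes $w_k$ nearly nonnegative on all of $B_r(z_0)$.
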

\begin{proof}
 It suffices to prove the proposition for $F=\cM^-$ from \eqref{Hypo1'}. 
Set $$\Omega:= B_{7r/8}(z_0),\quad\mbox{and}\quad H:=B_{3r/4}(z_0).$$ 
 We note that $u$ and $f $ belong to $C\left(\overline\Omega\right),$ and denote by
  $\omega$   the modulus of continuity of $u$ on $\overline\Omega,$ which is nondecreasing with $\omega(0+)=0.$ 
  
  For  small $\ve>0,$ let $u_\ve$ be  the inf-convolution  of $u$ with respect to  $\Omega.$    According to  Lemma \ref{prop-u-u-e-superj},  we   find $\ve_0>0$ such that 
  for $0<\ve<\ve_0,$ $u_\ve$ satisfies 
\begin{equation*}
\cM^-(D^2u_\ve) \leq f_\ve \quad\mbox{ \,a.e. in $\,\,   B_{3r/4}(z_0),$}
\end{equation*}
 where  $f_{\ve}$ is defined  as follows: 
  for $x\in B_{3r/4}(z_0),$
 $$f_{\ve}(x):= \sup_{\overline B_{2\sqrt{m\ve}}(x)  }f+2n\Lambda {\color{black}\kappa\,\omega}\left(2\sqrt{m \ve}\right)
 ;\quad m:=||u||_{L^{\infty}\left(\overline B_{7r/8}(z_0)\right)},$$
 and we recall that  $\cM^-$ is intrinsically uniformly continuous with respect to $x$ with $\omega_{\cM^-}\equiv0.$ 
Using  \eqref{eq-weight-int-ellip},  we have    that
  \begin{equation*}
  \left(\frac{3r}{4}\right)^2\left(\fint_{B_{3r/4}(z_0)}|f^+|^{n\theta}\right)^{\frac{1}{n\theta}} \leq 2\cdot 4R^2\left(\fint_{B_{2R}(x_0)}|f^+|^{n\theta}\right)^{\frac{1}{n\theta}} \leq 8\e_0;\quad\theta:= 1+\log_2\cosh\left(8\sqrt\kappa R_0\right),
  \end{equation*}
  and hence for small $\ve>0,$
  \begin{equation}\label{eq-f-av-vol-e}
\left(\frac{3r}{4}\right)^2\left(\fint_{B_{3r/4}(z_0)}\left|\left\{\cM^-(D^2u_\ve)\right\}^+\right|^{n\theta}\right)^{\frac{1}{n\theta}} \leq \left(\frac{3r}{4}\right)^2\left(\fint_{B_{3r/4}(z_0)}|f_\ve^+|^{n\theta}\right)^{\frac{1}{n\theta}} \leq 9\e_0 
\end{equation}
since $f_\ve$ converges uniformly to $f$ in     $B_{3r/4}(z_0).$ 
For a fixed $\delta>0,$  we may assume that  for small $\ve>0, $
 $$ u_\ve\geq-\delta  \quad\mbox{in}\quad B_{7r/8}(z_0),\quad\mbox{and}\quad\displaystyle\inf_{B_{r/2}(z_0)}  u_\ve\leq 1+\delta $$
since $u_\ve$ converges uniformly to $u$ in $B_{7r/8}(z_0)$ from  Lemma \ref{lem-visc-u-u-e-0}.

Now, we  fix a small $\ve>0.$
According to Lemma \ref{lem-u-e-alexandrov}, $(c),$ there is a smooth function $\psi$ such that $0\leq \psi\leq1$ on $M,$
$$\psi\equiv 1\quad\mbox{in $\overline B_{3r/4}(z_0),$}\quad\mbox{and}\quad {\supp\psi\subset B_{7r/8}(z_0)},$$ 
and  we approximate $\psi u_\ve$ by smooth functions $w_k$ on $M $ satisfying 
\begin{equation*} 
\left\{
\begin{array}{ll}
 w_k\to  \psi u_\ve\qquad &\mbox{uniformly in $M$  as $k\to+\infty,$}\\ 
 |\D w_k|\leq C \qquad &\mbox{in $M  $, }\\
     D^2 w_k\leq C g\qquad &\mbox{in $M $, }\\
 D^2 w_k \to D^2u_\ve \qquad&\mbox{a.e. in $B_{3r/4}(z_0)$ as $k\to+\infty,$}\\
 \end{array}\right. \qquad \qquad
\end{equation*}
where the constant $C>0$ is independent of $k$. 
For large $k,$ we may assume that
$$w_k\geq-2\delta\,\,\,\mbox{in $B_{r}(z_0) $}\quad\mbox{and}\quad\displaystyle\inf_{B_{r/2}(z_0)}w_k\leq 1+2\delta,$$ 
 and 
$$\left(\frac{3r}{4}\right)^2\left(\fint_{B_{3r/4}(z_0)}\left|\left\{\cM^-(D^2w_k)\right\}^+\right|^{n\theta}\right)^{\frac{1}{n\theta}} \leq   10\e_0,$$
 where we used  the dominated convergence theorem to obtain the last estimate from \eqref{eq-f-av-vol-e}. 
By selecting $\e_0>0$ small enough,  we apply Theorem \ref{thm-WZ} to $\displaystyle \frac{w_k+2\delta}{1+4\delta} $   (for large $k$) to    obtain that 
  \begin{equation*}
\frac{\left|\left\{w_k+2\delta\leq (1+4\delta)M_0\right\}\cap B_{r/8}(z_0)\right|}{|B_{r}(z_0)|}\geq \mu_0.
  \end{equation*} 
By letting $k\to+\infty, $ we have 
  \begin{equation*}
\frac{\left|\left\{u_\ve+\delta\leq (1+4\delta)M_0\right\}\cap B_{r/8}(z_0)\right|}{|B_{r}(z_0)|}\geq \mu_0.
  \end{equation*} 
Since $u_\ve$ converges uniformly to $u$ in $B_{7r/8}(z_0),$   
  we let $ \ve\to0$ and  $\delta\to0$  to conclude that 
  \begin{equation*}
\frac{\left|\left\{u \leq  M_0\right\}\cap B_{r/8}(z_0)\right|}{|B_{r}(z_0)|}\geq \mu_0
  \end{equation*}  
   which finishes the proof.
\end{proof}

 \textbf{Proof of Theorems  \ref{thm-EHI} and \ref{thm-weak-EHI}  }
 Theorems  \ref{thm-EHI} and \ref{thm-weak-EHI}   follow from  Lemma \ref{lem-key-EHI} and 
   a standard covering argument 
employing  Bishop and Gromov's Theorem \ref{lem-bishop}    (see \cite{Ca} for instance).
\qed

  {\bf Acknowledgement} 
  Ki-Ahm Lee was supported by Basic Science Research Pro- gram through the National Research Foundation of Korea(NRF) grant funded by the Korea government(MEST)(2010-0001985).
Ki-Ahm Lee also hold a joint appointment with the Research Institute of Mathematics of Seoul National University. 


\end{document}